\documentclass[twoside,11pt,a4paper]{amsart}
\usepackage[utf8]{inputenc}

\usepackage{amsfonts,amsmath,amssymb,amsthm}
\usepackage{color}
\usepackage{graphicx}
\usepackage{latexsym}

\newcommand{\red}{\color{red}}

\usepackage[colorlinks,
linkcolor=red,
citecolor=blue
]{hyperref}

\newcommand{\R}{\mathbb{R}}

\newcommand{\dive}{\operatorname{div}}
\newcommand{\grad}{\nabla}

\def\eps{\varepsilon}

\def\pt{\partial_t}

\newtheorem{theorem}{Theorem}[section]
\newtheorem{remark}{Remark}[section]

\newtheorem{lemma}{Lemma}[section]

\numberwithin{equation}{section}

\begin{document}

\title[Large solutions for the Euler--Poisson system]{\bf 
Global relaxation limit for the one--fluid Euler--Poisson system with large smooth data}

\author{Yue-Jun Peng, Ling-Yun Shou, and Jiang Xu}
\date{}

\keywords{Euler--Poisson system; global classical solutions; large data; relaxation limit; large-time behavior}

\subjclass[2020]{35Q35, 76N10, 35B40, 35B65}

\begin{abstract}
Whether the multi-dimensional Euler--Poisson system admits global smooth solutions remains a  challenging open problem. In this paper, we construct a class of large--data global smooth solutions to the one--fluid Euler--Poisson system in $\mathbb{R}^d$ ($1\le d\le 5$) by using the relaxation dissipation mechanism.
Precisely, assuming that the initial density is far from vacuum and $\varepsilon E_0$ is sufficiently small, where $E_0$ denotes the initial energy and $\varepsilon$ is the relaxation time, we establish the global well--posedness of smooth solutions to the Cauchy problem. In particular, the size of the initial perturbation may be arbitrarily large, provided that the relaxation time is sufficiently small. Furthermore, we introduce an effective
unknown motivated by Darcy's law to derive quantitative error estimates at the rate $\mathcal O(e^{-\lambda t}\eps)$ between the rescaled Euler--Poisson system and the limiting  drift--diffusion system for ill--prepared data. The new ingredient lies in  developing the maximum principle for the nonlinear drift--diffusion system
with nonlocal effect, which leads to the large--data global existence.

\end{abstract}
\maketitle
\section{Introduction}
\vspace{2mm}

Plasma is widely present in nature and accounts for most of the observable matter in the universe, from stars and intergalactic space to nebulae and neon signs. In the classical two-fluid model describing plasma dynamics, the self-consistent electrostatic Newtonian potential satisfies the Poisson equation for the repulsive Coulomb interaction. When one species is regarded as a fixed neutralizing background, the system reduces to the one-fluid Euler--Poisson model \cite{c1,J}. It also arises in semiconductor theory to describe the macroscopic transport of charge carriers driven by an electric field \cite{MR1}.

In this paper, we consider the multi-dimensional one-fluid {\emph{Euler--Poisson system}} with friction in several space dimensions:
\begin{equation}\label{Euler}
\left\{
\begin{aligned}
  &\pt\rho+\dive q=0,\\
  &\pt q+\dive\Big(\dfrac{q\otimes q}{\rho}\Big)+\grad p(\rho)
=-\rho\grad\phi-\dfrac{q}{\eps},\\
  &-\Delta\phi=\rho-\bar{\rho},
\end{aligned}\right.
\end{equation}
with the initial condition
\begin{equation}\label{Eulerd}
(\rho, q)(0,x)=\bigl(\rho_0,q_0\bigr)(x),
\end{equation}
for $t>0$ and $x=(x_1,x_2,\cdots,x_d)\in\R^d$.
Here, $\eps\in(0,1]$ is the relaxation time, $\rho=\rho(t,x)>0$ and $q=q(t,x)\in\mathbb R^d$ denote the density and momentum of the charged particles, respectively, $p=p(\rho)$ is the pressure, and $\phi=\phi(t,x)$ is the electrostatic potential generated by the deviation of the charge density from a constant background state $\bar{\rho}>0$. Without loss of generality, we normalize the background density $\bar{\rho}=1$ throughout this paper.

In the following, we recall analytical results in the literature on the Euler--Poisson system,
which can be divided into two directions.

\vspace{3mm}

\noindent
{\textbf{Well-posedness.}} The Euler--Poisson system belongs to the general class of hyperbolic conservation laws, for which the local well-posedness theory of classical solutions has been well established; see \cite{Dafermos1,11}. On the other hand, for large amplitudes, classical solutions to the Euler--Poisson system without dissipation generally develop singularities in finite time; refer to, for instance, \cite{guoblow,7,MP,P,16} and the references therein.  In one dimension, the critical threshold phenomenon for global regularity versus finite-time blow-up was studied by Tadmor and Wei \cite{TW1} (see also Wei, Tadmor and Bae \cite{WTB}). We also refer to the existence of global weak solutions in one dimension \cite{prv} and high dimensions with spherical symmetry data \cite{CHWY}.

In contrast to the large-data results described above, the situation is markedly different for small perturbations of the constant equilibrium state $(1,0)$. In the presence of the velocity relaxation term, \eqref{Euler}  can be viewed as a partially dissipative system. Owing to the strict monotonicity of the pressure function $p$, the partial dissipation can be transferred to the density variable through the Euler--Poisson coupling. Combined with classical energy estimates, these mechanisms lead to the global existence of smooth solutions for sufficiently small perturbations; see, for example, \cite{A1,HMW,LMM}.

\vspace{3mm}

\noindent
{\textbf{Relaxation limit.}} Relaxation phenomena occur in a wide variety of physical situations, such as modeling blood flow with friction forces, non-equilibrium gas dynamics, kinetic theory, traffic flows, and more (see \cite{Dafermos1,W1}). These phenomena describe situations in which a physical system is perturbed away from a stable equilibrium state. In this case, systems are described by a set of equations
where the source terms contain time-relaxation parameters. We refer to \cite{liu1,N1} and the references therein for the study of relaxation limits from first-order hyperbolic systems to hyperbolic equilibrium equations. However, if a
slow time scaling is used, the solutions are expected to relax toward diffusion equilibrium.

Relaxation limits for damped Euler systems have attracted sustained attention over the past decades.  The results in one spatial dimension were investigated by  Marcati and Milani~\cite{12} and Marcati and Rubino~\cite{M1}. For the isothermal case, Junca and Rascle~\cite{5} established convergence to the heat equation for large $BV$ data away from vacuum.  In several dimensions, the weak relaxation limit for the damped Euler system to the porous medium equation has been proved in \cite{1,10,13,XuWang}. Concerning the explicit convergence rates, we refer to Li, Peng and Zhao \cite{9} in one dimension. 
In higher dimensions, global convergence rates in the ill-prepared setting were established in Sobolev spaces by Crin-Barat, Peng and Shou \cite{CBPS}, and in critical spaces by Crin-Barat and Danchin \cite{c3}. Finally, we also mention the work of Peng \cite{pengJFA} on this limit for large smooth solutions of the isothermal Euler equations.

For strong solutions, we analyze the diffusion limit for \eqref{Euler} under the scaling
\begin{align}\label{scaling}
\bigl(\rho^\eps,q^\eps,\phi^\eps\bigr)(t,x)
:=\left(\rho,\dfrac{1}{\eps}q,\phi\right)\left(\dfrac{t}{\eps},x\right).
\end{align}
Then the diffusively rescaled Euler--Poisson system, together with the
initial condition \eqref{Eulerd}, becomes
\begin{equation}\label{F1.2}
\left\{
\begin{aligned}
  &\pt\rho^\eps+\dive q^\eps=0,\\
  &\eps^2 \pt q^\eps+\eps^2\dive \Big(\dfrac{q^\eps\otimes q^\eps}{\rho^\eps} \Big)
+\grad p(\rho^\eps)=-\rho^\eps\grad\phi^\eps-q^\eps,\\
  &-\Delta\phi^\eps=\rho^\eps-1,\\
  &(\rho^\eps,q^\eps)(0,x)=\left( \rho_0,\frac{q_0}{\eps}\right)(x).
\end{aligned}\right.
\end{equation}

Formally, we denote
\[
\bigl(\rho^*,q^*,\phi^*\bigr)
:=\lim_{\eps\to 0}\bigl(\rho^\eps,q^\eps,\phi^\eps\bigr),
\]
and set $\rho_0^*=\rho_0$. As $\eps\to 0$, we expect that 
\begin{equation}\label{F1.5}
\left\{
\begin{aligned}
   &\pt\rho^*+\dive q^*=0,\\
   &\grad p(\rho^*)=-\rho^*\grad\phi^*-q^*\,\,(\text{Darcy's law}),\\
   &-\Delta\phi^*=\rho^*-1,
\end{aligned}\right.
\end{equation}
which yields a {\emph{drift--diffusion system}} together with an initial condition:
\begin{equation}\label{F1.6}
\left\{
\begin{aligned}
  &\pt\rho^*-\Delta p(\rho^*)=\dive(\rho^*\grad\phi^*),\\
  &-\Delta\phi^*=\rho^*-1,\\
  &\rho^*(0,x)=\rho_0^*(x).
\end{aligned}\right.
\end{equation}
System \eqref{F1.6} can be decoupled into the equation of $\rho^*$:
\begin{equation}\label{DDp}
\left\{
\begin{aligned}
&\pt\rho^*-\Delta p(\rho^*)=\dive\bigl(\rho^*\grad(-\Delta)^{-1}(\rho^*-1)\bigr),\\
&\rho^*(0,x)=\rho_0^*(x),
\end{aligned}
\right.
\end{equation}
and the equation of the electric field:
\begin{align}\label{nablaPhi}
\grad\phi^*=\grad(-\Delta)^{-1}(\rho^*-1).
\end{align}

The zero-relaxation limit of Euler--Poisson type systems toward drift--diffusion models has been extensively studied in the literature. Early contributions in one dimension include the works of Marcati and Natalini \cite{MN}, Lattanzio and Marcati \cite{LM1}, J\"ungel and Peng \cite{JP1,JP2} and Al\'i, Bini and Ri \cite{ABR1}. Relative entropy methods have recently been developed to derive the convergence rates on finite time intervals; see, for instance, Lattanzio and Tzavaras \cite{LT1}. For classical solutions, the local-in-time convergence and initial layer analysis of classical solutions have been justified by Yong \cite{Y1} and Hajjej and Peng \cite{HP}. In the perturbative regime near constant equilibria, Xu \cite{XuSIAM} and Peng \cite{PEP1} independently proved uniformly global classical solutions together with global weak convergence of the relaxation limit for Euler--Poisson systems. More recently, Li, Peng and Zhao \cite{LPZ1} derived global strong convergence for smooth  solutions in periodic domains with explicit rates. 

\vspace{3mm}

\noindent
{\textbf{Our contributions.}} To the best of our knowledge, in several space dimensions all available results on global classical solutions and the relaxation limit for the Euler--Poisson system are restricted to the setting of small initial perturbations. 
\emph{It remains a challenging problem whether the Euler--Poisson system admits global well-posedness and the corresponding relaxation limit toward the drift--diffusion equation for large initial data.}

Our main goal is to establish the global existence of smooth solutions to the Euler--Poisson system for a class of large initial data. We focus on the isothermal pressure law:
\begin{equation}\label{F1.1}
p(\rho)=a^2\rho,
\end{equation}
where $a>0$ denotes the sound speed. 
This choice is motivated by the classical work of Nishida and Smoller \cite{Ni68, NS} on the one-dimensional Cauchy problem for the isentropic Euler equations and the recent progress \cite{pengJFA} on the relaxation limit from the Euler equations to the linear heat equation. Compared with the Euler equations without the Poisson equation, the limiting  system \eqref{F1.6} or 
\eqref{DDp}-\eqref{nablaPhi} remains {\emph{nonlinear and nonlocal}}. This feature requires new techniques to handle the difficulties caused by the nonlinear effects. On the other hand, our work further reveals the nonlinear stabilizing mechanisms generated by the Poisson coupling in the relaxation limit. 

More precisely, we define the initial electric field
$\nabla\phi_0:=\nabla(-\Delta)^{-1}(\rho_0-1)$ and the initial energy associated with \eqref{Euler}--\eqref{Eulerd} by
\begin{align}
E_0:=\|\rho_0-1\|_{H^m}+\|q_0\|_{H^m}+\|\nabla\phi_0\|_{L^2},\quad H^m:=H^m(\R^d).
\end{align}
We can prove that the Cauchy problem \eqref{Euler}--\eqref{Eulerd} admits a unique global classical solution, provided that $E_0\eps$ is bounded by a sufficiently small constant independent of $\eps$. Indeed, that condition covers both the case of small perturbations where $E_0$ itself is sufficiently small, and the case of large data where $E_0$ may be arbitrarily large as long as the relaxation parameter $\eps$ is sufficiently small. 



\begin{theorem}\label{theorem1.1}
Let $1\le d\le 5$, $m>\frac{d}{2}+1$ be an integer and \eqref{F1.1} hold. Assume that $(\rho_0, q_0,\nabla\phi_0)$ with $\nabla\phi_0:=\nabla(-\Delta)^{-1}(\rho_0-1)$ fulfills
\begin{align}\label{1.11}
   & (\rho_0-1, q_0)\in H^m,\quad \nabla\phi_0\in L^2,\quad \rho_1\leq \rho_0(x)\leq \rho_2,\quad x\in \mathbb{R}^d,
\end{align}
for two positive constants $\rho_1\leq \rho_2$.  There exists a positive constant $\delta_0$ depending only on $\rho_1, \rho_2$, $d$, $a$ and $E_0$ such that if
\begin{equation}
\begin{aligned}
E_0\eps \leq \delta_0,\label{1.12}
  \end{aligned}
\end{equation}  
then the Euler--Poisson system \eqref{Euler}--\eqref{Eulerd} has a unique global classical solution $(\rho,q,\phi)$ satisfying
\begin{align}
&(\rho-1,q)\in C(\mathbb{R}_+;H^{m})\cap L^2(\mathbb{R}_+;H^{m}),\quad \nabla\phi\in C(\mathbb{R}_+;L^2)\cap L^2(\mathbb{R}_+;L^2),\\
&0<\rho_1-C_0 E_0 \eps\leq \rho(t,x)\leq \rho_2+C_0 E_0 \eps,\quad (t,x)\in \mathbb{R}_+\times\mathbb{R}^d,\label{lowupper0}
\end{align}
and the exponential stability
\begin{equation}\label{exp:EP}
\begin{aligned}
&\quad \sup_{t\in\mathbb{R}_+}e^{\lambda_0 \eps t} \big(\|\rho(t)-1\|_{H^m}^2+\|q(t)\|_{H^m}^2+\|\nabla\phi(t)\|_{L^{2}}^2\big)\\
&+\int_0^\infty e^{\lambda_0 \eps t} \big( \eps \|\rho(t)-1\|_{H^m}^2+\frac{1}{\eps}\|q(t)\|_{H^m}^2+\eps \|\nabla\phi(t)\|_{L^2}^2\big)\,dt\leq C_0 E_0^2.
\end{aligned}
\end{equation}
Here, $C_0$ and $\lambda_0 >0$ are constants independent of $\eps$. 
\end{theorem}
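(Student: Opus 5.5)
We work with the diffusively rescaled system \eqref{F1.2}. The time $t/\eps$ turns the decay rate $e^{\lambda_0\eps t}$ into $e^{\lambda_0 t}$, and the dissipation weights $\eps,\frac1\eps$ become $O(1)$ weights on $\rho^\eps-1$ and $q^\eps$. The argument is a continuity (bootstrap) argument built on local well-posedness of the symmetrizable hyperbolic system \cite{Dafermos1,11}. The key point is that the data are large, so we cannot linearize around $(1,0)$. Instead we compare $\rho^\eps$ with the solution $\rho^*$ of the drift--diffusion problem \eqref{DDp} that has the same initial density $\rho_0$.

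\textbf{Step 1: the limit system.} First we establish global well-posedness for \eqref{DDp} with large $H^m$ data and a maximum principle. With $p=a^2\rho$, the equation reads
\[
\pt\rho^*-a^2\Delta\rho^*-\grad\phi^*\cdot\grad\rho^*=-\rho^*(\rho^*-1).
\]
Comparison with the ODE $\dot y=-y(y-1)$ shows $\min(\rho_1,1)\le\rho^*\le\max(\rho_2,1)$, so the bounds $\rho_1\le\rho^*\le\rho_2$ are preserved. The nonlocal drift is harmless because it enters only as a transport term. Using this $L^\infty$ bound, we then derive $H^m$ energy estimates with exponential decay $e^{-\lambda t}$ and constants depending on $\rho_1,\rho_2,E_0$. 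The source $-\rho^*(\rho^*-1)$ is damping, since $\int(\rho^*-1)^2\rho^*$ is coercive once $\rho^*\ge\rho_1$. We control $\grad\phi^*$ in $L^2$ by $\|\rho^*-1\|_{\dot H^{-1}}$ through the entropy $\int\big(a^2(\rho\ln\rho-\rho+1)+\tfrac12|\grad\phi|^2\big)$. The restriction $d\le5$ enters in the Sobolev and Hardy--Littlewood--Sobolev estimates for $\grad(-\Delta)^{-1}$ acting on $L^2\cap L^\infty$ quantities.

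\textbf{Step 2: the error.} Next we introduce the effective unknown suggested by Darcy's law,
\[
z^\eps:=q^\eps+a^2\grad\rho^\eps+\rho^\eps\grad\phi^\eps=-\eps^2\big(\pt q^\eps+\dive(q^\eps\otimes q^\eps/\rho^\eps)\big).
\]
With this unknown, $\pt\rho^\eps-a^2\Delta\rho^\eps-\dive(\rho^\eps\grad\phi^\eps)=-\dive z^\eps$. So $\rho^\eps$ solves the drift--diffusion equation up to a source of size $O(\eps)$ in suitable norms. The difference $\delta\rho=\rho^\eps-\rho^*$ satisfies a linearized parabolic equation about $\rho^*$ forced by $\dive z^\eps$. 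Weighted energy estimates, using the decay of $\rho^*$ from Step 1, should give
\[
\|\delta\rho(t)\|_{L^\infty}\lesssim\|\delta\rho(t)\|_{H^{m-1}}\lesssim e^{-\lambda t}\eps E_0.
\]
This yields \eqref{lowupper0}.

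\textbf{Step 3: uniform energy estimates.} We close the bootstrap with energy estimates for the full system using the symmetrizer with weight $a^2/\rho$. Under the a priori assumption $\frac{\rho_1}{2}\le\rho^\eps\le2\rho_2$ and $\sup_t\|(\rho^\eps-1,\eps q^\eps)\|_{H^m}\le 2C_0E_0$, we perform the standard hypocoercive combination. This combination consists of the $H^m$ energy, the cross term $\eps^2\langle q^\eps,\grad\rho^\eps\rangle$ at each derivative level, and the Poisson identity $\langle\grad\phi^\eps,\grad\rho^\eps\rangle=\|\rho^\eps-1\|_{L^2}^2$. It gives a Lyapunov inequality
\[
\tfrac{d}{dt}\mathcal E+\lambda\mathcal D\le C\eps\|q^\eps\|_{W^{1,\infty}}\mathcal E+\cdots.
\]
The large-data difficulty is that nonlinear terms such as $\eps^2 q\cdot\grad q$ carry factors of $\|\eps q^\eps\|_{W^{1,\infty}}\lesssim\eps E_0$ after scaling, so they are absorbed once $\eps E_0\le\delta_0$. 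The remaining terms involve $\rho^\eps$ only through its pointwise bounds and its derivatives. These derivatives are large but decay by Steps 1--2. We handle them with Gronwall using $\int_0^\infty\|\grad\rho^\eps\|_{L^\infty}dt\lesssim C(E_0)$, which is why $\delta_0$ depends on $E_0$.

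I expect the main obstacle to be Step 3 together with its coupling to Step 2. The $L^\infty$ closeness of $\rho^\eps$ to $\rho^*$ must be obtained without smallness of $E_0$, and the error estimate has to use only quantities already controlled by the bootstrap. I would handle this by proving the error estimate in $H^{m-1}$ with the forcing $z^\eps$ measured by $\eps\,\mathcal D^{1/2}$, so that the two estimates close simultaneously. Uniqueness then follows from the local theory, and undoing the scaling gives \eqref{exp:EP}.
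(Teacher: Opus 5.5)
Your Step~1 (the comparison-principle proof of $\rho_1\le\rho^*\le\rho_2$ is a fine substitute for the truncation argument of Lemma~\ref{L2.1}) and your overall architecture match the paper: a Darcy-type unknown, transfer of the density bounds through $\rho^\eps-\rho^*$, and a bootstrap with $\delta_0$ depending on $E_0$. The error step, however, fails as you set it up, because the data are ill-prepared. Since $q^\eps(0)=q_0/\eps$, your unknown starts at $z^\eps(0)=\eps^{-1}q_0+a^2\grad\rho_0+\rho_0\grad\phi_0$ and relaxes on the time scale $\eps^2$. So already $\int_0^{\eps^2}\|z^\eps\|_{H^{m-1}}^2\,dt$ is of order $\|q_0\|_{H^{m-1}}^2$, whereas $\eps^2\int_0^\infty\mathcal D\,dt=O(\eps^2)$ under your bootstrap; the forcing cannot be measured by $\eps\mathcal D^{1/2}$. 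Pairing $\dive z^\eps$ with $\delta\rho$ and absorbing into the parabolic dissipation then only yields $\|\delta\rho\|_{H^{m-1}}\lesssim\|q_0\|_{H^{m-1}}$. That is not small for large data, so the lower bound on $\rho$, which is the heart of the argument, is lost. The missing idea is to peel off the initial layer. Let $\eps^2\pt Z_L^\eps+Z_L^\eps=0$ with $Z_L^\eps(0)=z^\eps(0)$. Then the remainder $Z^\eps=z^\eps-Z_L^\eps$ has zero data and solves the strongly damped equation \eqref{eq:Z-eq}, which gives $\int\|Z^\eps\|_{H^{m-1}}^2\,dt=O(\eps^2)$ (Lemma~\ref{lem2.5}). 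The regular part $e^{-t/\eps^2}(a^2\grad\rho_0+\rho_0\grad\phi_0)$ of the layer is $O(\eps)$ in $L^2(\R_+;H^{m-1})$. The singular part $q_L^\eps=e^{-t/\eps^2}q_0/\eps$ is only $O(1)$ in $L^2(\R_+;H^m)$, but it is $O(\eps)$ in $L^1(\R_+;H^m)$, and it is paired with $\sup_t\|\delta\rho\|_{H^{m-1}}$ rather than with the dissipation (Lemma~\ref{lemma4555}). Integrating $\eps^2\pt q^\eps$ by parts in time, using $\delta\rho(0)=0$, is another way out; an $L^2$-in-time bound on $z^\eps$ is not.

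Step~3 also misplaces the large-data difficulty. The Poisson nonlinearity $(\rho-1)\grad\phi$ is nonlocal, so it is not controlled by pointwise bounds and derivatives of $\rho$. In the $H^k$ estimate it produces $C\eps\|\grad\phi\|_{L^\infty}^2\|\rho-1\|_{H^k}^2$ (original time), i.e.\ the Gr\"onwall factor $\exp\big(C\eps\int_0^t\|\grad\phi\|_{L^\infty}^2\,dt'\big)$ of Lemma~\ref{lem:Hk-refined}, and a bound on $\int\|\grad\rho^\eps\|_{L^\infty}\,dt$ does not touch it. The paper's key estimate is $\eps\int_0^T\|\grad\phi\|_{L^\infty}^2\,dt\le C(1+E_0^2)$. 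It comes from two ingredients:
the Gagliardo--Nirenberg bound $\|\grad\phi\|_{L^\infty}^2\lesssim\|\grad\phi\|_{L^2}^2+\|\rho-1\|_{\dot H^2}^2$; and Gr\"onwall-free $L^2$ and $\dot H^1$ dissipation bounds for $\rho$, derived from the drift--diffusion form \eqref{4.3} of the Euler--Poisson mass equation, where the pointwise bounds on $\rho$ supply coercivity (Lemma~\ref{lemma4.4}).
That Gagliardo--Nirenberg step, together with $H^3\hookrightarrow L^\infty$ in Lemma~\ref{L2.4}, is where $d\le5$ enters, not HLS. The same lemma then gives $\eps\int\|\rho-1\|_{H^m}^2\,dt$. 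This is needed for the top-order term $\eps\int\|n\|_{H^m}^4\,dt$ and for the factor $\exp\big(C\eps\int\|\grad\rho\|_{H^{m-1}}^2\,dt\big)$ in the paper's error estimate. Your alternative of extracting these bounds from $\rho^*$ plus the error could work, but only under two conditions. First, the error equation must be linearized around $\rho^*$ alone, treating $\dive(\delta\rho\,\grad\Lambda^{-2}\delta\rho)$ as a quadratic term under a bootstrap on $\delta\rho$, so that no derivatives of $\rho^\eps$ enter the Gr\"onwall factor. Second, the error must also be controlled in $\dot H^{-1}$, so that $\grad\phi^\eps=\grad\phi^*+\grad\Lambda^{-2}\delta\rho$ is bounded in $L^2_tL^\infty$. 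Saying that ``the two estimates close simultaneously'' leaves exactly this unaddressed.
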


\begin{remark}
Some comments on Theorem \ref{theorem1.1} are in order.
\begin{itemize}
\item Condition \eqref{1.12} allows the assumption that $E_0\ll 1$, which recovers the small-perturbation global well-posedness result, see for example \cite{PEP1, XuSIAM}. More importantly, \eqref{1.12}  remains valid for arbitrarily large initial data provided that the relaxation parameter $\eps$ is sufficiently small, which is a surprising result for the Euler--Poisson system \eqref{Euler}--\eqref{Eulerd}. 


\item If $(\rho^\eps, q^\eps, \phi^\eps)$
is given by the scaling \eqref{scaling}, then we
get the following uniform exponential decay estimate:
\begin{equation}\label{exp:EP1}
\begin{aligned}
&\quad \sup_{t\in \mathbb{R}_+} e^{\lambda_0 t} \big(\|\rho^\eps(t)-1\|_{H^m}^2+\eps^2\|q^\eps(t)\|_{H^m}^2+\|\nabla\phi^\eps(t)\|_{L^{2}}^2\big)\\
&+\int_0^\infty e^{\lambda_0 t} \big( \|\rho^\eps(t)-1\|_{H^m}^2+\|q^\eps(t)\|_{H^m}^2+\|\nabla\phi^\eps(t)\|_{L^2}^2\big)\,dt\leq  C_0 E_0^2,
\end{aligned}
\end{equation}
which is different from the usual situation for partially dissipative hyperbolic systems in the whole space, where the classical Shizuta--Kawashima theory only yields algebraic decay rates, cf. \cite{SK}.
In fact, the Poisson coupling has an enhanced stabilizing effect, which leads to the exponential decay to equilibrium of classical solutions.


\item The regularity assumption on $\nabla\phi_0$ has been weakened in comparison with previous results. Indeed, it follows from the standard Fourier multiplier theorem (see \cite{stein}) that  $\|\nabla(-\Delta)^{-1}f\|_{\dot H^{s+1}}\sim \|f\|_{\dot H^s}$  for $s\in\mathbb R$. Consequently, $\rho_0-1\in H^m$ and $\nabla\phi_0\in L^2$ in \eqref{1.11} are equivalent to $\nabla\phi_0\in H^{m+1}$.

\item For large initial data, the results in Theorem \ref{theorem1.1} are based on the global well-posedness of classical solutions for the drift-diffusion system \eqref{F1.6} and \eqref{F1.1}, which is obtained under the restriction condition on the space dimension $d\le 5$. So far, we don't know whether this condition could be removed in Theorem \ref{theorem1.1}.

\end{itemize}
\end{remark}


Our second result concerns asymptotic stability of those global large solutions to the Euler--Poisson system \eqref{Euler}--\eqref{Eulerd}. For ill-prepared initial data, we rigorously justify the relaxation limit from \eqref{Euler}--\eqref{Eulerd} to \eqref{F1.6} and obtain the improved convergence rate $\eps e^{-\lambda t}$.



\begin{theorem}\label{theorem1.3}
Let $(\rho,q,\phi)$ and $(\rho^*,\phi^*)$ be the solutions to the Euler--Poisson system \eqref{Euler}--\eqref{Eulerd} and to the drift--diffusion system \eqref{F1.6} addressed in
Theorems \ref{theorem1.1} and \ref{theorem1.2}, respectively, and $q^*$ be given by
Darcy's law $q^*=-\nabla p(\rho^*)-\rho^*\nabla\phi^*$. Let $(\rho^\eps, q^\eps, \phi^\eps)$
be given by the scaling \eqref{scaling}. 
Then we have the following error estimate
 \begin{equation}
 \begin{aligned}\label{error2}
 & \sup_{t\in\mathbb{R}_+}e^{\lambda_1 t}\|(\rho^\eps-\rho^*)(t)\|_{H^{m-1}}^2+\int_0^\infty e^{\lambda_1 t}\|(\rho^\eps-\rho^*)(t)\|_{H^{m}}^2\,dt\\
 &\quad+\sup_{t\in\mathbb{R}_+}e^{\lambda_1 t}\|\nabla(\phi^\eps-\phi^*)(t)\|_{L^2}^2+\int_0^\infty e^{\lambda_1 t}\|\nabla(\phi^\eps-\phi^*)(t)\|_{L^2}^2\,dt\\
 &\quad+\int_0^\infty e^{\lambda_1 t} \|(q^\eps-q^*-q_L^\eps)(t)\|_{H^{m-1}}^2\,dt\\
 &\quad+\int_0^\infty e^{\lambda_1 t} \|(q^\eps+\nabla p(\rho^\eps)+\rho^\eps\nabla\phi^\eps-Z_L^\eps)(t)\|_{H^{m-1}}^2\,dt\leq \tilde C E_0^2\eps^2,
 \end{aligned}
 \end{equation}
where $\tilde C$ and $\lambda_1$ are two positive constants independent of $\eps$, and the initial layer corrections $q_L^\eps$ and $Z_L^\eps$  respectively satisfy
\begin{equation}\label{qL}
\left\{
\begin{aligned}
&\eps^2\partial_t q_L^\eps+q_L^\eps=0,\\
&q_L^\eps(0,x)=\frac{1}{\eps}q_0(x),
\end{aligned}
\right.
\quad\text{and}\quad
\left\{
\begin{aligned}
&\eps^2\partial_t Z_L^\eps+Z_L^\eps=0,\\
&Z_L^\eps(0,x)=\frac{1}{\eps}q_0(x)+\nabla p(\rho_0)(x)+\rho_0\nabla\phi_0(x).
\end{aligned}
\right.
\end{equation}
Consequently, the following global strong convergence properties hold as $\eps\rightarrow0$:
\begin{alignat}{2}
\rho^\eps &\to \rho^*
&\quad& \text{strongly\,\, in\,\, } C(\mathbb{R}_+;H^{m-1})\cap L^2(\mathbb{R}_+;H^m), \label{strong1}\\
\nabla\phi^\eps &\to \nabla\phi^*
&\quad& \text{strongly\,\, in\,\, } C(\mathbb{R}_+;L^{2})\cap L^2(\mathbb{R}_+;L^2), \label{strong10}\\
q^\eps &\to q^*
&\quad& \text{strongly\,\, in\,\, } L^2(1,\infty;H^{m-1}), \label{strong2}\\
q^\eps+\nabla p(\rho^\eps)+\rho^\eps\nabla\phi^\eps
&\to 0
&\quad& \text{strongly\,\, in\,\, } L^2(1,\infty;H^{m-1}). \label{strong3}
\end{alignat}
\end{theorem}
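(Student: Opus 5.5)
We take the rescaled system \eqref{F1.2} as the starting point. The natural unknown is the Darcy-law defect
\[
Z^\eps:=q^\eps+\nabla p(\rho^\eps)+\rho^\eps\nabla\phi^\eps ,
\]
and we compare the solution with the limiting solution $(\rho^*,q^*,\phi^*)$ of Theorem~\ref{theorem1.2}.

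\textbf{The equation for $Z^\eps$.} The momentum equation can be written as
\[
\eps^2\partial_t q^\eps+Z^\eps=-\eps^2\dive\big(q^\eps\otimes q^\eps/\rho^\eps\big).
\]
Adding $\eps^2\partial_t\big(\nabla p(\rho^\eps)+\rho^\eps\nabla\phi^\eps\big)$ to both sides and using $\partial_t\rho^\eps=-\dive q^\eps$ gives
\[
\eps^2\partial_t Z^\eps+Z^\eps=\eps^2 R^\eps,
\]
where $R^\eps$ is built from $\nabla\dive q^\eps$, $\dive q^\eps\,\nabla\phi^\eps$, $\rho^\eps\nabla\partial_t\phi^\eps$ and the convection term. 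Note that $\nabla\partial_t\phi^\eps=-\nabla(-\Delta)^{-1}\dive q^\eps$, which is controlled in $L^2$ by $\|q^\eps\|_{L^2}$.

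Subtracting the initial layer $Z_L^\eps$ from \eqref{qL}, the difference $W:=Z^\eps-Z_L^\eps$ solves
\[
\eps^2\partial_t W+W=\eps^2R^\eps,\qquad W(0)=0.
\]
A weighted $H^{m-1}$ energy estimate gives
\[
\int_0^\infty e^{\lambda_1 t}\|W\|_{H^{m-1}}^2\,dt\lesssim \eps^4\int_0^\infty e^{\lambda_1t}\|R^\eps\|_{H^{m-1}}^2\,dt .
\]
Here $\|R^\eps\|_{H^{m-1}}\lesssim \|q^\eps\|_{H^m}(1+\|\rho^\eps-1\|_{H^m}+\|\nabla\phi^\eps\|_{L^2})$. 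The factor in parentheses is bounded by $C(1+E_0)$ by \eqref{exp:EP1}, and $\int e^{\lambda_0 t}\|q^\eps\|_{H^m}^2\,dt\lesssim E_0^2$. This gives the last line of \eqref{error2} with a bound of order $\eps^4E_0^2(1+E_0)^2\le \eps^2E_0^2$, using \eqref{1.12}.

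\textbf{The density error.} Set $\delta\rho:=\rho^\eps-\rho^*$ and $\nabla\delta\phi:=\nabla(\phi^\eps-\phi^*)$. From $\partial_t\rho^\eps=-\dive q^\eps=\dive(\nabla p(\rho^\eps)+\rho^\eps\nabla\phi^\eps)-\dive Z^\eps$ and \eqref{F1.6} we get
\[
\partial_t\delta\rho-a^2\Delta\delta\rho-\dive\big(\delta\rho\,\nabla\phi^\eps+\rho^*\nabla\delta\phi\big)=-\dive W-\dive Z_L^\eps .
\]
The $Z_L^\eps$ forcing is the delicate term. We have $Z_L^\eps=e^{-t/\eps^2}Z^\eps(0)$ with $\|Z^\eps(0)\|_{H^{m}}\lesssim E_0/\eps$ up to lower order. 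We will not put it directly in $L^2_t$. Instead we absorb it through the substitution $\delta\rho-\dive\mathcal Z$, where
\[
\mathcal Z:=\int_t^\infty Z_L^\eps\,ds=\eps^2Z_L^\eps ,
\]
which is of size $\eps E_0e^{-t/\eps^2}$. Its time integral in $H^m$ is then of order $\eps^2\cdot\eps^2E_0^2\cdot\eps^{-2}$, which is acceptable.

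We then test the corrected equation in $H^{m-1}$, and also pair the zeroth order with $\delta\phi$. Using $-\Delta\delta\phi=\delta\rho$, the term $\int\rho^*\nabla\delta\phi\cdot\nabla\delta\rho$ produces the good term $\int\rho^*|\delta\rho|^2$ plus commutators involving $\nabla\rho^*$. The lower bound for $\rho^*$ comes from the maximum principle in Theorem~\ref{theorem1.2}. The exponential decay and the smallness $\eps E_0$ then close a Gronwall inequality with weight $e^{\lambda_1 t}$. The terms $\delta\rho\,\nabla\phi^\eps$ are handled by interpolation, paying the dissipation $\|\nabla\delta\rho\|_{H^{m-1}}^2$ against $\|\nabla\phi^\eps\|_{W^{m-1,\infty}}$, which is bounded uniformly in $\eps$ through the $e^{\lambda_0t}$ estimates. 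This yields the first two lines of \eqref{error2}.

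\textbf{The momentum error and the convergence statements.} The flux error follows from the identity
\[
q^\eps-q^*-q_L^\eps=W+(Z_L^\eps-q_L^\eps)-\nabla p(\delta\rho)-\delta\rho\nabla\phi^\eps-\rho^*\nabla\delta\phi .
\]
Here $Z_L^\eps-q_L^\eps=e^{-t/\eps^2}\big(\nabla p(\rho_0)+\rho_0\nabla\phi_0\big)$, whose $L^2_t$ norm is of order $\eps E_0$. Finally, the strong convergences \eqref{strong1}--\eqref{strong3} follow from \eqref{error2}. On $[1,\infty)$ the layers are $O(\eps^{-1}e^{-1/\eps^2})$, hence negligible. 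Continuity in time for $\rho^\eps$ comes from the sup-bound.

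The main obstacle is the $H^{m-1}$ versus $H^m$ bookkeeping in the density step. One must choose the correct lift of the initial-layer forcing so that no norm of order $\eps^{-1}$ of $q_0$ survives, while also exploiting that $\nabla\phi^*$ may be large but decays exponentially.
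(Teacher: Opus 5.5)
There is a genuine gap in the step everything else rests on: the bound for $W=Z^\eps-Z_L^\eps$. Your remainder $R^\eps$ contains $-a^2\nabla\dive q^\eps$, which comes from $\eps^2\partial_t\nabla p(\rho^\eps)$. For $q^\eps\in H^m$ this term lies only in $H^{m-2}$. So the bound $\|R^\eps\|_{H^{m-1}}\lesssim\|q^\eps\|_{H^m}(1+\cdots)$ is false, and treating $R^\eps$ as an $L^2_tH^{m-1}$ forcing loses one derivative. The claimed $O(\eps^4)$ also cannot hold in $H^{m-1}$ for general $H^m$ data: for data concentrated at frequencies $|\xi|\sim\eps^{-1}$ the dynamics is hyperbolic on the time scale $\eps^2$, and $W$ is as large as $\nabla\rho^\eps$ there.

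The paper does not put this term into the forcing. It keeps it as $a^2\nabla\partial_t\rho^\eps$ and uses the structure of $W$ itself, $W=a^2\nabla\rho^\eps+(q^\eps-q_L^\eps+\rho^\eps\nabla\phi^\eps)-q_{L,1}^\eps$:
\begin{itemize}
\item the pairing with $a^2\nabla\rho^\eps$ is the exact derivative $\frac{a^4}{2}\frac{d}{dt}\|\nabla\rho^\eps\|_{H^{m-1}}^2$;
\item after one spatial integration by parts, the middle pairing becomes $a^2\langle\dive q^\eps,\dive(q^\eps-q_L^\eps+\rho^\eps\nabla\phi^\eps)\rangle_{H^{m-1}}$ and costs only $H^m$ norms;
\item the $q_{L,1}^\eps$ pairing is integrated by parts in time.
\end{itemize}
Only $\partial_t(\rho^\eps\nabla\phi^\eps)-\dive(q^\eps\otimes v^\eps)$, which genuinely lies in $H^{m-1}$, is treated as forcing. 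The outcome is $\sup_te^{\lambda_1t}\|W\|_{H^{m-1}}^2+\eps^{-2}\int_0^\infty e^{\lambda_1t}\|W\|_{H^{m-1}}^2\,dt\le CE_0^2$. This means $W=O(\eps)$ in weighted $L^2_tH^{m-1}$, which is exactly what \eqref{error2} needs.

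Your treatment of the initial layer in the density step loses derivatives in a similar way. With $\tilde\delta=\delta\rho-\dive\mathcal Z$ and $\mathcal Z=\eps^2Z_L^\eps$, the equation for $\tilde\delta$ acquires $a^2\Delta\dive\mathcal Z$, plus $\dive\mathcal Z$ inside the drift. Pairing this in $H^{m-1}$ against the available dissipation $\|\nabla\tilde\delta\|_{H^{m-1}}$ requires $\mathcal Z\in H^{m+1}$, and so does recovering $\delta\rho$ in $L^2_tH^m$. But the $q_0$-part of $\mathcal Z$ is only in $H^m$, and the $\nabla p(\rho_0)$-part only in $H^{m-1}$.

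No lift is needed. Split $Z_L^\eps=q_L^\eps+q_{L,1}^\eps$ as in \eqref{Zdecom}.
\begin{itemize}
\item The regular part satisfies $\int_0^\infty e^{\lambda_1t}\|q_{L,1}^\eps\|_{H^{m-1}}^2\,dt\le C\eps^2(1+E_0^2)E_0^2$. After moving the divergence onto $\delta\rho$, it is absorbed by $a^2\|\nabla\delta\rho\|_{H^{m-1}}^2$.
\item The singular part $q_L^\eps=\eps^{-1}e^{-t/\eps^2}q_0$ is only $O(1)$ in $L^2_tH^m$, but it is $O(\eps)$ in $L^1_tH^m$. Hence $\int_0^\infty e^{\lambda_1t}\|\dive q_L^\eps\|_{H^{m-1}}\|\delta\rho\|_{H^{m-1}}\,dt\le C\eps\|q_0\|_{H^m}\sup_te^{\lambda_1t/2}\|\delta\rho\|_{H^{m-1}}$, which Young's inequality absorbs into the left-hand side.
\end{itemize}
This $L^1$-in-time observation is the missing idea behind your ``correct lift''.

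A smaller point: $\|\nabla\phi^\eps\|_{W^{m-1,\infty}}$ is not controlled by $\|\rho^\eps-1\|_{H^m}$ when $d=4,5$. Moser and commutator estimates with the time-integrable coefficient $\|\nabla\rho^\eps\|_{H^{m-1}}^2+\|\rho^*-1\|_{H^{m-1}}^2$ suffice instead.
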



\begin{remark}
Some comments on Theorem \ref{theorem1.3} are in order.
\begin{itemize}
	
\item If \eqref{1.11} holds, condition \eqref{1.12} is always satisfied in the relaxation limit $\eps\to 0$. Therefore, Theorem \ref{theorem1.3} is valid for arbitrary large initial data.

\item The relaxation limit holds for ill-prepared large initial data. Indeed, Darcy's law is not required to hold at the initial time $t=0$. More precisely, \eqref{F1.5} implies that
\[   q^*(0,\cdot)=-\nabla p(\rho_0)-\rho_0\nabla\phi_0,   \]
which is different from  the limit of $q^\eps|_{t=0}=\frac{1}{\eps}q_0$. Therefore, there is
an initial layer for variable $q$. The initial layer corrections $q_L^\eps$ and $Z_L^\eps$ play a crucial role in addressing ill-prepared initial data.


\item It is worth emphasizing that the error estimate \eqref{error2} provides 
 an exponential time-decay rate. This estimate not only captures the stability of the relaxation limit, but also reflects the large-time behavior of solutions. 
To the best of our knowledge, Theorem \ref{theorem1.3} appears to be a new attempt for the Euler--Poisson system.

\item Last but not least, we consider the case where $\rho_0=\rho_0^*$ and $q_0$ are independent of $\eps$. In fact, our results hold for the more general initial data, provided that $\rho_0^\eps$, $q_0^\eps$ and $\rho_0^*$ satisfy \eqref{1.11}-\eqref{1.12}, \eqref{DD3.1} and additionally $\|\rho_0^\eps-\rho_0^*\|_{H^{m-1}}=\mathcal{O}(\eps)$.
\end{itemize}
\end{remark}


\vspace{2mm}

 We briefly explain the main ideas of the proof. Compared with the isothermal Euler system without the Poisson term, the major difficulty comes from the nonlocal nonlinear drift
\[
\dive\bigl(\rho \nabla(-\Delta)^{-1}(\rho-1)\bigr),
\]
which occurs both in the Euler--Poisson system \eqref{Euler} and in its limiting drift--diffusion system \eqref{F1.6}.



Our proof strongly relies on a new large-data global well-posedness result for  the limiting drift--diffusion system \eqref{F1.6} (Theorem \ref{theorem1.2}). The key observation is that the nonlinear and nonlocal drift term is compatible with the maximum principle (Lemma \ref{L2.1}). Once the density is bounded from above and below, we show that the nonlinear drift--diffusion operator is coercive, which leads to the $H^1$-energy estimate (Lemmas \ref{L2.2}-\ref{L2.3}). To derive higher-order a priori estimates, we use commutator estimates to put the nonlinear terms into a Gronwall-type form. The coefficient appearing in this inequality can be controlled when $d\leq5$, by the $L^2(0,T;H^2)$ dissipation estimate and the Gagliardo–Nirenberg inequality, which allows us to close the higher-order estimates by an induction argument (Lemma \ref{L2.4}).

For the Euler--Poisson system, 
the entropy inequality gives the basic \(L^2\) control and the relaxation term provides the strong dissipation for momentum (\ref{lem:L2}). However, the Poisson force creates an extra difficulty in the $H^{m-1}$-energy estimates: there is a Gr\"onwall factor of the form
\[
\exp\left(C\int_0^t \|\nabla\phi(t')\|_{L^\infty}^2\,dt'\right),
\]
see Lemma \ref{lem:Hk-refined}. In order to eliminate it,
we recover the drift--diffusion structure for the density. Combining the mass equation with the momentum equation, we have
\[
\partial_t \rho-\eps a^2 \Delta \rho-\eps \dive\big(\rho \nabla \Lambda^{-2}(\rho-1)\big)
=\eps\partial_t \dive q+\eps \dive\dive\big(\frac{q\otimes q}{\rho}\big).
\]
The left-hand side is precisely the drift--diffusion operator appearing in the limiting system (after the transformation $t\rightarrow \frac{t}{\eps}$), while the right-hand side contains only relaxation and convection remainders. Hence, the energy method for the drift--diffusion can be employed, up to those remainder terms, to recover the intrinsic
 density dissipation  (Lemma \ref{lemma4.4}), which  gives the required control of $\|\nabla\phi\|_{L^2(0,T;L^\infty)}$ and thus closes the $H^{m-1}$-energy estimates.  The $H^{m}$-energy estimate is  performed by using  
 hyperbolic symmetrization and avoiding loss of derivatives (Lemma \ref{lemma45}).


It remains to prove that the density of the Euler--Poisson system is uniformly bounded from below and above. Since no maximum principle is available at the hyperbolic level, our strategy is to transfer the bounds from the limiting drift--diffusion system to the Euler--Poisson system via the error estimates on  $\rho^\eps-\rho^*$. That step strongly relies on the maximum principle for the limiting solution $\rho^*$. More precisely, we compare $\rho^\eps$ with $\rho^*$ by introducing the variable from Darcy's law
\[
Z^\eps
:=q^\eps+a^2\nabla\rho^\eps+\rho^\eps\nabla\Lambda^{-2}(\rho^\eps-1)-Z_L^\eps,
\]
where the initial layer correction $Z_L^\eps$ is defined in \eqref{qL}. Then $\rho^\eps$ satisfies
\[
\partial_t \rho^\eps-a^2 \Delta \rho^\eps
-\dive\big(\rho^\eps \nabla \Lambda^{-2}(\rho^\eps-1)\big)
=-\dive Z^\eps-\dive Z_L^\eps.
\]
Subtracting the limiting equation for $\rho^*$, we obtain a drift--diffusion type equation for the density error $\rho^\eps-\rho^*$, with source terms generated by $Z^\eps$ and $Z_L^\eps$. The variable $Z^\eps$ has zero initial value and is strongly damped, hence it is of order $\mathcal O(\eps)$ in $L^2(0,T;H^{m-1})$. The main difficulty comes from the initial layer
\[
Z_L^\eps=q_L^\eps+e^{-\frac{t}{\eps^2}} (\nabla p(\rho_0)+\rho_0 \nabla\phi_0)\quad\text{with the singular part}\,\, q_L^\eps=e^{-\frac{t}{\eps^2}}\frac{1}{\eps}q_0.
\]
Although $q_L^\eps$ is only $\mathcal O(1)$ in $L^2(\mathbb{R}_+;H^m)$, it is $\mathcal O(\eps)$ in $L^1(\mathbb{R}_+;H^m)$. We can prove that the $L^1$-time integrability is sufficient to close the density error estimate, so that no additional diffusive initial layer is needed. Since $\rho^*$ enjoys positive lower and upper bounds by the maximum principle, the smallness of $\rho^\eps-\rho^*$ in $H^{m-1}$ 
yields the same bounds for $\rho^\eps$ when $\eps E_0$ is sufficiently small. See Lemmas \ref{lemma4555}--\ref{lem2.6} for more details. Finally, combining a priori estimates closes the bootstrap under the double-exponential smallness condition
\[
e^{Ce^{CE_0^2}}E_0\eps\ll1.
\]


\vspace{2mm}


Also, we would like to mention another line of research concerning the global smooth irrotational solutions for the Euler--Poisson equations in the absence of damping. See Guo \cite{Guo1}, Guo-Pausader \cite{Guo2} and Germain-Masmoudi-Pausader \cite{GMP1}
in three spatial dimensions, and Ionescu-Pausader \cite{IP1} and Li-Wu \cite{LW1} in two spatial dimensions. These solutions exist globally in time, remain regular for small initial perturbations, and exhibit dispersive decay thanks to the self-consistent electrostatic interactions. The rigorous construction and analysis of such flows, particularly in three spatial dimensions, form a key foundation for the mathematical study of plasma stability, kinetic-to-fluid model justification, and nonlinear dispersive PDE methods.


\vspace{2mm}

 The rest of the paper is organized as follows. Section \ref{section3} collects some definitions and technical lemmas that are frequently used in the proof. In Section \ref{sectionDD}, we develop the maximum principle for the drift--diffusion system and give the proof of Theorem \ref{theorem1.2}. 
 Section \ref{section:EP1} is devoted to uniform energy estimates for the solutions to the Euler--Poisson system. Section \ref{section:EP2} involves error estimates and lower and upper bounds of the density. 
 We close the uniform {\emph{a priori}} estimates and obtain the global existence part of Theorem \ref{theorem1.1} in Section
 \ref{section:EP3}. Finally, the enhanced stability of the error estimate and the proof of Theorem \ref{theorem1.3} will be presented in Section \ref{section:error}.

\vspace{5mm}
\section{Preliminaries}\label{section3}

Throughout this paper, $C>0$ denotes a harmless constant independent of $t$ and $\varepsilon$. The notation $\mathcal{F}(f)$ and  $\mathcal{F}^{-1}(f)$ stand for the Fourier transform and the inverse Fourier transform of the function $f$, and we write
$$
\Lambda^{\sigma}:= (-\Delta)^{\frac{\sigma}{2}}=\mathcal{F}^{-1} \Big( |\xi|^{\sigma} \mathcal{F}(\cdot ) \Big),\quad\sigma\in\mathbb{R}.
$$
In the case $\sigma=2$, we have $-\Delta=\Lambda^{2}$.
For $s\in\mathbb{R}$, we denote by $H^s$ the Sobolev space
of exponent $s$ with the standard norm
$$
\|f\|_{H^s}:=\Big(\int_{\mathbb{R}^d} (1+|\xi|^2)^s|\mathcal{F}(f) (\xi)|^2 \,d\xi \Big)^{\frac{1}{2}}<\infty.
$$
In particular, when $s$ is a positive integer, we have the equivalence
$$
\|f\|_{H^s}^2\sim \sum_{0\leq |\alpha|\leq s}\|\partial^{\alpha} f\|_{L^2}^2.
$$
Furthermore, we denote by $\dot{H}^s(\mathbb{R}^d)$
the homogeneous Sobolev space endowed with the norm
$$
\|f\|_{\dot{H}^s}:=\Big(\int_{\mathbb{R}^d} |\xi|^{2s}|\mathcal{F}(f) (\xi)|^2 \,d\xi \Big)^{\frac{1}{2}}= \|\Lambda^{s}f\|_{L^2}.
$$

The following Sobolev and Moser-type inequalities can be found in
\cite{11}. They will be repeatedly used in the proof.
\begin{lemma}
The following estimates hold.
\vspace{1mm}
\begin{itemize}
\item Let $s>\frac{d}{2}$. The embedding from $H^{s}$ to
$L^\infty\cap C^0$ is continuous and for $u\in H^{s}$,
\begin{equation}
\label{F1.12}
   \|u\|_{L^\infty}\le C\|u\|_{H^{s}}.
\end{equation}

\item Let $-\frac{d}{2}<s<0$ and $q=\frac{2d}{d-2s}$. The embedding from $L^q$ to $H^{s}$ is continuous and for $u\in L^q$,
\begin{equation}
\label{F1.120}
   \|u\|_{H^s}\le C\|u\|_{L^q}.
\end{equation}

\item Let $s_1,s_2\in \mathbb{R}$. The operator $\Lambda^{s_1}$ is an isomorphism from $H^{s_2}$ to $H^{s_2-s_1}$.

\item For
$u, v\in \dot{H}^{s}\cap L^\infty$ with $s>0$,
\begin{equation}
   \|uv\|_{\dot{H}^{s}}\le C \|u\|_{L^{\infty}}\|v\|_{\dot{H}^s}+C\|u\|_{\dot{H}^s}\|v\|_{L^{\infty}}.\label{F1.1311}
\end{equation}
Consequently,  the Sobolev space $H^{s}$ with $s>\frac{d}{2}$ is an algebra, and we have
\begin{equation}
\label{F1.13}
  \|uv\|_{H^{s}}\le C \|u\|_{H^{s}}\|v\|_{H^s}.
\end{equation}


\end{itemize}

\end{lemma}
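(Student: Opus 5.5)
All four items are standard facts about Sobolev spaces on $\R^d$. I would prove each through the Fourier characterization of $H^s$ and $\dot H^s$ recalled above, using Plancherel and elementary multiplier bounds, and quote \cite{11} for the genuinely nontrivial product estimate.

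\emph{Item 1} (the embedding \eqref{F1.12}). I would use Fourier inversion and Cauchy--Schwarz:
\[
\|u\|_{L^\infty}\le C\|\mathcal F u\|_{L^1}\le C\Big(\int(1+|\xi|^2)^{-s}d\xi\Big)^{1/2}\|u\|_{H^s}.
\]
The integral is finite precisely because $s>\frac d2$. Continuity of $u$ follows because $\mathcal F u\in L^1$, by the Riemann--Lebesgue lemma.

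\emph{Item 2} (the embedding \eqref{F1.120}). I would argue by duality. For $0<-s<\frac d2$ the Hardy--Littlewood--Sobolev inequality gives $\dot H^{-s}\hookrightarrow L^{q'}$ with $q'=\frac{2d}{d+2s}$, the H\"older conjugate of $q=\frac{2d}{d-2s}$. Since $H^{-s}\subset\dot H^{-s}$ continuously, we also get $H^{-s}\hookrightarrow L^{q'}$. Dualizing, and using that $H^s$ is the dual of $H^{-s}$ via the $L^2$ pairing, yields $\|u\|_{H^s}=\sup_{\|v\|_{H^{-s}}\le1}|\langle u,v\rangle|\le C\|u\|_{L^q}$.

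\emph{Item 3} (the claimed isomorphism $\Lambda^{s_1}:H^{s_2}\to H^{s_2-s_1}$). I would compare the symbols. The quantity to control is
\[
\|\Lambda^{s_1}f\|_{H^{s_2-s_1}}^2=\int|\xi|^{2s_1}(1+|\xi|^2)^{s_2-s_1}|\mathcal Ff|^2\,d\xi ,
\]
to be compared with $\int(1+|\xi|^2)^{s_2}|\mathcal Ff|^2\,d\xi$. I would split into the high frequencies $|\xi|\ge1$ and the low frequencies $|\xi|<1$.

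On $|\xi|\ge1$ we have $|\xi|^{2s_1}\sim(1+|\xi|^2)^{s_1}$, so both norms are equivalent there, and the inverse $\Lambda^{-s_1}$ is likewise bounded. The main obstacle is the low-frequency region. There the weight $|\xi|^{2s_1}$ is not comparable to $1$: it is bounded by $1$ only when $s_1\ge0$, and bounded below only when $s_1\le0$.

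My plan is therefore to state and prove the item in the form the paper actually uses. The forward direction is bounded for $s_1\ge0$, and in that case $\Lambda^{s_1}$ is a bijection onto its range. In general, $\Lambda^{s_1}$ is an isomorphism between $\{f\in H^{s_2}: \Lambda^{s_1}f\in L^2\}$ and $H^{s_2-s_1}$, where each side carries its graph norm. This is exactly the setting of the Remark: there $\rho_0-1\in H^m$ together with $\nabla\phi_0=\nabla\Lambda^{-2}(\rho_0-1)\in L^2$ is equivalent to $\nabla\phi_0\in H^{m+1}$. The high-frequency equivalence supplies the upper derivatives, and the extra $L^2$ hypothesis supplies the low-frequency part. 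Checking this low-frequency compensation is the one place where care is needed.

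\emph{Item 4} (the product estimate \eqref{F1.1311}). This is the Kato--Ponce fractional Leibniz rule. I would prove it with a Littlewood--Paley paraproduct decomposition $uv=T_uv+T_vu+R(u,v)$, estimating each piece in $\dot H^s$ for $s>0$ by $\|u\|_{L^\infty}\|v\|_{\dot H^s}$ or by $\|u\|_{\dot H^s}\|v\|_{L^\infty}$, as in \cite{11}.

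The algebra property \eqref{F1.13} then follows. Apply \eqref{F1.1311} to the homogeneous part, bound the $L^2$ part by $\|u\|_{L^\infty}\|v\|_{L^2}$, and control the $L^\infty$ norms by item 1, which is where $s>\frac d2$ enters.
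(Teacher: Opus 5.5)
The paper gives no argument for this lemma; it only refers to \cite{11}. Your proofs of items 1, 2 and 4 are the standard ones and are correct. Item 1 follows from Fourier inversion with Cauchy--Schwarz, since the weight $(1+|\xi|^2)^{-s}$ is integrable exactly when $s>\frac d2$. Item 2 follows by duality against the Sobolev embedding $\dot H^{-s}\hookrightarrow L^{q'}$, $q'=\frac{2d}{d+2s}$. Item 4 follows from a paraproduct decomposition in which only the remainder needs $s>0$. The algebra property then follows exactly as you describe. For these items you are simply supplying what the citation leaves implicit.

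On item 3 your diagnosis is right. With the paper's homogeneous $\Lambda^{s_1}=(-\Delta)^{s_1/2}$, the statement is false for every $s_1\neq0$. For example, let $s_1<0$ and $\widehat f=|\xi|^{-a}\mathbf 1_{\{|\xi|<1\}}$ with $\frac d2+s_1\le a<\frac d2$. Then $f$ lies in every $H^{s_2}$, but $\widehat{\Lambda^{s_1}f}$ is not square integrable near $\xi=0$. The statement does become true if $\Lambda^{s_1}$ is replaced by $(1-\Delta)^{s_1/2}$, or the spaces by $\dot H^{s_2}$ and $\dot H^{s_2-s_1}$.

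Your replacement, however, overreaches. You claim that $\Lambda^{s_1}$ is ``in general'' an isomorphism from $\{f\in H^{s_2}:\Lambda^{s_1}f\in L^2\}$ onto $H^{s_2-s_1}$. This fails whenever $s_1>0$ or $s_2<s_1$:
\begin{itemize}
\item If $0<s_1\le s_2$, that set is all of $H^{s_2}$. Take $g$ with $\widehat g=|\xi|^{-a}\mathbf 1_{\{|\xi|<1\}}$ and $\frac d2-s_1\le a<\frac d2$. Then $g$ lies in every $H^\sigma$, while $|\xi|^{-s_1}\widehat g\notin L^2$. So surjectivity fails, by the same low-frequency obstruction you identified.
\item If $s_2<s_1$, the range sits inside $L^2\subsetneq H^{s_2-s_1}$.
\end{itemize}

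The statement you should prove is this: for $s_1\le0$ and $s_2\ge s_1$, $\Lambda^{s_1}$ is an isomorphism from $H^{s_2}\cap\dot H^{s_1}$, normed by $\|f\|_{H^{s_2}}+\|\Lambda^{s_1}f\|_{L^2}$, onto $H^{s_2-s_1}$. The inverse bound is immediate from $|\xi|^{-2s_1}\le(1+|\xi|^2)^{-s_1}$ together with $H^{s_2-s_1}\subset L^2$. The forward bound follows by splitting at $|\xi|=1$.

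This restricted form is what the paper's arguments actually need: $\Lambda^{-1}$ and $\nabla\Lambda^{-2}$ act on $H^k$, $k\ge0$, with $\|\nabla\phi\|_{L^2}$ carried separately, as in the remark after Theorem \ref{theorem1.1}. The paper invokes the unrestricted form in one place, the second inequality of \eqref{est-f1-Hk}. That step can be repaired by estimating $\|f_1\|_{L^2}\le\|\nabla\phi\|_{L^\infty}\|\rho-1\|_{L^2}$ directly and using only homogeneous norms for the derivatives.
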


We also state commutator estimates that are widely used in analyzing the highest-order derivative (cf. \cite{KatoPonce1988}). 
\begin{lemma}
For any integer $k\geq 1$, if $\nabla u\in L^\infty\cap H^{k-1}$ and
$v\in L^\infty\cap H^{k-1}$, then it holds
\begin{equation}
\label{F1.14}
  \sum_{|\alpha|=k}
  \|\partial^{\alpha}(uv)-u\partial^{\alpha}v\|_{L^2}
  \leq
  C
  \|\nabla u\|_{L^\infty}\|v\|_{H^{k-1}}
  +C\|\nabla u\|_{H^{k-1}}\|v\|_{L^\infty}.
\end{equation}
\end{lemma}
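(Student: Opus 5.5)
The statement to prove is the commutator (Kato--Ponce type) estimate \eqref{F1.14}. I would prove it by expanding the commutator with the Leibniz rule and then estimating each piece with Gagliardo--Nirenberg interpolation. For $|\alpha|=k$,
\[
\partial^{\alpha}(uv)-u\partial^{\alpha}v=\sum_{0<\beta\le\alpha}\binom{\alpha}{\beta}\partial^{\beta}u\,\partial^{\alpha-\beta}v .
\]
Every term has at least one derivative on $u$. Writing $\partial^\beta u=\partial^{\beta-e_i}(\partial_i u)$ for some $i$ with $\beta_i\ge1$, each term has the form $\partial^{\gamma}w\,\partial^{\alpha-\beta}v$. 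Here $w=\nabla u$ (one component), $|\gamma|=j-1$, $|\alpha-\beta|=k-j$ and $1\le j\le k$. So the total number of derivatives falling on the pair $(w,v)$ is exactly $k-1$.

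Each product is then bounded in $L^2$ by Hölder with exponents $p_1=\frac{2(k-1)}{j-1}$ and $p_2=\frac{2(k-1)}{k-j}$, so that $\frac1{p_1}+\frac1{p_2}=\frac12$. The extreme cases $j=1$ and $j=k$ are simply $L^\infty\times L^2$ pairings and need no interpolation. I would apply the Gagliardo--Nirenberg inequality on $\R^d$:
\[
\|\partial^{\gamma}w\|_{L^{p_1}}\le C\|w\|_{L^\infty}^{1-\theta_1}\|\nabla^{k-1}w\|_{L^2}^{\theta_1},\qquad \theta_1=\tfrac{j-1}{k-1},
\]
and
\[
\|\partial^{\alpha-\beta}v\|_{L^{p_2}}\le C\|v\|_{L^\infty}^{1-\theta_2}\|\nabla^{k-1}v\|_{L^2}^{\theta_2},\qquad \theta_2=\tfrac{k-j}{k-1}.
\]
Since $\theta_1+\theta_2=1$, the product is bounded by
\[
\bigl(\|w\|_{L^\infty}\|\nabla^{k-1}v\|_{L^2}\bigr)^{\theta_2}\bigl(\|v\|_{L^\infty}\|\nabla^{k-1}w\|_{L^2}\bigr)^{\theta_1}.
\]
Young's inequality $x^{\theta_2}y^{\theta_1}\le x+y$ then gives
\[
\|\nabla u\|_{L^\infty}\|v\|_{\dot H^{k-1}}+\|\nabla u\|_{\dot H^{k-1}}\|v\|_{L^\infty}.
\]
This is dominated by the right-hand side of \eqref{F1.14}. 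Summing over $\beta$ and over $|\alpha|=k$ gives the claim, with $C$ depending only on $d$ and $k$. When $k=1$ the sum reduces to $\partial u\,v$, which is trivially bounded by $\|\nabla u\|_{L^2}\|v\|_{L^\infty}$.

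The main point requiring care is the Gagliardo--Nirenberg inequality in the form $\|\nabla^{i}f\|_{L^{2m/i}}\lesssim\|f\|_{L^\infty}^{1-i/m}\|\nabla^m f\|_{L^2}^{i/m}$ for $0\le i\le m$, which I take as classical (Nirenberg) and valid in every dimension since it involves no Sobolev embedding. The only subtlety is justifying it for functions that are merely in $L^\infty\cap\dot H^{m}$. I would handle this by first proving it for Schwartz functions, via integration by parts and induction on $i$, and then extending by density/mollification, using that $\nabla u\in L^\infty\cap H^{k-1}$ and $v\in L^\infty\cap H^{k-1}$ are preserved and approximated under mollification. This is exactly the standard argument of Kato--Ponce \cite{KatoPonce1988}, so I would cite it there.
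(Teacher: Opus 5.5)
Your proof is correct, but it follows a different route from the paper. The paper gives no argument and simply cites Kato--Ponce \cite{KatoPonce1988}. There, commutator bounds of this kind are proved for the Bessel potential $(1-\Delta)^{s/2}$ with Coifman--Meyer multiplier (pseudo-differential) machinery, which also gives fractional $s$ and $L^p$ versions.

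What you give is the elementary integer-order argument: a Leibniz expansion followed by the endpoint Gagliardo--Nirenberg inequality
\[
\|\nabla^i f\|_{L^{2m/i}}\le C\|f\|_{L^\infty}^{1-i/m}\|\nabla^m f\|_{L^2}^{i/m}.
\]
This is the Moser/Klainerman--Majda proof, as in \cite{11}. Your closing sentence calling it "exactly the standard argument of Kato--Ponce" is therefore a misattribution, though a harmless one.

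The bookkeeping checks out:
\begin{itemize}
\item $1/p_1+1/p_2=1/2$ and $\theta_1+\theta_2=1$.
\item Your exponents satisfy the scaling relation of Lemma \ref{Lt9} with $\alpha=i/m<1$, so Nirenberg's exceptional case is avoided.
\item The endpoints $j=1$, $j=k$ and the case $k=1$ are handled correctly.
\end{itemize}

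Two small points belong in your density step. First, the truncation argument for Gagliardo--Nirenberg uses that $w=\partial_i u$ and $v$ lie in $L^2$; this holds because $\nabla u, v\in H^{k-1}$. Second, $u$ itself is only Lipschitz, so the Leibniz identity for the commutator should also be obtained by mollifying $u$ and $v$ and passing to the limit. The right-hand side of \eqref{F1.14} does not increase under mollification, and the $L^2$ norm is lower semicontinuous.

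Your route is self-contained and uses only tools already listed in the paper's preliminaries, which suffices here because the paper only ever needs integer order $k$. The cited route is heavier but covers non-integer regularity and $L^p$.
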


We state the lemma concerning nonlinear composition estimates in Sobolev spaces (see \cite{Taylor1996}).
\begin{lemma}
Let $g$ be a smooth function on a compact set
$D\subset\mathbb{R}^n$ and let $k\ge 1$ be an integer. For $u\in H^k\cap L^\infty$
satisfying $u(x)\in D$,
\begin{equation}
\label{F1.15}
   \sum_{|\alpha|=k} \|\partial^{\alpha} g(u)\|_{L^2}\le C_{k,D,g}\bigl(1+\|u\|_{L^\infty}^{k-1}\bigr)\|u\|_{\dot{H}^k}.
\end{equation}

\end{lemma}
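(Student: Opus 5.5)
The plan is the classical Moser argument: expand $\partial^\alpha g(u)$ by the chain rule, apply Hölder's inequality with exponents dictated by the number of derivatives on each factor, and interpolate each factor between $L^\infty$ and $\dot H^k$ with the Gagliardo--Nirenberg inequality.

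\textbf{Step 1 (chain rule).} Fix $|\alpha|=k\ge1$. By the Faà di Bruno formula, $\partial^\alpha g(u)$ is a finite linear combination, with coefficients depending only on $k$ and $n$, of terms
\begin{equation*}
(\partial^{j} g)(u)\,\partial^{\beta_1}u\cdots\partial^{\beta_j}u,\qquad 1\le j\le k,\quad \beta_1+\cdots+\beta_j=\alpha,\quad |\beta_i|\ge1 .
\end{equation*}
Here $\partial^j g$ denotes some $j$-th order derivative of $g$, and the products of components are understood when $u$ is vector valued. Since $g$ is smooth and $u(x)\in D$ with $D$ compact, we have $\|(\partial^jg)(u)\|_{L^\infty}\le \sup_{D}|\partial^j g|\le C_{k,D,g}$. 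It therefore suffices to bound $\|\partial^{\beta_1}u\cdots\partial^{\beta_j}u\|_{L^2}$.

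\textbf{Step 2 (Hölder and interpolation).} Set $p_i:=2k/|\beta_i|\in[2,\infty)$. Then $\sum_i 1/p_i=\sum_i|\beta_i|/(2k)=1/2$, so Hölder's inequality gives
\begin{equation*}
\|\partial^{\beta_1}u\cdots\partial^{\beta_j}u\|_{L^2}\le \prod_{i=1}^j\|\partial^{\beta_i}u\|_{L^{p_i}} .
\end{equation*}
For each factor I would use the Gagliardo--Nirenberg inequality for intermediate derivatives,
\begin{equation*}
\|\partial^{\beta}u\|_{L^{2k/|\beta|}}\le C\,\|u\|_{L^\infty}^{1-\frac{|\beta|}{k}}\,\|u\|_{\dot H^k}^{\frac{|\beta|}{k}},\qquad 1\le|\beta|\le k .
\end{equation*}
When $|\beta|=k$ this is trivial. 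The exponents add up to $\sum_i(1-|\beta_i|/k)=j-1$ on $\|u\|_{L^\infty}$ and $\sum_i|\beta_i|/k=1$ on $\|u\|_{\dot H^k}$. Hence each term is bounded by $C\|u\|_{L^\infty}^{j-1}\|u\|_{\dot H^k}$.

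\textbf{Step 3 (conclusion).} Since $0\le j-1\le k-1$, we have $\|u\|_{L^\infty}^{j-1}\le 1+\|u\|_{L^\infty}^{k-1}$. Summing over the finitely many terms and over $|\alpha|=k$ yields \eqref{F1.15}.

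\textbf{Main obstacle.} The only nontrivial ingredient is the interpolation inequality in Step 2 under the weak hypothesis $u\in L^\infty\cap \dot H^k$, with no $L^2$ control on $u$ itself. I would take it from Nirenberg's inequality. If a self-contained argument is preferred, I would prove it by induction on $|\beta|$ via the integration by parts estimate
\begin{equation*}
\|\nabla v\|_{L^{2p}}^2\le C\|v\|_{L^{\infty}}\|\nabla^2 v\|_{L^{p}},
\end{equation*}
applied to derivatives of $u$, and iterate it up to order $k$. To justify the integrations by parts, I would first work with smooth compactly supported approximations and then pass to the limit by density.
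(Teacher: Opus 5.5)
Your argument is correct, and it is essentially the standard Moser proof behind the reference the paper cites (Taylor) instead of proving the lemma: Fa\`a di Bruno, H\"older with exponents $2k/|\beta_i|$, and the interpolation $\|\partial^\beta u\|_{L^{2k/|\beta|}}\le C\|u\|_{L^\infty}^{1-|\beta|/k}\|u\|_{\dot H^k}^{|\beta|/k}$, which is the paper's Lemma \ref{Lt9} with $q=\infty$, $r=2$, $\alpha=|\beta|/k$. One caveat on your optional self-contained route: $\|\nabla v\|_{L^{2p}}^2\le C\|v\|_{L^\infty}\|\nabla^2 v\|_{L^p}$ can only be applied with $v=u$, since derivatives of $u$ are not in $L^\infty$, so the iteration should instead use the mixed form $\|\nabla v\|_{L^{r}}^2\le C\|v\|_{L^{q}}\|\nabla^2 v\|_{L^{p}}$ with $2/r=1/q+1/p$, applied to $v=\nabla^{j-1}u$, which gives log-convexity of $j\mapsto\|\nabla^j u\|_{L^{2k/j}}$.
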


Finally, we recall the Gagliardo-Nirenberg inequality.
\begin{lemma}
\label{Lt9}
    Assume that $q,r$ satisfy $1\leq q,r\le \infty$ and $j,m\in \mathbb{Z}$
satisfy $0\le j<m$. For any $f\in C_0^{\infty}(\mathbb{R}^d)$, we have
	\begin{equation}
	\|D^jf\|_{L^p}\leq
C\|D^mf\|_{L^r}^{\alpha}\|f\|_{L^q}^{1-\alpha}\label{w11},
	\end{equation}
where
\[  \frac{1}{p}-\frac{j}{d}
=\alpha\Big(\frac{1}{r}-\frac{m}{d}\Big)+(1-\alpha)\frac{1}{q},\quad
\frac{j}{m}\le\alpha\le1,  \]
and $C$ depends on $m,d,j,q,r,\alpha$.
\end{lemma}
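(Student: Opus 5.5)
This is a standard inequality; rather than reprove it from scratch, I would recover it by density and scaling from the classical Gagliardo–Nirenberg theorem.

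\emph{Reduction to $j=0$.} If $j\ge1$, apply the case $j=0$ to $g=D^jf$ with $m$ replaced by $m-j$, and then interpolate the $L^q$ norm of $D^jf$ between $\|f\|_{L^q}$ and $\|D^mf\|_{L^r}$. For the lower-order endpoint I would use the one-dimensional identity
\[
f'(x)=\frac{f(x+h)-f(x)}{h}-\frac1h\int_x^{x+h}(x+h-s)f''(s)\,ds ,
\]
averaged in $h$ and in the remaining variables. Composing the resulting exponents reproduces the relation
\[
\frac1p-\frac jd=\alpha\Bigl(\frac1r-\frac md\Bigr)+(1-\alpha)\frac1q .
\]

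\emph{The endpoint $\alpha=1$.} Here the inequality is the Sobolev embedding $\dot W^{m,r}\subset L^p$. I would prove it by representing $f$ through a Riesz potential of $D^mf$ and invoking Hardy--Littlewood--Sobolev; for $r=1$ I would instead use the Gagliardo--Nirenberg product (Loomis--Whitney) argument.

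\emph{Interior $\alpha$.} First prove the unweighted additive estimate
\[
\|D^jf\|_{L^p}\le C\bigl(\|D^mf\|_{L^r}+\|f\|_{L^q}\bigr),
\]
using a smooth cutoff in frequency: the low frequencies are controlled by $\|f\|_{L^q}$ via Bernstein, and the high frequencies by $D^mf$. Then replace $f$ by the dilate $f(\lambda\,\cdot)$ and optimize over $\lambda>0$. The scaling relation above is exactly what converts the additive bound into the multiplicative form with exponents $\alpha$ and $1-\alpha$.

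\emph{Main obstacle.} The delicate cases are the exceptional ones: $r>1$ with $m-j-d/r$ a nonnegative integer and $\alpha=1$, where the $L^\infty$ embedding fails, and the endpoint $p=\infty$. Since the paper only applies the lemma in subcritical ranges, I would handle these by citing the classical statement (Nirenberg, 1959) rather than proving them, and record them as the known exclusions.
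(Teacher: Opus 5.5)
The paper gives no proof of this lemma; it only recalls the classical Gagliardo--Nirenberg inequality, so deferring to Nirenberg matches what the authors do. The part you do propose to prove has a gap in the interior step. You split at a fixed frequency and bound the low part by $\|f\|_{L^q}$ and the high part by $\|D^mf\|_{L^r}$ (Bernstein/Young). This works only when $p\ge\max\{q,r\}$ and $\frac1p>\frac1r-\frac{m-j}{d}$; the latter condition is what makes $\sum_{k\ge0}2^{k(j-m+d/r-d/p)}$ converge. That is a proper part of the admissible range.

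For example, take $d=3$, $j=0$, $m=1$, $r=1$, $q=\infty$, $\alpha=\frac12$, which gives $p=3$. The inequality $\|f\|_{L^3}\le C\|\nabla f\|_{L^1}^{1/2}\|f\|_{L^\infty}^{1/2}$ is true, by H\"older plus $\|f\|_{L^{3/2}}\le C\|\nabla f\|_{L^1}$. Yet both pieces of your argument fail here:
\begin{itemize}
\item $\|P_{\le1}f\|_{L^3}\le C\|f\|_{L^\infty}$ fails for a wide bump;
\item $\|P_{>1}f\|_{L^3}\le C\|\nabla f\|_{L^1}$ fails for $f=\psi(\cdot/\delta)$, where the two sides have sizes $\delta$ and $\delta^2$.
\end{itemize}
The standard fix is to use the split-plus-scaling argument at only one extreme exponent. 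Every intermediate $\alpha$ is then reached by H\"older's inequality in the Lebesgue exponent. For $j=0$: if $mr<d$, interpolate between $\|f\|_{L^q}$ and the Sobolev endpoint $\frac1{p_1}=\frac1r-\frac md$; if $mr>d$, interpolate between $\|f\|_{L^q}$ and the $L^\infty$ bound, where the split does work since $m>\frac dr$. The borderline $mr=d$ is exactly where the exceptions live.

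There is also a slip in your reduction to $j=0$. The lower-order norm of $D^jf$ must be taken in $L^s$ with $\frac1s=\frac jm\frac1r+(1-\frac jm)\frac1q$, which is the pure interpolation case $\alpha=\frac jm$ coming from your one-dimensional argument. It cannot be $L^q$: bounding $\|D^jf\|_{L^q}$ by $\|D^mf\|_{L^r}$ and $\|f\|_{L^q}$ would require an exponent below $\frac jm$ whenever $r>q$.

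You are right to flag the exceptional cases, but they cannot be handled by citation. There the lemma as printed is false and must be restricted. Three cases need exclusion:
\begin{itemize}
\item For $1<r<\infty$, $m-j-\frac dr\in\mathbb{N}_0$ and $\alpha=1$, the inequality is a failing critical embedding, e.g.\ $\|f\|_{L^\infty}\le C\|\nabla f\|_{L^2}$ in $d=2$.
\item When the relation gives $\frac1p<0$, the left-hand side is undefined; Nirenberg reads it as a H\"older seminorm.
\item For $j=0$, $q=\infty$ and $1<r=\frac dm$, the relation forces $p=\infty$ for every $\alpha$. The inequality then reduces to $\|f\|_{L^\infty}\le C\|D^mf\|_{L^{d/m}}$ and fails for \emph{every} $\alpha\in(0,1]$. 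This case is not among the two exceptions in Nirenberg's 1959 statement, so a bare citation does not settle it either.
\end{itemize}
None of this affects the paper. Its only use is $j=0$, $m=3$, $q=r=2$, $p=\infty$, $\alpha=\frac d6$, where $d\le5$ keeps $\alpha<1$; the exceptional dimension would be $d=6$. In that configuration $p\ge\max\{q,r\}$ and $3>\frac d2$, so your frequency-split-plus-scaling argument applies verbatim.
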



\section{Global well-posedness of the drift--diffusion system}\label{sectionDD}
\vspace{2mm}

In this section, we consider the Cauchy problem for the one-fluid isothermal
drift--diffusion system \eqref{F1.6}.
Since the electric field is given by \eqref{nablaPhi}, it suffices to solve the reformulated drift--diffusion system \eqref{DDp}.

We prove the large-data global existence and uniqueness of classical solutions to the Cauchy problem \eqref{DDp}. 
\begin{theorem}\label{theorem1.2}
Let $1\le d\le 5$ and $m>\frac{d}{2}+1$ be an integer. Assume that $\rho_0^*$ satisfies
\begin{equation}
\begin{aligned}\label{DD3.1}
\rho_0^*-1\in H^{m-1},\quad\quad \rho_1\leq \rho_0^*(x)\leq \rho_2,\quad x\in\mathbb{R}^d,
\end{aligned}
\end{equation}
with two positive constants $\rho_1\leq \rho_2$. Then, system \eqref{DDp} admits a unique global classical solution $\rho^*$ that satisfies 
\begin{equation}\label{1.16}
\begin{aligned}
&\rho^*-1\in C(\mathbb{R}_+;H^{m-1})\cap L^2(\mathbb{R}_+;H^{m}), \quad\quad \rho_1\leq \rho^*(t,x)\leq \rho_2,\quad (t,x)\in \mathbb{R}_+\times\mathbb{R}^d,
\end{aligned}
\end{equation}
and
\begin{equation}\label{exp1}
\begin{aligned}
\sup_{t\in\mathbb{R}_+}e^{\lambda_0^* t}\|\rho^*(t)-1\|_{H^{m-1}}^2+\int_0^\infty e^{\lambda_0^* t}\|\rho^*(t)-1\|_{H^{m}}^2\,dt \leq C_0^* \|\rho_0^*-1\|_{H^{m-1}}^2.
\end{aligned}
\end{equation}
Additionally, if $\phi^*$ is given by \eqref{nablaPhi} and  $\grad\phi^*_0:=\grad(-\Delta)^{-1}(\rho_0^*-1)\in L^2$, then it holds
\begin{align}\label{1.19}
\sup_{t\in\mathbb{R}_+}e^{\lambda_0^* t}\|\nabla\phi^*(t)\|_{L^2}^2+\int_0^\infty e^{\lambda_0^* t}\|\nabla\phi^*(t)\|_{L^2}^2\,dt\leq  C_0^*\|\nabla\phi^*_0\|_{L^2}^2.
\end{align}
Here, $\lambda_0^*$ and $C_0^*>0$ are two constants.
\end{theorem}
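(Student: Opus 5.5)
The proof will proceed by local existence combined with a priori estimates that are closed through a continuation argument. The key is a maximum principle that yields $\rho_1\le\rho^*\le\rho_2$ uniformly in time, independently of the size of the data.

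\textbf{Local existence.} Since \eqref{DDp} with $p=a^2\rho$ is a uniformly parabolic quasilinear equation, the plan is to write it in nondivergence form:
\[
\partial_t\rho^*-a^2\Delta\rho^*-\nabla\rho^*\cdot\nabla\phi^*+\rho^*(\rho^*-1)=0,\qquad \nabla\phi^*=\nabla\Lambda^{-2}(\rho^*-1),
\]
where we used $-\Delta\phi^*=\rho^*-1$. Standard contraction arguments give a local solution in $C([0,T];H^{m-1})\cap L^2(0,T;H^m)$. We note that $\nabla\phi^*\in H^{m}$ near high frequencies and is bounded in $L^\infty$ once $d\ge3$, or in general via $\rho^*-1\in L^2\cap L^\infty$.

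\textbf{Maximum principle.} At a point of maximum of $\rho^*$, the gradient vanishes and the Laplacian is nonpositive. The zeroth-order term $-\rho^*(\rho^*-1)$ is $\le 0$ when $\rho^*\ge1$. Since $\rho_2\ge 1$ necessarily (because $\rho_0^*-1\in L^2$), this gives $\sup\rho^*\le\max(\rho_2,1)=\rho_2$. Symmetrically, at a minimum the term is $\ge0$ when $\rho^*\le 1$, which gives $\rho^*\ge\rho_1$. To make this rigorous on $\mathbb{R}^d$, I will test the equation against $(\rho^*-\rho_2)_+$ and $(\rho_1-\rho^*)_+$ in $L^2$. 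The drift term contributes $\int\nabla\rho^*\cdot\nabla\phi^*\,(\rho^*-\rho_2)_+=\frac12\int(\rho^*-1)(\rho^*-\rho_2)_+^2\ge0$ after integrating by parts. Hence the drift combines with the reaction term into a favorable sign, and Gronwall gives that $\|(\rho^*-\rho_2)_+\|_{L^2}$ stays zero.

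\textbf{$L^2$/$H^1$ estimates with exponential decay.} Next I will multiply \eqref{DDp} by $\rho^*-1$. The drift yields $-\int\rho^*\nabla\phi^*\cdot\nabla\rho^*$. Writing $\rho^*=1+(\rho^*-1)$ and integrating by parts, this equals
\[
-\int|\nabla\Lambda^{-1}\cdot|^2\ \text{type term}\;=\;-\int(\rho^*-1)^2-\tfrac12\int(\rho^*-1)^3 .
\]
More precisely, $\int\rho^*\nabla\phi^*\cdot\nabla\rho^*$ equals $\int(\rho^*-1)^2+\frac12\int(\rho^*-1)^3$, which is bounded below by $\frac{1+\rho_1}{2}\ldots$, and in any case by $c(\rho_1)\|\rho^*-1\|_{L^2}^2$, since $1+\frac12(\rho^*-1)\ge\frac{1+\rho_1}2>0$. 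This coercivity (Lemma-type ``the operator is coercive'') gives
\[
\tfrac{d}{dt}\|\rho^*-1\|_{L^2}^2+c(\|\nabla\rho^*\|_{L^2}^2+\|\rho^*-1\|_{L^2}^2)\le0,
\]
and hence exponential decay. For $\nabla\phi^*$ I will test the equation against $\phi^*$, i.e.\ use $\partial_t\nabla\phi^*=-\nabla\Lambda^{-2}\partial_t\rho^*$. This gives $\frac{d}{dt}\|\nabla\phi^*\|^2+2a^2\|\rho^*-1\|^2+2\int\rho^*|\nabla\phi^*|^2=0$, and $\rho^*\ge\rho_1$ yields \eqref{1.19}. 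For the $H^1$ level, I will test against $-\Delta\rho^*$. The bad terms $\nabla\rho^*\cdot\nabla\phi^*\Delta\rho^*$ are controlled by $\|\nabla\phi^*\|_{L^\infty}$, which is bounded by $\|\rho^*-1\|_{L^\infty}$-type quantities only in low dimension. So instead I will bound them by Gagliardo--Nirenberg using the $L^2_tH^1$ dissipation already obtained.

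\textbf{Higher order (main obstacle).} For $2\le k\le m-1$, I will apply $\partial^\alpha$ and use the commutator estimate \eqref{F1.14} and \eqref{F1.1311}. This gives
\[
\tfrac{d}{dt}\|\rho^*-1\|_{\dot H^k}^2+c\|\rho^*-1\|_{\dot H^{k+1}}^2\le C\big(\|\nabla\rho^*\|_{L^\infty}^2+\|\nabla\phi^*\|_{L^\infty}^2+1\big)\|\rho^*-1\|_{\dot H^k}^2,
\]
where $\|\rho^*\|_{L^\infty}$ is bounded by the maximum principle. The crux is the integrability in time of the Gronwall coefficient. First, $\|\nabla\phi^*\|_{L^\infty}$ is controlled through $\|\rho^*-1\|_{L^2\cap L^\infty}$, using $\|\nabla\Lambda^{-2}f\|_{L^\infty}\lesssim\|f\|_{L^2}^{\theta}\|f\|_{L^\infty}^{1-\theta}$ for $d\ge2$, together with the exponential $L^2$ decay. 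Second, for $\|\nabla\rho^*\|_{L^\infty}$ the induction is set up as follows: $L^2_tH^2$ at step $k=1$ plus GN interpolation with $\dot H^{k+1}$ absorbs the $\nabla\rho^*$ factor. This is exactly where the restriction $d\le5$ enters, since interpolating $\|\nabla\rho\|_{L^\infty}^2$ between $\|\rho\|_{H^2}$ and the dissipated top norm requires $d/2-1<2$-type exponents. Once this uniform bound is obtained, inserting the decay of lower norms gives \eqref{exp1} after multiplying by $e^{\lambda_0^*t}$. Uniqueness follows from an $L^2$ difference estimate. The hardest step, I expect, is making this Gagliardo--Nirenberg/induction bookkeeping close with a coefficient integrable on $\mathbb{R}_+$ for all $d\le5$.
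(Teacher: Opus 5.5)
Your local existence, your maximum principle, the $L^2$ identity and the $\dot H^{-1}$ estimate giving \eqref{1.19} are correct and match Lemmas~\ref{L2.1}--\ref{L2.2} and \eqref{D1}. That includes your remark that $\rho_1\le1\le\rho_2$ is forced by $\rho_0^*-1\in L^2$. The gap is in the higher-order step, where your inequality
\[
\frac{d}{dt}\|\rho^*-1\|_{\dot H^k}^2+c\|\rho^*-1\|_{\dot H^{k+1}}^2\le C\big(\|\nabla\rho^*\|_{L^\infty}^2+\|\nabla\phi^*\|_{L^\infty}^2+1\big)\|\rho^*-1\|_{\dot H^k}^2
\]
cannot be closed for large data. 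There are three problems.

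\begin{itemize}
\item The constant $1$ in the coefficient is not integrable on $\mathbb{R}_+$. Gr\"onwall therefore gives only $e^{Ct}$ growth, not \eqref{exp1}.
\item More seriously, at $k=2$ you only know $\rho^*-1\in L^2_tH^2$. This does not control $\|\nabla\rho^*\|_{L^\infty}$ once $d\ge2$, and for $d\ge3$ interpolating with the current dissipation cannot help. The term $\|\nabla\rho\|_{L^\infty}^2\|\rho-1\|_{\dot H^2}^2$ has exactly the scaling of $\|\rho\|_{\dot H^3}^2$ but is quartic. For $d=3$, $\|\nabla\rho\|_{L^\infty}^2\lesssim\|\rho\|_{\dot H^2}\|\rho\|_{\dot H^3}$, and Young leaves $C\|\rho-1\|_{\dot H^2}^6$. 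That is a Gr\"onwall factor $\|\rho-1\|_{\dot H^2}^4$, whose time integrability is exactly what you are trying to prove. An inequality $y'\lesssim y^3$ with only $\int y\,dt<\infty$ known can blow up. For $d=4,5$, $\dot H^3$ does not even reach $\nabla\rho\in L^\infty$.
\item Your bound $\|\nabla\Lambda^{-2}f\|_{L^\infty}\lesssim\|f\|_{L^2}^{\theta}\|f\|_{L^\infty}^{1-\theta}$ holds only for $d\ge3$, with $\theta=2/d$. It fails for $d=2$ (logarithmic divergence at low frequencies) and for $d=1$. There you would need $\nabla\phi_0^*\in L^2$, which is not assumed for \eqref{1.16}--\eqref{exp1}.
\end{itemize}
Consequently your explanation of where $d\le5$ enters is also not the operative one.

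The missing idea is to arrange the estimate so that neither $\nabla\phi^*$ nor $\nabla\rho^*$ ever appears in $L^\infty$.
\begin{itemize}
\item Write the equation as $\partial_t\rho-a^2\Delta\rho+(\rho-1)=\dive\big((\rho-1)\nabla\phi\big)$, which keeps a linear damping term.
\item Split $\partial_j\partial^\alpha\big((\rho-1)\partial_j\phi\big)=(\partial_j\partial^\alpha\rho)\,\partial_j\phi+[\text{commutator}]$. Tested against $\partial^\alpha\rho$, the transport part integrates by parts to $-\langle\Delta\phi,(\partial^\alpha\rho)^2\rangle=\langle\rho-1,(\partial^\alpha\rho)^2\rangle$, which the maximum principle bounds.
\item By \eqref{F1.14}, the commutator involves only $\nabla^2\phi=\nabla^2\Lambda^{-2}(\rho-1)$, a zeroth-order operator. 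Thus $\|\nabla^2\phi\|_{\dot H^k}\lesssim\|\rho-1\|_{\dot H^k}$ and $\|\nabla^2\phi\|_{L^\infty}\lesssim\|\rho-1\|_{H^3}$. The embedding $H^3\hookrightarrow L^\infty$ is where $d\le5$ is used.
\item This yields \eqref{maomao}, whose coefficient $\|\rho-1\|_{H^3}$ enters linearly. At $k=2$, Young turns $\|\rho-1\|_{\dot H^3}\|\rho-1\|_{\dot H^2}^2$ into $\eta\|\rho-1\|_{\dot H^3}^2+C\|\rho-1\|_{\dot H^2}^4$. The Gr\"onwall factor $\|\rho-1\|_{\dot H^2}^2$ is integrable by the $H^1$ step.
\item For $k\ge3$, the factor $\|\rho-1\|_{H^3}^2$ is integrable by the $k=2$ step, and the damping supplies the exponential weight.
\end{itemize}
The same integration by parts makes your $H^1$ step work in every dimension without Gagliardo--Nirenberg. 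Only the maximum principle and $\|\nabla^2\phi\|_{L^2}\lesssim\|\rho-1\|_{L^2}$ are needed.

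If you prefer to keep your route for $d\ge3$, test the divergence form directly. Then \eqref{F1.1311} gives a lower-order source $C\|\rho-1\|_{\dot H^{k-1}}^2$ plus the single factor $\|\nabla\phi^*\|_{L^\infty}^2$. Your interpolation together with the exponential $L^2$ decay does make that factor integrable. But the $\|\nabla\rho^*\|_{L^\infty}^2$ factor must be removed.
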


For simplicity, we omit the superscript $^*$ and denote the solution and initial datum to \eqref{DDp} by $\rho$ and $\rho_0$, respectively.

According to the classical theorem for parabolic equations (see \cite{Lady}), there exists a time $T>0$ such that the problem \eqref{DDp} 
has a solution $\rho$ satisfying $\rho-1\in C\bigl([0,T];H^{m-1}(\mathbb{R}^d)\bigr)$ and that $\rho$ is away from zero.
To extend the local solution globally in time, we need to establish {\emph{a priori}} estimates independent of $T$ (see Lemmas \ref{L2.1}-\ref{L2.4} below).

\begin{lemma}\label{L2.1}
(Maximum principle) For all $(t,x)\in [0,T]\times\mathbb{R}^d$, it holds that
\begin{equation}\label{F2.4}
\rho_1\le\rho(t,x)\le\rho_2.
\end{equation}
\end{lemma}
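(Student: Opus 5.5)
Expanding the drift term with $-\Delta\phi=\rho-1$, where $\nabla\phi=\nabla(-\Delta)^{-1}(\rho-1)$, turns \eqref{DDp} into a local parabolic equation with a zeroth-order reaction term:
\[
\partial_t\rho-a^2\Delta\rho-\nabla\phi\cdot\nabla\rho=\rho\,\Delta\phi=-\rho(\rho-1).
\]
The nonlocal part then enters only through the transport coefficient $\nabla\phi$, which is bounded on $[0,T]$ for the local classical solution. The reaction term $-\rho(\rho-1)$ points toward the equilibrium $1$. Since $\rho_1\le\rho_0\le\rho_2$, the equilibrium interval can be assumed to satisfy $\rho_1\le 1\le\rho_2$; otherwise the constraint $\rho_0-1\in H^{m-1}$ forces $\rho_0\to1$ at infinity, so $\rho_1\le1\le\rho_2$ holds automatically. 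The plan is a Stampacchia-type truncation energy argument, which works in the whole space without compactness.

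\textbf{Upper bound.} Set $w=(\rho-\rho_2)_+$. Because $\rho-1\in H^{m-1}$ and $\rho_2\ge1$, we have $w\in L^2$ with $w(0)=0$. Multiply the equation by $w$ and integrate, using the divergence form $\partial_t\rho-a^2\Delta\rho=\dive(\rho\nabla\phi)$:
\[
\frac12\frac{d}{dt}\|w\|_{L^2}^2+a^2\|\nabla w\|_{L^2}^2=-\int \rho\nabla\phi\cdot\nabla w\,dx .
\]
On the support of $w$, write $\rho\nabla\phi\cdot\nabla w=\nabla\phi\cdot\nabla(\tfrac12 w^2+\rho_2 w)$. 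Integrating by parts and using $-\Delta\phi=\rho-1$ gives
\[
-\int\rho\nabla\phi\cdot\nabla w\,dx=-\int(\rho-1)\bigl(\tfrac12 w^2+\rho_2 w\bigr)\,dx\le 0,
\]
since $\rho-1\ge\rho_2-1\ge0$ wherever $w>0$. Hence $\|w(t)\|_{L^2}$ is nonincreasing, so $w\equiv0$ and $\rho\le\rho_2$.

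\textbf{Lower bound.} Set $v=(\rho_1-\rho)_+$, which lies in $L^2$ since $\rho_1\le1$. The same computation with $\rho=\rho_1-v$ on the support of $v$ gives
\[
\frac12\frac{d}{dt}\|v\|_{L^2}^2+a^2\|\nabla v\|_{L^2}^2=\int\rho\nabla\phi\cdot\nabla v\,dx=\int\nabla\phi\cdot\nabla\bigl(\rho_1 v-\tfrac12 v^2\bigr)\,dx=\int(\rho-1)\bigl(\rho_1 v-\tfrac12 v^2\bigr)\,dx .
\]
Here $\rho-1\le\rho_1-1\le0$, but the factor $\rho_1 v-\tfrac12 v^2=v(\rho_1-\tfrac v2)$ requires a sign check. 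As long as $\rho>0$ we have $v<\rho_1$, so $\rho_1-\tfrac v2>0$ and the integrand is $\le0$. Positivity of $\rho$ on $[0,T]$ comes from the local theory, or from a continuity argument starting at $\rho_0\ge\rho_1>0$. Hence $v\equiv0$.

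\textbf{Main obstacle.} The delicate point is justifying the integrations by parts with the nonlocal field $\nabla\phi$, which is only in $L^2$ with $\Delta\phi\in H^{m-1}$, and the truncations, which lie in $H^1$ but not in $C^2$. These are handled by the regularity of the local solution ($\rho-1\in C([0,T];H^{m-1})$, $m-1>d/2$, so $\rho$ is continuous and bounded) and the decay of $w$ and $v$. If that fails, I would fall back on a classical maximum principle for the nondivergence form above, using the bounded coefficient $\nabla\phi$ and a perturbation $\rho_2+\eta e^{t}(1+|x|^2)$ to handle the unbounded domain.
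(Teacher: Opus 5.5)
Your proof is correct and follows the paper's own argument. You test with the truncations $(\rho-\rho_2)_+$ and $(\rho_1-\rho)_+$, rewrite $\rho\nabla w$ as $\nabla(\tfrac12 w^2+\rho_2 w)$ on the support, integrate by parts onto $\Delta\phi$, and use $-\Delta\phi=\rho-1$ to get a sign. You also spell out two points the paper leaves implicit: that $\rho_1\le 1\le\rho_2$ is forced by $\rho_0-1\in H^{m-1}$ (which the sign and $w\in L^2$ rely on), and the lower-bound sign check $\rho_1-\tfrac v2>0$, which the paper covers only by ``similarly'' and which needs positivity of the local solution.
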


\noindent{\bf Proof.}
Set
\[
\tilde{\rho}:=(\rho-\rho_2)_+=\max(\rho-\rho_2,0)\ge 0.
\]
Testing the first equation of \eqref{F1.6} by $\tilde{\rho}$ yields
\begin{align*}
\frac{d}{dt}\|\tilde{\rho}\|_{L^2}^2+2a^2\|\nabla\tilde{\rho}\|_{L^2}^2
&=2\langle\dive(\rho\nabla\phi),\tilde{\rho}\rangle\\
&=-2\langle\rho\nabla\phi,\nabla\tilde{\rho}\rangle\\
&=-2\langle(\rho-\rho_2)\nabla\phi,\nabla\tilde{\rho}\rangle
-2\rho_2\langle\nabla\phi,\nabla\tilde{\rho}\rangle\\
&=-\langle\nabla\phi,\nabla(\tilde{\rho}^2)+2\rho_2\nabla\tilde{\rho}\rangle\\
&=\langle\Delta\phi,\tilde{\rho}^2+2\rho_2\tilde{\rho}\rangle.
\end{align*}
Using the second equation of \eqref{F1.6}, we obtain
\[
\frac{d}{dt}\|\tilde{\rho}\|_{L^2}^2+2a^2\|\nabla\tilde{\rho}\|_{L^2}^2
=-\langle\rho-1,\tilde{\rho}^2+2\rho_2\tilde{\rho}\rangle.
\]
Moreover,
\begin{align*}
-(\rho-1)\big(\tilde{\rho}^2+2\rho_2\tilde{\rho}\big)
&=-(\rho-\rho_2)\big(\tilde{\rho}^2+2\rho_2\tilde{\rho}\big)
-(\rho_2-1)\big(\tilde{\rho}^2+2\rho_2\tilde{\rho}\big)\\
&=-\tilde{\rho}\big(\tilde{\rho}^2+2\rho_2\tilde{\rho}\big)
-(\rho_2-1)\big(\tilde{\rho}^2+2\rho_2\tilde{\rho}\big)\\
&\le0.
\end{align*}
Hence
\[
\frac{d}{dt}\|\tilde{\rho}\|_{L^2}^2+2a^2\|\nabla\tilde{\rho}\|_{L^2}^2\le 0.
\]
Integrating over $[0,t]$ and using $\tilde{\rho}|_{t=0}=0$ gives
$\|\tilde{\rho}(t)\|_{L^2}=0$, so $\rho(t,x)\le \rho_2$.

Similarly, using $(\rho-\rho_1)_-=\min(\rho-\rho_1,0)\le 0$ as a test function,
we obtain $\rho(t,x)\ge \rho_1$.
\hfill$\Box$


\begin{lemma}\label{L2.2}
($L^2$ estimate) For all $t\in [0,T]$, it holds
\begin{equation}\label{F2.6}
\|\rho(t)-1\|_{L^2}^2+\int_0^t\|\rho(t')-1\|_{H^1}^2\,dt'\le  C\|\rho_0-1\|_{L^2}^2.
\end{equation}
\end{lemma}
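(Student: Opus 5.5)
I would test the first equation of \eqref{F1.6} against $\rho-1$ (setting $n:=\rho-1$), so that
\[
\pt n-a^2\Delta n=\dive(\rho\nabla\phi),\qquad -\Delta\phi=n .
\]
Testing by $n$ and integrating by parts gives
\[
\frac12\frac{d}{dt}\|n\|_{L^2}^2+a^2\|\nabla n\|_{L^2}^2=-\langle \rho\nabla\phi,\nabla n\rangle .
\]
The key step is to rewrite the right-hand side using the Poisson equation. Split $\rho=1+n$.

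For the linear part,
\[
-\langle\nabla\phi,\nabla n\rangle=\langle\Delta\phi,n\rangle=-\|n\|_{L^2}^2 .
\]
This is a damping term, and it is the source of the exponential decay.

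For the nonlinear part, note that $n\nabla n=\frac12\nabla(n^2)$. Hence
\[
-\langle n\nabla\phi,\nabla n\rangle=-\tfrac12\langle\nabla\phi,\nabla(n^2)\rangle=\tfrac12\langle\Delta\phi,n^2\rangle=-\tfrac12\int n^3\,dx .
\]
Putting these together,
\[
\frac12\frac{d}{dt}\|n\|_{L^2}^2+a^2\|\nabla n\|_{L^2}^2+\int\Big(1+\frac n2\Big)n^2\,dx=0 .
\]

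Now use the maximum principle, Lemma \ref{L2.1}. It gives $\rho\ge\rho_1>0$, so $1+\frac n2=\frac{1+\rho}{2}\ge\frac12$. This is precisely where the large-data issue disappears: the cubic term has a good sign after combining with the linear damping, with no smallness needed. Therefore
\[
\frac{d}{dt}\|n\|_{L^2}^2+2a^2\|\nabla n\|_{L^2}^2+\|n\|_{L^2}^2\le0 .
\]
Integrating over $[0,t]$ gives \eqref{F2.6} with $C$ depending only on $a$. A Gronwall step would even give exponential decay, which will be useful later.

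The main obstacle is justifying the integrations by parts with the nonlocal field. I need $\nabla\phi\in L^2$, or at least the pairings $\langle\nabla\phi,\nabla(n^2)\rangle$ and $\langle\nabla\phi,\nabla n\rangle$ to be meaningful.

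For $d\ge3$, I would use $\nabla\phi=\nabla\Lambda^{-2}n\in L^{2d/(d-2)}$ by the embedding \eqref{F1.120}. Alternatively, I would write $\langle\nabla\phi,\nabla n\rangle=\|n\|_{L^2}^2$ directly in Fourier variables: $\widehat{\nabla\phi}\cdot\overline{\widehat{\nabla n}}=|\widehat n|^2$.

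For $d=1,2$, I would approximate by a frequency cutoff, or use $n\in H^{m-1}\subset L^\infty$ so that $n^2\in L^1\cap H^{m-1}$. The identity $\langle\nabla\Lambda^{-2}n,\nabla(n^2)\rangle=\langle n,n^2\rangle$ holds at the Fourier level whenever both sides are finite, since $\xi\cdot\xi/|\xi|^2=1$.

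With these justifications, the energy identity above is rigorous for the local solution, and the lemma follows.
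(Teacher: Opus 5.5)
Your proposal is correct and is essentially the paper's proof: testing with $\rho-1$ and using $-\Delta\phi=\rho-1$ gives exactly the identity \eqref{F2.5}, namely $\frac{d}{dt}\|\rho-1\|_{L^2}^2+2a^2\|\nabla\rho\|_{L^2}^2+\int_{\mathbb{R}^d}(1+\rho)(\rho-1)^2\,dx=0$, and positivity of $\rho$ closes the estimate. The only difference is bookkeeping: you split $\rho=1+n$, while the paper expands $\dive(\rho\nabla\phi)=\nabla\rho\cdot\nabla\phi+\rho\Delta\phi$. Your remarks on justifying the pairings with the nonlocal field go beyond what the paper writes.
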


\noindent{\bf Proof.}
Multiplying the first equation of \eqref{F1.6} by $2(\rho-1)$ and integrating
over $\mathbb{R}^d$, we obtain
\begin{align*}
\frac{d}{dt}\|\rho-1\|_{L^2}^2+2a^2\|\nabla\rho\|_{L^2}^2
&= 2\int_{\mathbb{R}^d} (\rho-1)\dive (\rho \nabla\phi)\,dx\\
&= 2\int_{\mathbb{R}^d} (\rho-1)\nabla \rho \cdot\nabla\phi\,dx
-2\int_{\mathbb{R}^d} \rho(\rho-1)^2\,dx\\
&= \int_{\mathbb{R}^d} \nabla (\rho-1)^2\cdot\nabla\phi\,dx
-2\int_{\mathbb{R}^d} \rho(\rho-1)^2\,dx\\
&= \int_{\mathbb{R}^d}(\rho-1)^3\,dx
-2\int_{\mathbb{R}^d} \rho(\rho-1)^2\,dx\\
&=-\|\rho-1\|_{L^2}^2-\int_{\mathbb{R}^d}\rho(\rho-1)^2\,dx.
\end{align*}
This yields
\begin{equation}\label{F2.5}
\frac{d}{dt}\|\rho-1\|_{L^2}^2+\|\rho-1\|_{L^2}^2+\int_{\mathbb{R}^d}\rho(\rho-1)^2\,dx
+2a^2\|\nabla \rho\|_{L^2}^2=0.
\end{equation}
Consequently, \eqref{F2.6} holds.
\hfill$\Box$


\begin{lemma}\label{L2.3}
($\dot{H}^1$ estimate) For all $t\in [0,T]$, it holds
\begin{equation}\label{F2.8}
\|\nabla\rho(t)\|^2_{L^2}+\int_0^t\|\nabla \rho(t')\|^2_{H^1}\,dt'\le C\|\rho_0-1\|^2_{H^1}.
\end{equation}
\end{lemma}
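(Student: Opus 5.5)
We will differentiate the equation once and test against $\nabla\rho$, equivalently multiply the first equation of \eqref{F1.6} by $-2\Delta\rho$ and integrate over $\mathbb{R}^d$. Expanding the drift term gives $\dive(\rho\nabla\phi)=\nabla\rho\cdot\nabla\phi+\rho\Delta\phi=\nabla\rho\cdot\nabla\phi-\rho(\rho-1)$. This yields
\[
\frac{d}{dt}\|\nabla\rho\|_{L^2}^2+2a^2\|\Delta\rho\|_{L^2}^2
=-2\int_{\mathbb{R}^d}(\nabla\rho\cdot\nabla\phi)\Delta\rho\,dx+2\int_{\mathbb{R}^d}\rho(\rho-1)\Delta\rho\,dx.
\]
The second term has a favorable structure. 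Integrating by parts, it equals $-2\int(2\rho-1)|\nabla\rho|^2\,dx$. We will use the lower bound $\rho\ge\rho_1$ from Lemma \ref{L2.1}: where $2\rho-1\ge 0$ this term is a damping, and in any case it is bounded by $C\|\nabla\rho\|_{L^2}^2$ with $C$ depending on $\rho_2$. It is therefore integrable in time by Lemma \ref{L2.2}, and we do not need its sign.

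The main obstacle is the cubic nonlocal term $\int(\nabla\rho\cdot\nabla\phi)\Delta\rho$, since $\nabla\phi$ is not bounded a priori. We intend to integrate by parts to remove the second derivative from $\rho$. Writing $\Delta\rho=\partial_j\partial_j\rho$, we have
\[
-2\int\partial_i\rho\,\partial_i\phi\,\partial_j\partial_j\rho\,dx=2\int\partial_j\partial_i\rho\,\partial_i\phi\,\partial_j\rho\,dx+2\int\partial_i\rho\,\partial_i\partial_j\phi\,\partial_j\rho\,dx.
\]
The first term on the right equals $\int\partial_i\phi\,\partial_i|\nabla\rho|^2\,dx=-\int\Delta\phi|\nabla\rho|^2\,dx=\int(\rho-1)|\nabla\rho|^2\,dx$, which is bounded by $(\rho_2+1)\|\nabla\rho\|_{L^2}^2$ thanks to the maximum principle. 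The second term is $2\int\nabla\rho\cdot\nabla^2\phi\,\nabla\rho\,dx$. We will bound it by $2\|\nabla^2\phi\|_{L^2}\|\nabla\rho\|_{L^4}^2$ and then use the Calderón--Zygmund estimate $\|\nabla^2\phi\|_{L^2}\le\|\rho-1\|_{L^2}\le C\|\rho_0-1\|_{L^2}$.

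For the $L^4$ factor we use Gagliardo--Nirenberg (Lemma \ref{Lt9}):
\[
\|\nabla\rho\|_{L^4}^2\le C\|\nabla^2\rho\|_{L^2}^{d/4}\|\nabla\rho\|_{L^2}^{2-d/4}.
\]
Since $d/4<2$ for $d\le 5$ (and the exponent also works for the other dimensions we care about), Young's inequality gives
\[
C\|\rho_0-1\|_{L^2}\|\nabla\rho\|_{L^4}^2\le a^2\|\Delta\rho\|_{L^2}^2+C(\|\rho_0\|)\|\nabla\rho\|_{L^2}^2.
\]
The resulting inequality is
\[
\frac{d}{dt}\|\nabla\rho\|_{L^2}^2+a^2\|\Delta\rho\|_{L^2}^2\le C\|\nabla\rho\|_{L^2}^2,
\]
where $C$ depends on $\rho_1,\rho_2,a,d$ and $\|\rho_0-1\|_{L^2}$.

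To finish, we integrate in time and use $\int_0^t\|\nabla\rho\|_{L^2}^2\,dt'\le C\|\rho_0-1\|_{L^2}^2$ from Lemma \ref{L2.2}. No Gronwall exponential is needed, because the right-hand side is already time-integrable. Combining this with $\|\nabla^2\rho\|_{L^2}=\|\Delta\rho\|_{L^2}$ and with Lemma \ref{L2.2} to control the $L^2$ part of the $H^1$ norm of $\nabla\rho$ yields \eqref{F2.8}. The constant $C$ depends on $\|\rho_0-1\|_{L^2}$, which is allowed since $C$ is only required to be independent of $t$.

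As an alternative for the cubic term, one could instead use $\|\nabla\phi\|_{L^\infty}$ bounds, but the integration-by-parts route above avoids any dimension-dependent control of $\nabla\phi$ itself. This step is the one to check carefully.
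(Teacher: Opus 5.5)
Your energy identity and the first integration by parts of the cubic term are correct. The gap is the bound on $2\int\nabla\rho\cdot\nabla^2\phi\,\nabla\rho\,dx$, which fails in exactly the dimensions $d=4,5$ that the lemma must cover. The Gagliardo--Nirenberg step is where it breaks. By scaling, the correct inequality is
\[
\|\nabla\rho\|_{L^4}^2\le C\|\nabla^2\rho\|_{L^2}^{d/2}\|\nabla\rho\|_{L^2}^{2-d/2},
\]
with exponent $d/2$, not $d/4$. It holds only for $d\le 4$: for $d=5$, $H^1\not\hookrightarrow L^4$, so $\|\nabla\rho\|_{L^4}$ is not controlled by $\|\nabla\rho\|_{H^1}$ at all.

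Consequently, Young's inequality absorbs $C\|\rho-1\|_{L^2}\|\nabla\rho\|_{L^4}^2$ into $a^2\|\Delta\rho\|_{L^2}^2$ only when $d/2<2$, i.e. $d\le 3$. For $d=4$ you are left with $C\|\rho_0-1\|_{L^2}\|\nabla^2\rho\|_{L^2}^2$. This can be absorbed only if $\|\rho_0-1\|_{L^2}$ is small compared with $a^2$, and that smallness is precisely what is unavailable in this large-data setting. A secondary point: even for $d\le3$ your constant depends polynomially on $\|\rho_0-1\|_{L^2}$. The paper's argument gives $C=C(\rho_1,\rho_2,a)$, which is what yields the clean factor $e^{C\|\rho_0-1\|_{H^2}^2}$ in Lemma \ref{L2.4}. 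So your closing remark is backwards: this step is what introduces the dimension restriction.

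The missing idea is to let the maximum principle (Lemma \ref{L2.1}) control $\rho-1$ in $L^\infty$, instead of asking for $\nabla\rho\in L^4$. Integrate by parts once more in $x_i$, using $\partial_i\rho=\partial_i(\rho-1)$ and $\Delta\partial_j\phi=-\partial_j\rho$:
\[
2\int_{\R^d}\partial_i\rho\,\partial_i\partial_j\phi\,\partial_j\rho\,dx
=2\int_{\R^d}(\rho-1)|\nabla\rho|^2\,dx
-2\int_{\R^d}(\rho-1)\,\partial_i\partial_j\phi\,\partial_i\partial_j\rho\,dx .
\]
The last term is bounded by
\[
2\|\rho-1\|_{L^\infty}\|\nabla^2\phi\|_{L^2}\|\nabla^2\rho\|_{L^2}\le a^2\|\nabla^2\rho\|_{L^2}^2+C(\rho_1,\rho_2,a)\|\rho-1\|_{L^2}^2,
\]
which is time-integrable by Lemma \ref{L2.2}. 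This works in every dimension and with no dependence on the size of the data.

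This is exactly the paper's proof. It writes the equation as $\pt\rho-a^2\Delta\rho+(\rho-1)=\dive((\rho-1)\nabla\phi)$, differentiates in $x_j$, and tests with $\partial_{x_j}\rho$. It then pairs $(\rho-1)\nabla\partial_{x_j}\phi$ (bounded times $L^2$) directly against the dissipation $\nabla\partial_{x_j}\rho$. If you collect your lower-order terms after this extra integration by parts, they reproduce the paper's identity, including the damping term $2\|\nabla\rho\|_{L^2}^2$.

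Alternatively, you could keep your pairing but place $\nabla^2\phi$ in $L^p$ with $p>d/2$ (and $2\le p<\infty$). This uses Calder\'on--Zygmund and $\|\rho-1\|_{L^p}\le\|\rho-1\|_{L^\infty}^{1-2/p}\|\rho-1\|_{L^2}^{2/p}$. That also closes for $d\le 5$, but only with data-dependent constants.
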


\noindent{\bf Proof.}
Note that \eqref{F1.6} yields
\[
\partial_t\partial_{x_j}\rho-a^2\Delta\partial_{x_j}\rho+\partial_{x_j}\rho
=\dive(\partial_{x_j}\rho\nabla\phi+(\rho-1)\nabla\partial_{x_j}\phi),
\]
and
\begin{align*}
\frac{d}{dt}\|\partial_{x_j}\rho\|_{L^2}^2+2a^2\|\nabla\partial_{x_j}\rho\|_{L^2}^2+2\|\partial_{x_j}\rho\|_{L^2}^2
&=-2\langle\partial_{x_j}\rho\nabla\phi+(\rho-1)\nabla\partial_{x_j}\phi,\nabla\partial_{x_j}\rho\rangle\\
&=-\langle\nabla\phi,\nabla(\partial_{x_j}\rho)^2\rangle
-2\langle (\rho-1) \nabla\partial_{x_j}\phi,\nabla\partial_{x_j}\rho\rangle\\
&=-\langle\rho-1,(\partial_{x_j}\rho)^2\rangle
-2\langle (\rho-1) \nabla\partial_{x_j}\phi,\nabla\partial_{x_j}\rho\rangle.
\end{align*}
Clearly,
\[
|\langle\rho-1,(\partial_{x_j}\rho)^2\rangle|
\le (1+\rho_2)\|\partial_{x_j}\rho\|_{L^2}^2
\le C\|\rho-1\|^2_{H^1}.
\]
By the Poisson equation,
\[
\|\nabla\partial_{x_j}\phi\|_{L^2}\le C\|\rho-1\|_{L^2},
\]
hence
\[
2|\langle (\rho-1)\nabla\partial_{x_j}\phi,\nabla\partial_{x_j}\rho\rangle|
\le C\|\rho-1\|_{L^2}\|\nabla\partial_{x_j}\rho\|_{L^2}
\le C\|\rho-1\|_{L^2}^2+a^2\|\nabla\partial_{x_j}\rho\|_{L^2}^2.
\]
Therefore,
\[
\frac{d}{dt}\|\partial_{x_j}\rho\|_{L^2}^2+a^2\|\nabla\partial_{x_j}\rho\|_{L^2}^2+2\|\partial_{x_j}\rho\|_{L^2}^2\le C\|\rho-1\|^2_{H^1},
\]
and summing in $j$ gives
\begin{equation}\label{F2.7}
\frac{d}{dt}\|\nabla\rho\|_{L^2}^2+\min\{2,a^2\}\|\nabla\rho\|_{H^1}^2\le C\|\rho-1\|^2_{H^1}.
\end{equation}
Integrating \eqref{F2.7} over time and using \eqref{F2.6}, we arrive at \eqref{F2.8}.
\hfill$\Box$


\begin{lemma}\label{L2.4}
(Higher-order estimates)
Let $1\le d\le 5$ and $2\le k\le m-1$. Then, for all $t\in[0,T]$, it holds that
\begin{equation}\label{F2.9}
\|\rho(t)-1\|_{\dot H^k}^2
+\int_0^t \|\nabla \rho(t')\|_{\dot H^k}^2\,dt'
\le
e^{C\|\rho_0-1\|_{H^2}^2}
\|\rho_0-1\|_{\dot H^k}^2.
\end{equation}
\end{lemma}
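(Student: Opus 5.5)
We apply $\partial^\alpha$ with $|\alpha|=k$ to the equation written as
\[
\partial_t\rho-a^2\Delta\rho+(\rho-1)=\nabla\rho\cdot\nabla\phi-(\rho-1)^2 .
\]
This form follows from $\dive(\rho\nabla\phi)=\nabla\rho\cdot\nabla\phi-\rho(\rho-1)$ and $\rho(\rho-1)=(\rho-1)+(\rho-1)^2$. Testing with $\partial^\alpha\rho$ gives
\[
\frac12\frac{d}{dt}\|\partial^\alpha\rho\|_{L^2}^2+a^2\|\nabla\partial^\alpha\rho\|_{L^2}^2+\|\partial^\alpha\rho\|_{L^2}^2=\langle\partial^\alpha(\nabla\rho\cdot\nabla\phi),\partial^\alpha\rho\rangle-\langle\partial^\alpha(\rho-1)^2,\partial^\alpha\rho\rangle .
\]

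\textbf{The transport term.} Split $\partial^\alpha(\nabla\phi\cdot\nabla\rho)=\nabla\phi\cdot\nabla\partial^\alpha\rho+[\partial^\alpha,\nabla\phi\cdot]\nabla\rho$. For the leading part, integrate by parts and use the Poisson equation:
\[
\langle\nabla\phi\cdot\nabla\partial^\alpha\rho,\partial^\alpha\rho\rangle=\tfrac12\langle\rho-1,|\partial^\alpha\rho|^2\rangle\le C\|\partial^\alpha\rho\|_{L^2}^2 ,
\]
since $\rho$ is bounded by Lemma~\ref{L2.1}. For the commutator, I would rather avoid \eqref{F1.14}, because it needs $\|\nabla\rho\|_{L^\infty}$. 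Instead, move one derivative onto the test function:
\[
\langle\partial^\alpha(\nabla\rho\cdot\nabla\phi),\partial^\alpha\rho\rangle=-\langle\partial^{\alpha-e_j}(\nabla\rho\cdot\nabla\phi),\partial_j\partial^\alpha\rho\rangle .
\]
By Cauchy--Schwarz it then suffices to bound $\|\nabla\rho\cdot\nabla\phi\|_{\dot H^{k-1}}$, absorbing $\frac{a^2}{2}\|\nabla\partial^\alpha\rho\|^2$.

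\textbf{Bounding $\|\nabla\rho\cdot\nabla\phi\|_{\dot H^{k-1}}$.} Use \eqref{F1.1311}:
\[
\|\nabla\rho\cdot\nabla\phi\|_{\dot H^{k-1}}\lesssim\|\nabla\phi\|_{L^\infty}\|\rho-1\|_{\dot H^k}+\|\nabla\rho\|_{L^\infty}\|\nabla\phi\|_{\dot H^{k-1}} .
\]
The Poisson equation gives $\|\nabla\phi\|_{\dot H^{k-1}}\sim\|\rho-1\|_{\dot H^{k-2}}$. The squared first term is then $g(t)\|\rho-1\|_{\dot H^k}^2$, a Gronwall form with $g=\|\nabla\phi\|_{L^\infty}^2$. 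For $d\le5$, Gagliardo--Nirenberg (Lemma~\ref{Lt9}) gives
\[
\|\nabla\phi\|_{L^\infty}\lesssim\|\nabla\phi\|_{\dot H^{s_1}}^{\theta}\|\nabla\phi\|_{\dot H^{s_2}}^{1-\theta}\quad\text{with } s_1<\tfrac d2<s_2\le 3 .
\]
Here $\|\nabla\phi\|_{\dot H^{s}}\sim\|\rho-1\|_{\dot H^{s-1}}$ with $s-1\le2$. Hence
\[
\|\nabla\phi\|_{L^\infty}\lesssim\|\rho-1\|_{L^2}+\|\rho-1\|_{\dot H^2}\quad\text{for } d\le 5,
\]
because $\tfrac d2<3$. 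Lemmas \ref{L2.2}--\ref{L2.3} give $\int_0^t g\,dt'\le C\|\rho_0-1\|_{H^1}^2$, which is where $d\le5$ is used.

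The second term, $\|\nabla\rho\|_{L^\infty}\|\rho-1\|_{\dot H^{k-2}}$, I would interpolate between low and top norms. For example, write $\|\nabla\rho\|_{L^\infty}\lesssim\|\rho-1\|_{\dot H^{k}}^{\beta}\|\nabla\rho\|_{\dot H^k}^{\gamma}\cdots$, or simply bound it by $\|\rho-1\|_{\dot H^{k-2}}^2\|\nabla\rho\|_{L^\infty}^2$. The factor $\|\rho-1\|_{\dot H^{k-2}}$ is controlled by the induction hypothesis (the case $k=2$ is covered by Lemma~\ref{L2.2}). When $k$ is not large enough for Sobolev embedding, $\|\nabla\rho\|_{L^\infty}^2$ is handled by Gagliardo--Nirenberg between $\dot H^{k-1}$, $\dot H^{k}$ and $\dot H^{k+1}$, with Young's inequality absorbing the $\dot H^{k+1}$ part.

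\textbf{The quadratic term and closing the induction.} For $-(\rho-1)^2$, use $|\langle\partial^\alpha(\rho-1)^2,\partial^\alpha\rho\rangle|\le\|(\rho-1)^2\|_{\dot H^{k-1}}\|\nabla\partial^\alpha\rho\|$. By \eqref{F1.1311} and the maximum principle, $\|(\rho-1)^2\|_{\dot H^{k-1}}\lesssim\|\rho-1\|_{\dot H^{k-1}}$. This reduces to lower-order quantities already controlled by induction on $k$ starting from Lemma~\ref{L2.3}; alternatively, interpolate $\dot H^{k-1}$ between $\dot H^{k-2}$ and $\dot H^k$, producing a Gronwall term with an integrable coefficient. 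Collecting everything gives
\[
\frac{d}{dt}\|\rho-1\|_{\dot H^k}^2+a^2\|\nabla\rho\|_{\dot H^k}^2\le C\,g(t)\,\|\rho-1\|_{\dot H^k}^2+(\text{integrable lower-order terms}),
\]
with $\int_0^\infty g\le C\|\rho_0-1\|_{H^2}^2$. Gronwall then yields \eqref{F2.9}.

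\textbf{Main obstacle.} The key difficulty is the commutator term involving $\|\nabla\rho\|_{L^\infty}$ and $\|\nabla\phi\|_{L^\infty}$: the coefficient must be time-integrable with bounds depending only on $\|\rho_0-1\|_{H^2}$ and not on $\dot H^k$. I would first try the integration-by-parts trick that shifts one derivative onto the test function, combined with Gagliardo--Nirenberg. The restriction $d\le5$ is what guarantees $L^\infty$ control of $\nabla\phi$ from $L^2_tH^2$ of $\rho-1$. For larger $k$, $\|\nabla\rho\|_{L^\infty}$ can be bounded via $\|\rho-1\|_{H^{k}}$ itself only after the lower levels are known, which is why the argument proceeds by induction on $k$.
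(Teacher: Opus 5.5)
There is a genuine gap, and it comes from giving up the divergence structure of the drift term.

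Once you move a derivative onto the test function and bound $\|\nabla\rho\cdot\nabla\phi\|_{\dot H^{k-1}}$ by \eqref{F1.1311}, you reintroduce exactly the factor you wanted to avoid: $\|\nabla\rho\|_{L^\infty}\|\nabla\phi\|_{\dot H^{k-1}}$. At the base step $k=2$ the highest norms you have are $\|\rho-1\|_{\dot H^2}$ (the quantity being estimated) and the dissipation $\|\rho-1\|_{\dot H^3}$. A Gagliardo--Nirenberg bound of $\|\nabla\rho\|_{L^\infty}$ through $\dot H^{k-1},\dot H^{k},\dot H^{k+1}$ norms of $\rho-1$ requires $k+1>1+\frac d2$, i.e. $k>\frac d2$. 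This fails for $k=2$ when $d=4,5$, so your induction cannot start in those dimensions.

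Separately, your bound $\|\nabla\phi\|_{L^\infty}\lesssim\|\rho-1\|_{L^2}+\|\rho-1\|_{\dot H^2}$ needs $s_1\ge 1$ (to avoid $\dot H^{-1}$ norms of $\rho-1$) as well as $s_1<\frac d2$, hence $d\ge3$. For $d=1,2$ the symbol $i\xi/|\xi|^2$ is too singular at $\xi=0$. For instance, in $d=1$, $\partial_x\phi$ is an antiderivative of $-(\rho-1)$, which can be unbounded for $\rho-1\in H^{s}$ not in $L^1$. So your Gronwall coefficient $g=\|\nabla\phi\|_{L^\infty}^2$ is not controlled by $\|\rho_0-1\|_{H^2}$ there.

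As written, the argument therefore closes only for $d=3$. Even then, your inhomogeneous ``integrable lower-order terms'' produce a right-hand side involving lower-order norms of $\rho_0-1$, not the pure $\|\rho_0-1\|_{\dot H^k}^2$ of \eqref{F2.9}.

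The paper avoids both problems by keeping the conservative form $\partial_t\rho-a^2\Delta\rho+(\rho-1)=\dive\big((\rho-1)\nabla\phi\big)$. It splits $\partial_{x_j}\partial^\alpha\big((\rho-1)\partial_{x_j}\phi\big)=(\partial_{x_j}\partial^\alpha\rho)\partial_{x_j}\phi+\big[\partial_{x_j}\partial^\alpha\big((\rho-1)\partial_{x_j}\phi\big)-(\partial_{x_j}\partial^\alpha\rho)\partial_{x_j}\phi\big]$.
\begin{itemize}
\item The first piece is removed by integration by parts and the Poisson equation, leaving $\langle\rho-1,(\partial^\alpha\rho)^2\rangle$; no size of $\nabla\phi$ is needed.
\item For the commutator, \eqref{F1.14} at order $k+1$ with $u=\partial_{x_j}\phi$, $v=\rho-1$ puts the $L^\infty$ norms only on $\rho-1$ (maximum principle) and on $\nabla^2\phi$.
\item Then $\|\nabla^2\phi\|_{L^\infty}\lesssim\|\rho-1\|_{H^3}$ because $H^3\hookrightarrow L^\infty$ for $d\le5$. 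This, not an $L^\infty$ bound on $\nabla\phi$, is where $d\le5$ enters.
\end{itemize}
The result is the homogeneous inequality \eqref{maomao} with coefficient $\|\rho-1\|_{H^3}$. At $k=2$ its $\dot H^3$ part is absorbed into the dissipation by Young, leaving coefficients integrable by Lemmas \ref{L2.2}--\ref{L2.3}. For $k\ge3$ the coefficient $\|\rho-1\|_{H^3}^2$ is integrable by the $k=2$ bound.

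Your first instinct (transport part plus commutator) was the right one. The problem is that you applied it to $\nabla\rho\cdot\nabla\phi$, whose commutator needs $\|\nabla\rho\|_{L^\infty}$, rather than to $(\rho-1)\nabla\phi$ inside the divergence.
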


\begin{proof}
Applying $\partial^\alpha$ with $|\alpha|=k$ to \eqref{F1.6}, we obtain
\begin{align*}
\partial_t \partial^\alpha \rho
-a^2 \Delta \partial^\alpha \rho
+\partial^\alpha \rho
&=\partial^\alpha \operatorname{div}\big((\rho-1)\nabla \phi\big) \\
&=\sum_{j=1}^d (\partial_{x_j}\partial^\alpha \rho)\,\partial_{x_j}\phi
+\sum_{j=1}^d \Bigl[
\partial_{x_j}\partial^\alpha\big((\rho-1)\,\partial_{x_j}\phi\big)
-(\partial_{x_j}\partial^\alpha \rho)\,\partial_{x_j}\phi
\Bigr].
\end{align*}
Taking the $L^2$ inner product with $\partial^\alpha\rho$, we deduce
\begin{align*}
\frac{d}{dt}\|\partial^\alpha \rho\|_{L^2}^2
+2a^2\|\nabla \partial^\alpha \rho\|_{L^2}^2
+2\|\partial^\alpha \rho\|_{L^2}^2
= I_{1,\alpha}+I_{2,\alpha},
\end{align*}
where
\begin{align*}
I_{1,\alpha}
&:=2\sum_{j=1}^d
\langle
(\partial_{x_j}\partial^\alpha \rho)\,\partial_{x_j}\phi,
\partial^\alpha \rho
\rangle,\\
I_{2,\alpha}&
:=2\sum_{j=1}^d
\langle
\partial_{x_j}\partial^\alpha\big((\rho-1)\,\partial_{x_j}\phi\big)
-(\partial_{x_j}\partial^\alpha \rho)\,\partial_{x_j}\phi,
\partial^\alpha \rho
\rangle.
\end{align*}

We first estimate $I_{1,\alpha}$. By integration by parts and the Poisson equation $-\Delta\phi=\rho-1$, we have
\begin{align*}
I_{1,\alpha}
&=\sum_{j=1}^d
\langle
\partial_{x_j}\phi,
\partial_{x_j}\big((\partial^\alpha \rho)^2\big)
\rangle
=-\langle
\Delta\phi,
(\partial^\alpha \rho)^2
\rangle 
\le
\|\rho-1\|_{L^\infty}
\|\partial^\alpha \rho\|_{L^2}^2.
\end{align*}
Choose $s>\frac d2$. Since $H^s\hookrightarrow L^\infty$, it follows that
\begin{equation}\label{I1-bound}
|I_{1,\alpha}|
\le
C\|\rho-1\|_{H^s}
\|\partial^\alpha \rho\|_{L^2}^2.
\end{equation}

Next, we estimate $I_{2,\alpha}$. By the commutator estimate \eqref{F1.14},
for $|\alpha|=k$,
\begin{align*}
&\sum_{j=1}^d
\left\|
\partial_{x_j}\partial^\alpha\big((\rho-1)\,\partial_{x_j}\phi\big)
-(\partial_{x_j}\partial^\alpha \rho)\,\partial_{x_j}\phi
\right\|_{L^2} \\
&\quad\le
C\Big(
\|\nabla^2\phi\|_{L^\infty}\|\rho-1\|_{\dot H^k}
+\|\nabla^2\phi\|_{\dot H^k}\|\rho-1\|_{L^\infty}
\Big)\\
&\quad\le
C\|\rho-1\|_{H^3}\|\rho-1\|_{\dot H^k},
\end{align*}
where we used $d\le5$, $H^3\hookrightarrow L^\infty$, and
$\|\nabla^2\phi\|_{\dot H^k}\le C\|\rho-1\|_{\dot H^k}$. Therefore,
\begin{equation}\label{I2-bound}
|I_{2,\alpha}|
\le
C
\|\rho-1\|_{H^3}
\|\rho-1\|_{\dot H^k}
\|\partial^\alpha \rho\|_{L^2}.
\end{equation}

Summing \eqref{I1-bound} and \eqref{I2-bound} over all multi-indices $|\alpha|=k$, we infer that
\begin{align}\label{maomao}
&\frac{d}{dt}\|\rho-1\|_{\dot H^k}^2
+2a^2\|\nabla\rho\|_{\dot H^k}^2
+2\|\rho-1\|_{\dot H^k}^2 \notag\\
&\qquad\le
C\|\rho-1\|_{H^3}\|\rho-1\|_{\dot H^k}^2.
\end{align}

We first consider the case $k=2$. Integrating \eqref{maomao} over $[0,t]$,
we obtain
\begin{align*}
&\|\rho(t)-1\|_{\dot H^2}^2
+2a^2\int_0^t \|\nabla\rho(t')\|_{\dot H^2}^2\,dt'
+2\int_0^t \|\rho(t')-1\|_{\dot H^2}^2\,dt' \\
&\qquad\le
\|\rho_0-1\|_{\dot H^2}^2
+C\int_0^t
\|\rho(t')-1\|_{H^3}\|\rho(t')-1\|_{\dot H^2}^2\,dt'.
\end{align*}
Since
\[
\|\rho-1\|_{H^3}
\le C\|\rho-1\|_{H^2}+C\|\rho-1\|_{\dot H^3},
\]
Young's inequality gives, for any $\eta>0$,
\begin{align*}
C\|\rho-1\|_{H^3}\|\rho-1\|_{\dot H^2}^2
&\le
C\|\rho-1\|_{H^2}\|\rho-1\|_{\dot H^2}^2
+\eta\|\rho-1\|_{\dot H^3}^2
+C_\eta\|\rho-1\|_{\dot H^2}^4.
\end{align*}
Taking $\eta>0$ sufficiently small and using the previously established
lower-order estimates \eqref{F2.6} and \eqref{F2.8}, together with Gr\"onwall's inequality,
we deduce
\begin{align}\label{3.131}
\|\rho(t)-1\|_{\dot H^2}^2
+\int_0^t \|\nabla\rho(t')\|_{\dot H^2}^2\,dt'
\le
Ce^{C\|\rho_0-1\|_{H^2}^2}
\|\rho_0-1\|_{\dot H^2}^2.
\end{align}

We now turn to the case $3\le k\le m-1$. Returning to \eqref{maomao}, we obtain
\begin{align*}
&\|\rho(t)-1\|_{\dot H^k}^2
+2a^2\int_0^t \|\nabla\rho(t')\|_{\dot H^k}^2\,dt'
+2\int_0^t \|\rho(t')-1\|_{\dot H^k}^2\,dt' \\
&\qquad\le
\|\rho_0-1\|_{\dot H^k}^2
+C\int_0^t \|\rho(t')-1\|_{H^3}\|\rho(t')-1\|_{\dot H^k}^2\,dt' \\
&\qquad\le
\|\rho_0-1\|_{\dot H^k}^2
+\eta\int_0^t \|\rho(t')-1\|_{\dot H^k}^2\,dt'
+C_\eta\int_0^t \|\rho(t')-1\|_{H^3}^2\|\rho(t')-1\|_{\dot H^k}^2\,dt'.
\end{align*}
Taking $\eta>0$ sufficiently small and using \eqref{F2.6}, \eqref{F2.8} and \eqref{3.131}, we infer from Gr\"onwall's inequality that \eqref{F2.9} holds. This completes the proof.
\end{proof}

\noindent
{\emph{\textbf{Proof of Theorem \ref{theorem1.2}}}}.
Let $\rho$ be a local solution to \eqref{F1.6} on $[0,T]\times\mathbb{R}^d$. Then Lemmas \ref{L2.1}--\ref{L2.4} provide uniform a priori estimates, which prevent any blow-up of the solution norm in the regularity class under consideration. By the standard continuation argument, the local solution can therefore be extended globally in time. This proves the global existence part of Theorem \ref{theorem1.2}.

We next establish the exponential decay estimate \eqref{exp1}. Taking a sufficiently large multiple of \eqref{F2.5} and adding it to
\eqref{F2.7}, we find a positive constant $c_1$ such that
\begin{align}\label{H1exp}
\frac{d}{dt}\|\rho-1\|_{H^1}^2+2c_1\|\rho-1\|_{H^2}^2\le 0.
\end{align}
Multiplying \eqref{H1exp} by $e^{c_1 t}$ and integrating over $[0,t]$, we obtain
\begin{align}\label{exppp}
e^{c_1 t}\|\rho(t)-1\|_{H^1}^2
+c_1\int_0^t e^{c_1 t'}\|\rho(t')-1\|_{H^2}^2\,dt'
\le \|\rho_0-1\|_{H^1}^2.
\end{align}
If $m-1\ge 2$, then taking $k=m-1$ in \eqref{maomao} and using Young's inequality on the right-hand side of \eqref{maomao}, we have
\begin{align*}
\frac{d}{dt}\|\rho-1\|_{\dot H^k}^2
+2a^2\|\nabla\rho\|_{\dot H^k}^2
+\|\rho-1\|_{\dot H^k}^2\le
C\|\rho-1\|_{H^3}^2
\|\rho-1\|_{\dot H^k}^2.
\end{align*}
For some small constant $c_2>0$, multiplying the above inequality by
\[
e^{c_2t-C\int_0^t \|\rho(t')-1\|_{H^{3}}^2\,dt'},
\]
we infer that
\begin{align*}
&\frac{d}{dt}\bigg(
e^{c_2t-C\int_0^t \|\rho(t')-1\|_{H^{3}}^2\,dt'}
\|\rho-1\|_{\dot H^k}^2
\bigg)
+c_2 e^{c_2 t-C\int_0^t\|\rho(t')-1\|_{H^{3}}^2\,dt'}
\|\rho-1\|_{\dot H^k}^2
\le 0.
\end{align*}
Integrating the above inequality over $[0,t]$, and then combining the resulting estimate with \eqref{exppp}, we obtain \eqref{exp1}.

Finally, we derive the regularity and decay estimate for the electric field under the extra assumption $\nabla\phi_0\in L^2$. Taking the $L^2$ inner product of \eqref{DDp} with $\Lambda^{-2}(\rho-1)$ yields
\begin{equation}\label{D1}
\begin{aligned}
&\frac{d}{dt}\|\Lambda^{-1}(\rho-1)\|_{L^2}^2
+2a^2\|\Lambda^{-1}\nabla(\rho-1)\|_{L^2}^2 \\
&\qquad
=-2\int_{\mathbb{R}^d}\rho\,\nabla\phi\cdot\Lambda^{-2}\nabla(\rho-1)\,dx
=-2\int_{\mathbb{R}^d}\rho\,|\nabla\phi|^2\,dx
\le -2\rho_1\|\nabla\phi\|_{L^2}^2,
\end{aligned}
\end{equation}
where we have used the lower bound in \eqref{F2.4}. Since
\[
\|\Lambda^{-1}(\rho-1)\|_{L^2}\sim \|\nabla\phi\|_{L^2},
\qquad
\|\Lambda^{-1}\nabla(\rho-1)\|_{L^2}\sim \|\rho-1\|_{L^2},
\]
it follows from \eqref{D1} that \eqref{1.19} holds. This completes the proof of Theorem \ref{theorem1.2}.

\section{A priori estimates for the Euler--Poisson system}\label{section:EP1}

We establish uniform a priori estimates for solutions to the Cauchy problem \eqref{Euler}--\eqref{Eulerd} on $[0,T_\eps)$, where $T_\eps=T_{\eps}(\eps,\rho_0, q_0)$ is the maximal time of local well-posedness. 

Let $\tilde\rho_2\ge \tilde\rho_1>0$ be two constants such that
\begin{align}\label{assumupperlow}
\tilde\rho_1\le \rho(t,x)\le \tilde\rho_2,\quad \forall (t,x)\in[0,T_\eps)\times\R^d.
\end{align}
To justify certain Sobolev estimates (in particular those involving highest-order derivatives), a standard regularization procedure is required. 
For simplicity of presentation, we shall always assume that $(\rho,v,\phi)$ is a smooth solution to the Cauchy problem \eqref{Euler}--\eqref{Eulerd}.

The proof is divided into five steps.

\subsection{\texorpdfstring{$L^2$}{L2} energy estimate}

We first establish an $L^2$ bound on the solution. 
\begin{lemma}\label{lem:L2}
Under the assumption \eqref{assumupperlow}, we have for all $t\in [0,T_\eps)$ that
\begin{align}\label{L2}
\sup_{t'\in[0,t]} \|(\rho-1,q,\nabla\phi)(t')\|_{L^2}^2+\frac{1}{\eps}\int_0^t \|q(t')\|_{L^2}^2\,dt'\le C \|(\rho_0-1,q_0,\nabla\phi_0)\|_{L^2}^2.
\end{align}
\end{lemma}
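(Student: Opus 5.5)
We use the physical entropy (relative energy) of the isothermal Euler--Poisson system. Set
\[
\mathcal{E}(t):=\int_{\mathbb{R}^d}\Big(\frac{|q|^2}{2\rho}+a^2\big(\rho\ln\rho-\rho+1\big)+\frac12|\nabla\phi|^2\Big)\,dx .
\]
The claim is that $\mathcal{E}$ satisfies the exact dissipation identity
\[
\frac{d}{dt}\mathcal{E}(t)+\frac{1}{\eps}\int_{\mathbb{R}^d}\frac{|q|^2}{\rho}\,dx=0 .
\]
To derive it, we multiply the momentum equation by $q/\rho$ and the mass equation by $-\frac{|q|^2}{2\rho^2}+a^2\ln\rho$. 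The convective terms and the pressure term then combine into a total divergence, exactly as for the isothermal Euler equations with $h(\rho)=a^2\ln\rho$. This leaves the Poisson contribution $-\int q\cdot\nabla\phi\,dx$ and the damping term $-\frac1\eps\int|q|^2/\rho\,dx$.

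For the Poisson term, integration by parts and the mass equation give
\[
-\int q\cdot\nabla\phi\,dx=\int \dive q\,\phi\,dx=-\int \partial_t\rho\,\phi\,dx .
\]
Since $\partial_t\rho=-\Delta\partial_t\phi$ from $-\Delta\phi=\rho-1$, this equals
\[
\int\Delta\partial_t\phi\,\phi\,dx=-\frac12\frac{d}{dt}\|\nabla\phi\|_{L^2}^2 ,
\]
which is the electric energy term. Every integration by parts is justified for smooth solutions with $(\rho-1,q)\in H^m$ and $\nabla\phi\in L^2$. For the $\int\phi\,\dive q$ step one can instead argue directly with $\nabla\phi=\nabla\Lambda^{-2}(\rho-1)$, or approximate by a regularization, as the paper allows.

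Integrating the identity in time gives
\[
\mathcal{E}(t)+\frac1\eps\int_0^t\int\frac{|q|^2}{\rho}\,dx\,dt'=\mathcal{E}(0).
\]
Under \eqref{assumupperlow}, the function $g(\rho)=\rho\ln\rho-\rho+1$ satisfies $g(\rho)\sim(\rho-1)^2$ on $[\tilde\rho_1,\tilde\rho_2]$, by Taylor's formula with $g''(\rho)=1/\rho$. Also $\frac{|q|^2}{\rho}\sim|q|^2$ there. Hence $\mathcal{E}(t)\sim\|(\rho-1,q,\nabla\phi)(t)\|_{L^2}^2$ and the damping integral is bounded below by $\tilde\rho_2^{-1}\|q\|_{L^2}^2$.

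At $t=0$ we use instead $\rho_0\in[\rho_1,\rho_2]$ to get $\mathcal{E}(0)\le C\|(\rho_0-1,q_0,\nabla\phi_0)\|_{L^2}^2$. Combining these bounds gives \eqref{L2}, with $C$ depending only on $a,\rho_1,\rho_2,\tilde\rho_1,\tilde\rho_2$.

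The only delicate point is checking that the Poisson coupling yields the exact time derivative of $\frac12\|\nabla\phi\|_{L^2}^2$ with no leftover terms, and that the boundary terms vanish in the whole space. Both are routine given the decay class of the solution.
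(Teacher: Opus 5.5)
Your proposal is correct and follows the paper's proof essentially verbatim. You use the same entropy $\frac12\rho|v|^2+a^2(\rho\ln\rho-\rho+1)$ plus $\frac12|\nabla\phi|^2$, rewrite the Poisson work term as $\frac12\frac{d}{dt}\|\nabla\phi\|_{L^2}^2$ via the mass and Poisson equations in the same way, and close with the same Taylor-expansion equivalence under $\tilde\rho_1\le\rho\le\tilde\rho_2$.
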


\begin{proof}
Introduce the entropy–entropy flux pair
\begin{align}
E(\rho,v)&=\frac12\rho|v|^2+a^2\big(\rho(\ln\rho-1)+1\big),\quad v=\frac{q}{\rho},\\
F(\rho,v)&=\frac12\rho|v|^2 v+a^2\rho v\ln\rho.
\end{align}
Since $\tilde\rho_1\le \rho\le \tilde\rho_2$, a Taylor expansion around $\rho=1$ yields
\begin{align}
\frac{a^2}{2\tilde\rho_2}(\rho-1)^2
\le a^2\big(\rho(\ln\rho-1)+1\big)
\le \frac{a^2}{2\tilde\rho_1}(\rho-1)^2.
\label{entropy-equivalence}
\end{align}
Consequently, one has the equivalence
\begin{align}
\frac{\tilde\rho_1}{2}|v|^2+\frac{a^2}{2\tilde\rho_2}(\rho-1)^2
\le E(\rho,v)
\le \frac{\tilde\rho_2}{2}|v|^2+\frac{a^2}{2\tilde\rho_1}(\rho-1)^2.
\label{energy-equivalence}
\end{align}
For smooth solutions to \eqref{Euler}, it holds that
\begin{align}
\partial_t E(\rho,v)+\dive F(\rho,v)
+\frac1\varepsilon \rho|v|^2+\rho v\cdot \nabla \phi=0.
\label{entropy-identity}
\end{align}
Next, using the continuity equation and the Poisson equation, we compute
\begin{align}
\int_{\R^d} \rho v\cdot \nabla \phi\,dx
&=-\int_{\R^d} \dive(\rho v)\,\phi\,dx
=\int_{\R^d} \partial_t \rho\,\phi\,dx \nonumber\\
&=\int_{\R^d} (-\Delta \partial_t \phi)\,\phi\,dx
=\int_{\R^d} \nabla \partial_t \phi \cdot \nabla \phi\,dx
=\frac12\frac{d}{dt}\int_{\R^d} |\nabla\phi|^2\,dx.
\end{align}
Thus, integrating \eqref{entropy-identity} over $\R^d$, we obtain
\begin{align}\label{3.13}
\frac{d}{dt}\int_{\R^d}\Big(E(\rho,v)+\frac12|\nabla\phi|^2\Big)\,dx
+\frac1\varepsilon \int_{\R^d}\rho|v|^2\,dx=0,
\end{align}
Since $\tilde\rho_1\le \rho\le \tilde\rho_2$, this leads to \eqref{L2}.
\end{proof}

\subsection{\texorpdfstring{$H^{m-1}$}{Hm-1} energy estimate}

Using \eqref{F1.1} and the relation $q=\rho v$, we rewrite system \eqref{Euler} as
\begin{equation}\label{eq:reform}
\left\{
\begin{aligned}
&\partial_t\rho+\dive q=0,\\
&\partial_t q+a^2\nabla\rho+\frac{1}{\varepsilon}q+\nabla\Lambda^{-2}(\rho-1)=f,
\end{aligned}\right.
\end{equation}
where
\begin{equation}\label{def:f-split}
f=f_1+f_2,\qquad 
f_1:=-(\rho-1)\nabla\Lambda^{-2}(\rho-1),\qquad 
f_2:=-\dive(q\otimes v).
\end{equation}

Let $\phi:=\Lambda^{-2}(\rho-1)$. Then one has the identity
\begin{equation}\label{f1-stress}
f_1=(\Delta\phi)\nabla\phi
=\dive\Big(\nabla\phi\otimes\nabla\phi-\frac12|\nabla\phi|^2\mathbf{I}_d\Big).
\end{equation}

\begin{lemma}\label{lem:Hk-refined}
Assume that \eqref{assumupperlow} holds. 
For any $1\le k\le m-1$ and all $t\in[0,T_\eps)$, it holds that
\begin{align}
&a^2\|\rho(t)-1\|_{H^k}^2+\|q(t)\|_{H^k}^2
+\frac{1}{\varepsilon}\int_0^t\|q(t')\|_{H^k}^2\,dt'\nonumber\\
&\le a^2\|\rho_0-1\|_{H^k}^2+\|q_0\|_{H^k}^2
+C\varepsilon\int_0^t \|q(t')\|_{H^m}^2 \|v(t')\|_{H^m}^2 \,dt'\nonumber\\
&\quad+C\varepsilon\int_0^t\|\nabla\phi(t')\|_{L^\infty}^2
\|\rho(t')-1\|_{H^{k}}^2\,dt'.
\label{Hk-energy-int}
\end{align}
\end{lemma}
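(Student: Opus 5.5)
Apply $\partial^\alpha$, $0\le|\alpha|\le k$, to the reformulated system \eqref{eq:reform} and take the $L^2$ inner product of the mass equation with $a^2\partial^\alpha(\rho-1)$ and of the momentum equation with $\partial^\alpha q$. The linear structure of \eqref{eq:reform} is symmetric: the terms $a^2\langle\partial^\alpha\dive q,\partial^\alpha\rho\rangle$ and $a^2\langle\nabla\partial^\alpha\rho,\partial^\alpha q\rangle$ cancel after integration by parts. This leaves
\[
\frac12\frac{d}{dt}\big(a^2\|\partial^\alpha(\rho-1)\|_{L^2}^2+\|\partial^\alpha q\|_{L^2}^2\big)+\frac1\eps\|\partial^\alpha q\|_{L^2}^2
=-\langle\nabla\Lambda^{-2}\partial^\alpha(\rho-1),\partial^\alpha q\rangle+\langle\partial^\alpha f,\partial^\alpha q\rangle .
\]

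\textbf{The linear Poisson term.} The first term on the right I would handle either by Cauchy--Schwarz with $\|\nabla\Lambda^{-2}\partial^\alpha(\rho-1)\|_{L^2}$, or more cleanly through the identity $\partial_t\rho=-\dive q$. Integrating by parts gives
\[
-\langle\nabla\Lambda^{-2}\partial^\alpha(\rho-1),\partial^\alpha q\rangle=\langle\Lambda^{-2}\partial^\alpha(\rho-1),\partial^\alpha\dive q\rangle=-\frac12\frac{d}{dt}\|\Lambda^{-1}\partial^\alpha(\rho-1)\|_{L^2}^2 .
\]
This is a nonnegative energy contribution, namely $\|\partial^\alpha\nabla\phi\|_{L^2}^2$, which can be moved to the left-hand side and then dropped. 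The term is therefore harmless.

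\textbf{The nonlinear terms.} For $f=f_1+f_2$, I bound the inner product by Young's inequality:
\[
\langle\partial^\alpha f,\partial^\alpha q\rangle\le\frac1{2\eps}\|\partial^\alpha q\|_{L^2}^2+\frac\eps2\|\partial^\alpha f\|_{L^2}^2 .
\]
The first piece is absorbed into the relaxation dissipation. For the second piece:
\begin{itemize}
\item Since $k\le m-1$, $f_2=-\dive(q\otimes v)$ satisfies $\|f_2\|_{H^k}\le C\|q\otimes v\|_{H^{m}}\le C\|q\|_{H^m}\|v\|_{H^m}$ by the algebra property \eqref{F1.13}. This gives the term $C\eps\|q\|_{H^m}^2\|v\|_{H^m}^2$.
\item For $f_1=-(\rho-1)\nabla\phi$, use the Moser estimate \eqref{F1.1311}:
\[
\|f_1\|_{H^k}\le C\|\nabla\phi\|_{L^\infty}\|\rho-1\|_{H^k}+C\|\rho-1\|_{L^\infty}\|\nabla\phi\|_{\dot H^k}.
\]
\end{itemize}

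\textbf{Main obstacle.} The main obstacle is the second product in the $f_1$ bound, $\|\rho-1\|_{L^\infty}\|\nabla\phi\|_{\dot H^k}$. It is not of the form $\|\nabla\phi\|_{L^\infty}\|\rho-1\|_{H^k}$, since $\|\nabla\phi\|_{\dot H^k}\sim\|\rho-1\|_{\dot H^{k-1}}$. To get the stated form I would use the stress representation \eqref{f1-stress} and the symmetry $f_1=(\Delta\phi)\nabla\phi$, and distribute derivatives via the commutator estimate \eqref{F1.14}. In the top-order term $\partial^\alpha\Delta\phi\,\nabla\phi$, the $L^\infty$ factor then falls on $\nabla\phi$ and $\|\partial^\alpha\Delta\phi\|_{L^2}\le\|\rho-1\|_{H^k}$. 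The remaining terms have derivatives on $\nabla\phi$ with at most $k-1$ derivatives on $\Delta\phi$; these are bounded by $\|\nabla^2\phi\|_{\dot H^{k-1}}$ times lower-order norms, again via the commutator estimate \eqref{F1.14} applied with $u=\nabla\phi$. Here one needs $\|\nabla^2\phi\|_{L^\infty}=\|\nabla\nabla\Lambda^{-2}(\rho-1)\|_{L^\infty}$, which is not directly controlled by $\|\rho-1\|_{L^\infty}$. If this refinement fails, I would instead accept the bound $C\eps\|\rho-1\|_{H^k}^2(\|\nabla\phi\|_{L^\infty}^2+\|\rho-1\|_{L^\infty}^2)$. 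I would also try to exploit that the $\|\rho-1\|_{L^\infty}$-part can be traded, through interpolation, for $\|\nabla\phi\|_{L^\infty}$-type norms.

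\textbf{Conclusion.} Summing over $|\alpha|\le k$ and integrating in time yields \eqref{Hk-energy-int}, with all constants depending only on $\tilde\rho_1,\tilde\rho_2$ through the bounds on $v=q/\rho$.
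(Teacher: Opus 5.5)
Your energy framework is the paper's: the symmetric cancellation of the $a^2$ terms, the Poisson term rewritten as $-\frac12\frac{d}{dt}\|\Lambda^{-1}\partial^\alpha(\rho-1)\|_{L^2}^2$ via $\partial_t\rho=-\dive q$, Young's inequality against $\frac1\eps\|\partial^\alpha q\|_{L^2}^2$, and $\|f_2\|_{H^k}\le C\|q\|_{H^m}\|v\|_{H^m}$ all match. The gap is the estimate of $f_1$, which you leave unresolved. This step is exactly what fixes the form of \eqref{Hk-energy-int}, namely a Gr\"onwall coefficient involving only $\|\nabla\phi\|_{L^\infty}^2$.

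The commutator route you sketch does fail, for the reason you note: \eqref{F1.14} with $u=\nabla\phi$, $v=\Delta\phi$ puts the $L^\infty$ norm on $\nabla^2\phi$ or on $\rho-1$. Your fallback, with coefficient $\|\nabla\phi\|_{L^\infty}^2+\|\rho-1\|_{L^\infty}^2$, is a different and weaker inequality. Interpolation cannot turn $\|\rho-1\|_{L^\infty}=\|\Delta\phi\|_{L^\infty}$, which carries one more derivative, into a pure $\|\nabla\phi\|_{L^\infty}$ factor. So, as written, the proposal does not prove the lemma.

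The missing step is short: apply \eqref{F1.1311} to the stress tensor in \eqref{f1-stress}, not to $(\rho-1)\nabla\phi$ and not through commutators. Write $f_1=\dive S$ with $S=\nabla\phi\otimes\nabla\phi-\frac12|\nabla\phi|^2\mathbf{I}_d$. Then for $1\le j\le k$,
\[
\|f_1\|_{\dot H^j}\le C\|S\|_{\dot H^{j+1}}\le C\|\nabla\phi\|_{L^\infty}\|\nabla\phi\|_{\dot H^{j+1}}= C\|\nabla\phi\|_{L^\infty}\|\rho-1\|_{\dot H^{j}}.
\]
The second inequality holds because both factors of every product in $S$ are components of $\nabla\phi$, so both terms on the right of \eqref{F1.1311} carry $\|\nabla\phi\|_{L^\infty}$. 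The last identity is Plancherel applied to $\nabla\phi=\nabla\Lambda^{-2}(\rho-1)$. At $j=0$ one simply has $\|f_1\|_{L^2}\le\|\nabla\phi\|_{L^\infty}\|\rho-1\|_{L^2}$. Hence $\|f_1\|_{H^k}\le C\|\nabla\phi\|_{L^\infty}\|\rho-1\|_{H^k}$, which is the paper's argument and gives exactly the last term of \eqref{Hk-energy-int}. The divergence form moves the extra derivative carried by $\rho-1=-\Delta\phi$ outside the product, so nothing other than $\nabla\phi$ ever has to be placed in $L^\infty$.

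A minor point, shared with the stated inequality: when you drop the Poisson energy after integrating in time, its initial value $\|\Lambda^{-1}\partial^\alpha(\rho_0-1)\|_{L^2}^2$ remains on the right. For $\alpha=0$ this is $\|\nabla\phi_0\|_{L^2}^2$.
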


\begin{proof}
Fix $|\alpha|=k$. Applying $\partial^\alpha$ to \eqref{eq:reform}, we get
\[
\partial_t\partial^\alpha \rho+\dive\partial^\alpha q=0,\qquad
\partial_t\partial^\alpha q+a^2\nabla\partial^\alpha \rho+\frac{1}{\varepsilon}\partial^\alpha q+\nabla\Lambda^{-2}\partial^\alpha \rho=\partial^\alpha f.
\]
Taking the $L^2$ inner product with $(a^2\partial^\alpha \rho,\partial^\alpha q)$ yields
\begin{align*}
\frac12\frac{d}{dt}\Big(a^2\|\partial^\alpha \rho\|_{L^2}^2+\|\partial^\alpha q\|_{L^2}^2\Big)
+\frac{1}{\varepsilon}\|\partial^\alpha q\|_{L^2}^2
&=-a^2\langle\dive\partial^\alpha q,\partial^\alpha \rho\rangle
-a^2\langle\nabla\partial^\alpha \rho,\partial^\alpha q\rangle\\
&\quad-\langle\nabla\Lambda^{-2}\partial^\alpha \rho,\partial^\alpha q\rangle
+\langle\partial^\alpha f,\partial^\alpha q\rangle.
\end{align*}
The first two terms cancel by integration by parts. For the Poisson term, we have
\[
-\langle\nabla\Lambda^{-2}\partial^\alpha \rho,\partial^\alpha q\rangle
=\langle\Lambda^{-2}\partial^\alpha \rho,\dive\partial^\alpha q\rangle
=-\langle\Lambda^{-2}\partial^\alpha \rho,\partial_t\partial^\alpha \rho\rangle
=-\frac12\frac{d}{dt}\|\Lambda^{-1}\partial^\alpha \rho\|_{L^2}^2.
\]
Therefore,
\begin{equation}\label{pp1}
\begin{aligned}
&\quad\frac{d}{dt}\Big(a^2\|\partial^\alpha\rho\|_{L^2}^2+\|\partial^\alpha q\|_{L^2}^2
+\|\Lambda^{-1}\partial^\alpha \rho\|_{L^2}^2\Big)
+\frac{2}{\varepsilon}\|\partial^\alpha q\|_{L^2}^2\\[2mm]
&=2\langle\partial^\alpha f,\partial^\alpha q\rangle\\
&\le \frac{1}{\varepsilon}\|\partial^{\alpha}q\|_{L^2}^2
+C\varepsilon\|\partial^{\alpha}f\|_{L^2}^2.
\end{aligned}
\end{equation}

We now estimate $f_1$ and $f_2$ separately. For $f_2$, the Moser-type product estimate \eqref{F1.1311} yields, for $1\le k\le m-1$,
\begin{align*}
\|f_2\|_{H^k}
&\le C\|q\otimes v\|_{H^{k+1}}
\le C\big(\|q\|_{H^{k+1}}\|v\|_{L^\infty}+\|q\|_{L^\infty}\|v\|_{H^{k+1}}\big).
\end{align*}
Since $m>\frac d2+1$ and $k+1\le m$, we have 
\[
\|q\|_{L^\infty}+\|v\|_{L^\infty}\le C\big(\|q\|_{H^m}+\|v\|_{H^m}\big),
\]
which implies
\begin{equation}\label{est-f2-Hk}
\|f_2\|_{H^k}\le C\|q\|_{H^m}\|v\|_{H^m}.
\end{equation}
For $f_1$, using the structure \eqref{f1-stress} together with the Moser-type estimate \eqref{F1.1311}, we have
\begin{align}
\|f_1\|_{H^k}
&\le C \|\nabla\phi\|_{L^\infty}\|\nabla\phi\|_{H^{k+1}}
\le C\|\nabla\phi\|_{L^\infty}\|\rho-1\|_{H^{k}}.
\label{est-f1-Hk}
\end{align}

Combining \eqref{est-f2-Hk} and \eqref{est-f1-Hk} with \eqref{pp1}, we have the following differential inequality 
\begin{equation}
\begin{aligned}\label{ddtHk}
&\quad\frac{d}{dt}\sum_{|\alpha|=k}\Big(a^2\|\partial^\alpha \rho\|_{L^2}^2
+\|\partial^\alpha q\|_{L^2}^2+\|\Lambda^{-1}\partial^\alpha \rho\|_{L^2}^2\Big)
+\frac{1}{\varepsilon}\sum_{|\alpha|=k}\|\partial^\alpha q\|_{L^2}^2\\
&\le C\varepsilon \|q\|_{H^m}^2 \|v\|_{H^m}^2+C\eps \|\nabla\phi\|_{L^\infty}^2\|\rho-1\|_{H^{k}}^2,
\end{aligned}
\end{equation}
which leads to \eqref{Hk-energy-int}.
\end{proof}

\subsection{Dissipation estimate of \texorpdfstring{$\rho$}{rho} and drift--diffusion structure}

From the momentum equation in \eqref{eq:reform}, we discover
\begin{align}\label{diveq}
   \dive q=-\eps\partial_t\dive q-\eps a^2\Delta\rho
-\eps\dive\big(\rho\nabla\Lambda^{-2}(\rho-1)\big)-\eps\dive\dive(q\otimes v).
\end{align}
Putting \eqref{diveq} into the mass equation of \eqref{eq:reform} allows us to obtain a drift--diffusion structure:
\begin{align}\label{4.3}
\partial_t \rho-\eps a^2 \Delta \rho-\eps \dive\big(\rho \nabla \Lambda^{-2}(\rho-1)\big)= \eps\partial_t \dive q+\eps \dive\dive (q\otimes v).
\end{align}

Consequently, in the spirit of Section \ref{sectionDD}, we can overcome the nonlinear terms caused by the Poisson coupling and establish some dissipation estimates of $\rho$. 
\begin{lemma}\label{lemma4.4}
Assume that \eqref{assumupperlow} holds. For all $t\in [0,T_\eps)$, it holds that
\begin{align}
\varepsilon\int_0^t(\|\rho(t')-1\|_{L^2}^2+\|\nabla\phi(t')\|_{L^2}^2)\,dt'
&\leq C\big(\|\rho_0-1\|_{L^2}^2+\|q_0\|_{L^2}^2+\|\nabla\phi_0\|_{L^2}^2\big)\nonumber\\
&\quad +C\varepsilon\int_0^t \|q(t')\|_{H^{m}}^2\|v(t')\|_{H^1}^2\,dt',\label{rhoL21}
\\
\varepsilon \int_0^t \|\nabla \rho(t') \|_{H^1}^2\,dt'
&\leq C\|\rho_0-1\|_{H^1}^2+C\|\dive q_0\|_{H^1}^2\nonumber\\
&\quad+ C\varepsilon^2 \Big(\sup_{t'\in[0,t]}\|\dive q(t')\|_{H^1}^2
+\frac{1}{\varepsilon} \int_0^t \|\dive q(t')\|_{H^1}^2\,dt'\Big)\nonumber
\\
&\quad+\varepsilon\int_0^t \|q(t')\|_{H^{m}}^2\|v(t')\|_{H^{m}}^2\,dt',\label{rhoL22}
\end{align}
and for $2\leq k\leq m-1$,
\begin{equation}
\begin{aligned}\label{nablarhok}
\varepsilon \int_0^t \|\nabla \rho(t') \|_{H^{k}}^2\,dt'
&\leq Ce^{C\varepsilon \int_0^t \|\nabla\phi(t')\|_{L^\infty}^2\,dt'}
\bigg(\|\rho_0-1\|_{H^{k}}^2+\|\dive q_0\|_{H^{k}}^2\\
&\quad+\varepsilon^2 \Big(\sup_{t'\in[0,t]}\| \dive q(t')\|_{H^{k}}^2
+\frac{1}{\varepsilon} \int_0^t \| \dive q(t')\|_{H^{k}}^2\,dt'\Big)
\\
&\quad+\varepsilon\int_0^t\|q(t')\|_{H^{m}}^2  \|v(t')\|_{H^{m}}^2\,dt'\bigg).
\end{aligned}
\end{equation}
\end{lemma}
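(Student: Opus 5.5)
We work with the drift--diffusion form \eqref{4.3} and imitate Lemmas \ref{L2.2}--\ref{L2.4}, treating $\eps\partial_t\dive q+\eps\dive\dive(q\otimes v)$ as a remainder. The time-derivative remainder is the delicate term. We handle it by moving $\eps\partial_t\dive q$ to the left after the test function is applied. This produces a boundary term $\eps\langle \dive q, \cdot\rangle\big|_0^t$, and a term $\eps\langle \dive q,\partial_t(\cdot)\rangle$ in which $\partial_t\rho=-\dive q$ is substituted.

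\textbf{$L^2$ bound \eqref{rhoL21}.} Test \eqref{4.3} with $2(\rho-1)$. Exactly as in Lemma \ref{L2.2}, the left-hand side gives
\[
\frac{d}{dt}\|\rho-1\|_{L^2}^2+\eps\big(2a^2\|\nabla\rho\|_{L^2}^2+\|\rho-1\|_{L^2}^2+\textstyle\int\rho(\rho-1)^2\big).
\]
The nonlocal drift is handled by the identity $\int\nabla(\rho-1)^2\cdot\nabla\phi=\int(\rho-1)^3$. For the right-hand side:
\begin{itemize}
\item $2\eps\langle\partial_t\dive q,\rho-1\rangle=2\eps\frac{d}{dt}\langle\dive q,\rho-1\rangle+2\eps\|\dive q\|_{L^2}^2$, and $\eps\|\dive q\|_{L^2}^2$ is not controlled by \eqref{L2}.
\item $-2\eps\langle \dive(q\otimes v),\nabla\rho\rangle$ is absorbed into $\eps a^2\|\nabla\rho\|^2$ plus $C\eps\|q\|_{H^1}^2\|v\|_{L^\infty}^2$.
\end{itemize}
To avoid needing $H^1$ control of $q$, a cleaner route is to test \eqref{eq:reform} directly. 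Take the inner product of the momentum equation with $\eps\nabla\Lambda^{-2}(\rho-1)=\eps\nabla\phi$ and use $\partial_t\nabla\phi=\nabla\Lambda^{-2}\partial_t\rho=-\nabla\Lambda^{-2}\dive q$. This gives
\[
\frac{d}{dt}\eps\langle q,\nabla\phi\rangle+\eps a^2\|\rho-1\|_{L^2}^2+\eps\|\nabla\phi\|_{L^2}^2=-\langle q,\nabla\phi\rangle+\eps\|\Lambda^{-1}\dive q\|_{L^2}^2+\eps\langle f,\nabla\phi\rangle.
\]
The terms are then controlled as follows:
\begin{itemize}
\item $|\langle q,\nabla\phi\rangle|\le \frac{\eps}{4}\|\nabla\phi\|^2+\frac{C}{\eps}\|q\|^2$, and $\frac1\eps\int\|q\|^2$ is bounded by \eqref{L2}.
\item $\eps\int\|q\|^2\le\eps^2\cdot\frac1\eps\int\|q\|^2$ is also bounded by \eqref{L2}.
\item $\langle f_1,\nabla\phi\rangle=\langle(\rho-1)\nabla\phi,\nabla\phi\rangle$. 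To avoid any smallness requirement, combine it with $\langle\nabla\Lambda^{-2}(\rho-1),\nabla\phi\rangle$, so that the total becomes $\int\rho|\nabla\phi|^2\ge\tilde\rho_1\|\nabla\phi\|^2$, using the original form $\rho\nabla\phi$ as in \eqref{D1}.
\item $\langle f_2,\nabla\phi\rangle=\langle q\otimes v,\nabla^2\phi\rangle\le\|q\|\|v\|_{L^\infty}\|\rho-1\|$, which is absorbed and yields the $\eps\int\|q\|_{H^m}^2\|v\|_{H^1}^2$-type term.
\end{itemize}
The boundary term is bounded by $\eps\|q\|\|\nabla\phi\|\le C\times$\eqref{L2}.

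\textbf{$H^1$ and higher bounds \eqref{rhoL22}, \eqref{nablarhok}.} Apply $\partial^\alpha$ with $1\le|\alpha|\le k$ to \eqref{4.3} and test with $\partial^\alpha\rho$, following Lemmas \ref{L2.3}--\ref{L2.4}. The drift terms are treated as follows:
\begin{itemize}
\item The term $\langle\partial_j\partial^\alpha\rho\,\partial_j\phi,\partial^\alpha\rho\rangle$ becomes $-\frac12\langle\Delta\phi,(\partial^\alpha\rho)^2\rangle$, bounded by $\|\rho-1\|_{L^\infty}$ times the energy.
\item The commutator is instead bounded by $\|\nabla\phi\|_{L^\infty}\|\rho-1\|_{H^k}\|\nabla\partial^\alpha\rho\|$ plus lower-order terms. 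After Young's inequality this gives the Gronwall factor $e^{C\eps\int\|\nabla\phi\|_{L^\infty}^2}$.
\end{itemize}
For the time-derivative remainder, integrate by parts in time:
\[
\eps\int_0^t\langle\partial^\alpha\partial_t\dive q,\partial^\alpha\rho\rangle=\eps\langle\partial^\alpha\dive q,\partial^\alpha\rho\rangle\Big|_0^t+\eps\int_0^t\|\partial^\alpha\dive q\|^2.
\]
The boundary terms are bounded by $\frac14\|\partial^\alpha\rho(t)\|^2+C\eps^2\sup\|\dive q\|_{H^k}^2$ and $\|\rho_0-1\|_{H^k}^2+\|\dive q_0\|_{H^k}^2$. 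The integral equals $\eps^2\cdot\frac1\eps\int\|\dive q\|_{H^k}^2$. The convection remainder $\eps\langle\partial^\alpha\dive(q\otimes v),\nabla\partial^\alpha\rho\rangle$ is absorbed using \eqref{F1.1311}. Here we need $|\alpha|+1\le m$, i.e.\ $k\le m-1$, and $\|q\otimes v\|_{H^{k+1}}\le C\|q\|_{H^m}\|v\|_{H^m}$.

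Summing over orders and combining with \eqref{rhoL21} for the lowest order gives \eqref{rhoL22} (no $\nabla\phi$ factor is needed at $k=1$, since $\nabla^2\phi$ is controlled in $L^2$ by $\rho-1$) and \eqref{nablarhok}. The main obstacle is keeping the Poisson commutator at orders $k\ge2$ free of any smallness assumption. If the $L^\infty$ route fails, the fallback is the $H^3$-type bound of Lemma \ref{L2.4} combined with Gronwall's inequality.
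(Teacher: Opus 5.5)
Your overall architecture matches the paper's: energy estimates on the drift--diffusion form \eqref{4.3}, with $\eps\partial_t\dive q$ integrated by parts in time. Your proof of \eqref{rhoL21} is fine. Testing the momentum equation with $\eps\nabla\phi$ differs from the paper's test of \eqref{4.3} with $\Lambda^{-2}(\rho-1)$ only by the mass equation tested with $\phi$, so you absorb $\langle q,\nabla\phi\rangle$ via \eqref{L2} where the paper sees $\frac12\frac{d}{dt}\|\nabla\phi\|_{L^2}^2$. In the $f_2$ term, put $L^\infty$ on $q$ rather than on $v$ to get the stated factor $\|q\|_{H^m}^2\|v\|_{H^1}^2$.

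The gap is in \eqref{nablarhok}. You split the drift as in Lemma \ref{L2.4} and bound the transport part by $\eps\|\rho-1\|_{L^\infty}\|\partial^\alpha\rho\|_{L^2}^2$. This term can neither be absorbed nor put into Gr\"onwall. Its coefficient is bounded but not small, while the damping is only $\eps\|\partial^\alpha\rho\|_{L^2}^2$. Gr\"onwall would give $\exp\bigl(C\eps\int_0^t\|\rho-1\|_{L^\infty}\,dt'\bigr)$, which grows with $t$, since only quantities like $\eps\int_0^t\|\rho-1\|_{H^s}^2\,dt'$ are controlled. Your commutator bound involving only $\|\nabla\phi\|_{L^\infty}$ is true, but it does not follow from \eqref{F1.14}, which yields the coefficients $\|\nabla^2\phi\|_{L^\infty}$ and $\|\rho-1\|_{L^\infty}$.

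The missing idea is the identity \eqref{pro}, $(\rho-1)\nabla\phi=-\dive\bigl(\nabla\phi\otimes\nabla\phi-\frac12|\nabla\phi|^2\mathrm{Id}\bigr)$. The paper does not split the drift; it moves the whole divergence onto the test function:
\[
\eps\bigl|\langle\dive\partial^\alpha((\rho-1)\nabla\phi),\partial^\alpha\rho\rangle\bigr|\le\frac{a^2\eps}{4}\|\nabla\partial^\alpha\rho\|_{L^2}^2+\eps\|(\rho-1)\nabla\phi\|_{\dot H^k}^2 .
\]
Then \eqref{pro} with \eqref{F1.1311} gives $\|(\rho-1)\nabla\phi\|_{\dot H^k}\le C\|\nabla\phi\|_{L^\infty}\|\nabla\phi\|_{\dot H^{k+1}}\le C\|\nabla\phi\|_{L^\infty}\|\rho-1\|_{\dot H^k}$. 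This is exactly what produces the factor $e^{C\eps\int_0^t\|\nabla\phi\|_{L^\infty}^2}$.

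Your fallback to the $H^3$ coefficient of Lemma \ref{L2.4} proves a different inequality and defeats the purpose. The factor $\|\nabla\phi\|_{L^\infty}^2$ is kept because, by Gagliardo--Nirenberg, $\eps\int\|\nabla\phi\|_{L^\infty}^2$ is controlled by \eqref{rhoL21}--\eqref{rhoL22} alone (see \eqref{LinftyEPs}). By contrast, $\eps\int\|\rho-1\|_{H^3}^2$ is what the $k=2$ estimate is supposed to deliver.

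A smaller point concerns \eqref{rhoL22}. Deriving it from \eqref{rhoL21} puts $\|q_0\|_{L^2}^2+\|\nabla\phi_0\|_{L^2}^2$ on its right-hand side, and these terms are not in the statement. You also still owe control of the $k=1$ drift term $\eps\langle\rho-1,|\partial_j\rho|^2\rangle$, for instance via its sign or by interpolation. The paper instead tests \eqref{4.3} with $\rho-1$, the computation you started and then dropped. The term $\eps\|\dive q\|_{L^2}^2$ that bothered you there is one of the terms allowed on the right of \eqref{rhoL22}. This test yields $\eps\int\|\rho-1\|_{H^1}^2$, which then closes the $\nabla\rho$ estimate as in Lemma \ref{L2.3}.
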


\begin{proof}
Taking the $L^2$ inner product of \eqref{4.3} with $\Lambda^{-2}(\rho-1)$, we obtain
\begin{align}
&\frac{1}{2}\|\Lambda^{-1}(\rho-1)\|_{L^2}^2\Big|_{0}^{t}
+\varepsilon\int_{0}^t\Big(a^2\|\Lambda^{-1}\nabla (\rho-1)\|_{L^2}^2
+\int_{\mathbb{R}^d}\rho\,|\Lambda^{-2}\nabla (\rho-1)|^2\,dx\Big)\,dt'
\nonumber\\
&\quad=-\varepsilon \int_0^t \big\langle \partial_t q,\, \Lambda^{-2}\nabla (\rho-1)\big\rangle\,dt'
-\varepsilon \int_0^t \langle \dive (q\otimes v),\, \Lambda^{-2}\nabla (\rho-1) \rangle\,dt'.
\label{energy-phi}
\end{align}

We first handle the time derivative term. Using $\partial_t\rho=-\dive q$ and an integration
by parts in time:
\begin{align}
-\int_0^t \langle \partial_t q, \Lambda^{-2}\nabla (\rho-1)\rangle dt'
&=-\langle q,\Lambda^{-2}\nabla(\rho-1)\rangle\Big|_0^t
+\int_0^t \langle q, \Lambda^{-2}\nabla \partial_t\rho\rangle dt'
\nonumber\\
&=-\langle q,\Lambda^{-2}\nabla(\rho-1)\rangle\Big|_0^t
-\int_0^t \langle q, \Lambda^{-2}\nabla \dive q\rangle dt'.
\end{align}
Noting that
\[
-\langle q, \Lambda^{-2}\nabla \dive q\rangle
=\|\Lambda^{-1}\dive q\|_{L^2}^2,
\]
we get
\begin{align}
-\eps\int_0^t \langle \partial_t q, \Lambda^{-2}\nabla (\rho-1)\rangle dt'
&=-\eps\langle q,\Lambda^{-2}\nabla(\rho-1)\rangle\Big|_0^t
+\eps\int_0^t \|\Lambda^{-1}\dive q(t')\|_{L^2}^2\,dt'.
\end{align}
By $\eps\le1$ and Young's inequality,
\begin{align*}
\eps\big|\langle q,\Lambda^{-2}\nabla(\rho-1)\rangle\big|
\leq \frac{1}{4}\|\Lambda^{-1}(\rho-1)\|_{L^2}^2
+C\|q\|_{L^2}^2.
\end{align*}
Hence, we obtain
\begin{align}
-\eps\int_0^t \langle \partial_t q, \Lambda^{-2}\nabla (\rho-1)\rangle dt'
&\leq \frac{1}{4}\|\Lambda^{-1}(\rho(t)-1)\|_{L^2}^2
+C\big(\|q(t)\|_{L^2}^2+\|q_0\|_{L^2}^2+\|\Lambda^{-2}\nabla (\rho_0-1)\|_{L^2}^2\big)
\nonumber\\
&\quad+C\int_0^t \|q(t')\|_{L^2}^2\,dt'.
\end{align}
For the nonlinear term, Sobolev's embedding $H^{m-1}\hookrightarrow L^\infty$ gives
\begin{align*}
\eps\left|\int_0^t \langle  q\otimes v, \Lambda^{-2}\nabla^2 \rho \rangle dt'\right|
&\leq \eps\int_0^t\|\Lambda^{-2}\nabla^2 \rho(t') \|_{L^2}\|q(t')\|_{L^\infty}\|v(t')\|_{L^2}\,dt'\\
&\leq \frac{\eps}{4}\int_0^t\|\Lambda^{-1}\nabla \rho(t') \|_{L^2}^2\,dt'
+C\eps\int_0^t \|q(t')\|_{H^{m-1}}^2\|v(t')\|_{L^2}^2\,dt'.
\end{align*}
Using the equivalences
\[
\|\Lambda^{-1}(\rho-1)\|_{L^2}\sim \|\nabla\phi\|_{L^2},
\qquad
\|\Lambda^{-1}\nabla\rho\|_{L^2}\sim \|\rho-1\|_{L^2},
\]
and recalling \eqref{L2} and  \eqref{energy-phi}, we deduce  the first estimate \eqref{rhoL21}. 

Next, multiplying \eqref{4.3} by $(\rho-1)$, integrating over $\mathbb{R}^d$ and arguing similarly as in Lemma \ref{L2.2}, we obtain
\begin{equation}\label{rhoL2edd}
\begin{aligned}
&\quad \frac{1}{2}\frac{d}{dt}\|\rho-1\|_{L^2}^2+\frac{a^2}{2}\varepsilon \|\nabla\rho\|_{L^2}^2+\frac{\eps}{2}\|\rho-1\|_{L^2}^2+\frac{\eps}{2}\int_{\mathbb{R}^d}\rho(\rho-1)^2\,dx\\
&\leq\varepsilon\frac{d}{dt}\langle \dive q, \rho-1\rangle
+\varepsilon\|\dive q\|_{L^2}^2
+\varepsilon \|\dive (q\otimes v)\|_{L^2}^2.
\end{aligned}
\end{equation}
Integrating this over $[0,t]$ and using Young's inequality  yields
\begin{align}
&\quad\|\rho(t)-1\|_{L^2}^2+\varepsilon\int_0^t\|\rho(t')-1\|_{H^1}^2\,dt'
\nonumber\\
&\leq C\big(\|\rho_0-1\|_{L^2}^2+\|\dive q_0\|_{L^2}^2\big)
+C\varepsilon^2 \sup_{t'\in[0,t]}\|\dive q(t')\|_{L^2}^2
\nonumber\\
&\quad+\varepsilon \int_0^t \|\dive q(t')\|_{L^2}^2\,dt'
+C\varepsilon \int_0^t \|\dive (q\otimes v)(t')\|_{L^2}^2\,dt'.\label{rhoH1edd}
\end{align}
Moreover, similarly to Lemma \ref{L2.3}, we have
\begin{align*}
&\quad\frac{d}{dt}\|\nabla \rho\|_{L^2}^2+\eps a^2 \|\nabla \rho\|_{H^1}^2\\
&\leq C \eps\|\rho-1\|_{H^1}^2+\eps\frac{d}{dt}\langle \nabla\dive q,\nabla \rho\rangle+\eps \|\nabla\dive q\|_{L^2}^2+\eps \|\nabla\dive (q\otimes v)\|_{L^2}^2,
\end{align*}
which implies
\begin{align}
&\|\nabla\rho(t)\|_{L^2}^2+\varepsilon\int_0^t\|\nabla\rho(t')\|_{H^1}^2\,dt'
\nonumber\\
&\leq C\eps \int_0^t\|\rho(t')-1\|_{H^1}^2\,dt'+C\big(\|\nabla\rho_0\|_{L^2}^2+\|\nabla\dive q_0\|_{L^2}^2\big)
+C\varepsilon^2 \sup_{t'\in[0,t]}\|\nabla\dive q(t')\|_{L^2}^2
\nonumber\\
&\quad+C\varepsilon\int_0^t\|\nabla\dive q(t')\|_{L^2}^2\,dt'+C\varepsilon \int_0^t \|\nabla\dive (q\otimes v)(t')\|_{L^2}^2\,dt'.\label{nabla2}
\end{align}
Note that the first term on the right-hand side of \eqref{nabla2} has been addressed in \eqref{rhoL2edd} and \eqref{rhoH1edd}. The Moser-type estimate \eqref{F1.1311} leads to 
\begin{align}\label{s}
\|\dive (q\otimes v)\|_{\dot{H}^s}&\le C\|q\|_{L^\infty}\|v\|_{\dot{H}^{s+1}}
+C\|v\|_{L^\infty}\|q\|_{\dot{H}^{s+1}}\nonumber\\
&\le C\|q\|_{H^{m}}\|v\|_{H^{m}},\,\, 0\leq s\leq m-1.
\end{align}
By \eqref{rhoH1edd}-\eqref{s}, we arrive at the second estimate \eqref{rhoL22}.

We now turn to the higher-order estimate \eqref{nablarhok}. In contrast to the proof of Lemma \ref{L2.4}, which proceeds step by step, the argument here is based on a direct energy estimate, with the coefficient
$\|\nabla\phi\|_{L^\infty}^2$ kept in the Gr\"onwall factor. One can apply $\partial^\alpha$ ($|\alpha|=k$, $2\leq k\leq m-1$) to \eqref{4.3}:
\begin{equation}
\begin{aligned}\label{4.3alpha}
\partial_t \partial^\alpha\rho-\eps a^2 \Delta \partial^\alpha\rho+\eps \partial^\alpha \rho= \eps\dive\partial^\alpha\big((\rho-1)\nabla\phi\big)+\eps\partial_t \dive \partial^\alpha q+\eps \dive\dive\partial^\alpha (q\otimes v).
\end{aligned}
\end{equation}

Taking the $L^2$ inner product with $\partial^\alpha\rho$ and applying Young's inequality yields
\begin{align*}
\frac12&\frac{d}{dt}\|\partial^\alpha\rho\|_{L^2}^2
+a^2\eps\|\nabla\partial^\alpha\rho\|_{L^2}^2
+\eps\|\partial^\alpha\rho\|_{L^2}^2\\
&\le \frac{a^2\eps}{4}\|\nabla\partial^\alpha\rho\|_{L^2}^2+\frac{\eps}{4}\|\partial^\alpha\rho\|_{L^2}^2\\
&\quad+\eps\|(\rho-1)\nabla\phi\|_{\dot{H}^k}^2+\eps \|\dive \partial^\alpha(q\otimes v)\|_{L^2}^2+\eps\langle\partial_t \dive \partial^\alpha q, \partial^\alpha\rho\rangle.
\end{align*}
Note the following identity
\begin{equation}\label{pro}
(\rho-1)\nabla\phi
= -\,\dive\!\Big(
\nabla\phi\otimes\nabla\phi
-\frac{1}{2}|\nabla\phi|^2\,\mathrm{Id}
\Big).
\end{equation}
Using the Moser-type estimates \eqref{F1.1311}, it holds 
\begin{align*}
\|(\rho-1)\nabla\phi\|_{\dot{H}^k}
&\le \|\nabla\phi\otimes\nabla\phi\|_{\dot{H}^{k+1}}+\| |\nabla\phi|^2 \|_{\dot{H}^{k+1}}\\
&\leq C\|\nabla\phi\|_{L^\infty}
\|\nabla\phi\|_{\dot{H}^{k+1}}\\
&\leq C \|\nabla\phi\|_{L^\infty}
\|\rho-1\|_{\dot{H}^{k}}.
\end{align*}
By $\partial_t \rho=-\dive q$, one has
\begin{align*}
\eps\langle\partial_t \dive \partial^\alpha q, \partial^\alpha\rho\rangle=\eps\frac{d}{dt}\langle\dive \partial^\alpha q, \partial^\alpha\rho\rangle+\eps\|\partial^\alpha \dive q\|_{L^2}^2.
\end{align*}
Note that the nonlinear convective term has been addressed in \eqref{s}. Consequently, for $2\leq k\leq m-1$, we obtain the differential inequality
\begin{equation}
\begin{aligned}\label{ddthigh}
&\frac{d}{dt}\|\rho-1\|_{\dot{H}^k}^2
+a^2\eps \|\nabla \rho\|_{\dot{H}^k}^2
+\eps\|\rho-1\|_{\dot{H}^k}^2\\
&\le  C\eps\|\nabla\Lambda^{-2} (\rho-1)\|_{L^\infty}^2\|\rho-1\|_{\dot{H}^k}^2+C\eps \|\dive q\|_{\dot{H}^{k}}^2\\
&\quad+\eps\frac{d}{dt}\sum_{|\alpha|=k}\langle\dive \partial^\alpha q, \partial^\alpha\rho\rangle+\eps\|q\|_{H^m}^2  \|v\|_{H^{m}}^2.
\end{aligned}
\end{equation}
Integrating \eqref{ddthigh} in time, noting 
\begin{align*}
\Big|\eps\sum_{|\alpha|=k}\langle\dive \partial^\alpha q, \partial^\alpha\rho\rangle\Big|
\leq \frac{1}{2}\|\rho-1\|_{\dot{H}^k}^2
+C\eps^2\|\dive q\|_{\dot{H}^{k}}^2,
\end{align*}
and using Gr\"onwall's lemma, we end up with the desired bound \eqref{nablarhok}.
\end{proof}

\subsection{Highest-order estimates}

Then, we aim to establish higher-order energy estimates. To this end, it is convenient to introduce the variables
\begin{equation}\label{def:nv}
n:=a\ln\rho,\quad v:=\frac{q}{\rho},\quad n_0:=a\ln \rho_0,\quad v_0:=\frac{q_0}{\rho_0},
\end{equation}
so that \eqref{Euler} can be reformulated into a symmetric system in terms of $(n,v)$:
\begin{equation}\label{2.9}
\begin{cases}
\partial_t n  +v\cdot \nabla n+ a\,\dive v =0,\\
\partial_t v +v\cdot \nabla v + a\,\grad n+\frac{1}{a}\nabla\Lambda^{-2}n +\dfrac{v}{\varepsilon }=-\nabla\Lambda^{-2} \mathcal{G}(n),\\
(n,v)|_{t=0}=(n_0,v_0)
\end{cases}
\end{equation}
with the quadratic term
\begin{align}
\mathcal{G}(n):=e^{\frac{n}{a}}-1-\frac{n}{a}.
\end{align}

\begin{lemma}\label{lemma45}
Assume that \eqref{assumupperlow} holds.  For any $t\in[0,T_\eps)$, it holds  that
\begin{equation}\label{highest}
\begin{aligned}
&\quad\sup_{t'\in[0,t]}\|(n,v)(t')\|_{\dot{H}^m}^2+\frac{1}{\eps}(1-C\eps \sup_{t'\in[0,t]}\|\nabla v(t')\|_{H^{m-1}})\int_0^t \|v(t')\|_{\dot{H}^m}^2\,dt'\\
&\leq C \|(n_0,v_0)\|_{\dot{H}^m}^2 +\frac{C}{\eps}\int_0^t \|v(t')\|_{H^{m-1}}^2\,dt'+C\eps\int_0^t \|n(t')\|_{H^m}^4\,dt'.
\end{aligned}
\end{equation}
\end{lemma}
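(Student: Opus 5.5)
We will carry out a standard symmetric hyperbolic energy estimate at the top order $|\alpha|=m$ for the system \eqref{2.9}. Fix $|\alpha|=m$, apply $\partial^\alpha$ to both equations of \eqref{2.9}, and take the $L^2$ inner product with $(\partial^\alpha n,\partial^\alpha v)$. The linear coupling terms $a\langle \dive\partial^\alpha v,\partial^\alpha n\rangle+a\langle\nabla\partial^\alpha n,\partial^\alpha v\rangle$ cancel after integration by parts. The damping term produces $\frac1\eps\|\partial^\alpha v\|_{L^2}^2$ on the left-hand side.

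The remaining terms will be handled one by one.

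\emph{Linear Poisson term.} We bound it crudely:
\[
\frac1a|\langle\nabla\Lambda^{-2}\partial^\alpha n,\partial^\alpha v\rangle|\le C\|n\|_{H^{m-1}}\|\partial^\alpha v\|_{L^2}.
\]
By Young's inequality this is at most $\frac{1}{4\eps}\|\partial^\alpha v\|_{L^2}^2+C\eps\|n\|_{H^{m-1}}^2$. The last piece is not of the stated form, so instead we will integrate by parts and use the $n$-equation, as in Lemma~\ref{lem:Hk-refined}. This gives
\[
-\tfrac{1}{2a^2}\tfrac{d}{dt}\|\Lambda^{-1}\partial^\alpha n\|^2_{L^2}
\]
plus commutator terms coming from $v\cdot\nabla n$. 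The good sign term $\|\Lambda^{-1}\partial^\alpha n\|_{L^2}^2$ is then kept on the left.

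\emph{Nonlocal nonlinear term.} Since $\|\nabla\Lambda^{-2}\partial^\alpha\mathcal G(n)\|_{L^2}\le C\|\mathcal G(n)\|_{\dot H^{m-1}}$, it is bounded by
\[
\frac{1}{4\eps}\|\partial^\alpha v\|^2_{L^2}+C\eps\|\mathcal G(n)\|_{H^{m-1}}^2 .
\]
Because $\mathcal G$ is quadratic and $n$ ranges in a compact set by \eqref{assumupperlow}, the estimates \eqref{F1.15} and \eqref{F1.13} give $\|\mathcal G(n)\|_{H^{m-1}}\le C\|n\|_{H^m}^2$. This yields the term $C\eps\int_0^t\|n\|_{H^m}^4$.

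\emph{Convection terms.} We split
\[
\partial^\alpha(v\cdot\nabla w)=v\cdot\nabla\partial^\alpha w+[\partial^\alpha,v\cdot\nabla]w,\qquad w\in\{n,v\}.
\]
The transport part gives $\frac12\int \dive v\,|\partial^\alpha w|^2$, bounded by $\|\nabla v\|_{L^\infty}\|\partial^\alpha w\|_{L^2}^2$. The commutator \eqref{F1.14} gives $C(\|\nabla v\|_{L^\infty}\|\nabla w\|_{H^{m-1}}+\|\nabla v\|_{H^{m-1}}\|\nabla w\|_{L^\infty})\|\partial^\alpha w\|_{L^2}$.

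For $w=v$, every such term carries $\|\partial^\alpha v\|^2_{L^2}$ or $\|v\|_{H^{m-1}}$ factors. With $\|\nabla v\|_{L^\infty}\le C\|\nabla v\|_{H^{m-1}}$, this contributes $C\sup\|\nabla v\|_{H^{m-1}}\int\|v\|_{\dot H^m}^2$ plus lower-order pieces. This is exactly the origin of the factor $(1-C\eps\sup\|\nabla v\|_{H^{m-1}})$ in \eqref{highest}.

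For $w=n$, the worst piece is $\|\nabla v\|_{H^{m-1}}\|\nabla n\|_{L^\infty}\|\partial^\alpha n\|_{L^2}$, together with $\|\nabla v\|_{L^\infty}\|\partial^\alpha n\|_{L^2}^2$. Here $\|\nabla v\|_{H^{m-1}}\le \|v\|_{\dot H^m}+\|v\|_{H^{m-1}}$, so Young's inequality gives
\[
\frac1{4\eps}\big(\|v\|^2_{\dot H^m}+\|v\|_{H^{m-1}}^2\big)+C\eps\|n\|_{H^m}^4 ,
\]
which is again of the allowed form.

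\emph{Conclusion.} Summing over $|\alpha|=m$ and integrating in time then gives \eqref{highest}. Absorbing the $\frac{1}{4\eps}$ pieces leaves $\frac1\eps(1-C\eps\sup\|\nabla v\|_{H^{m-1}})\int\|v\|^2_{\dot H^m}$ on the left. The $\frac{C}{\eps}\int\|v\|^2_{H^{m-1}}$ and $C\eps\int\|n\|_{H^m}^4$ terms appear on the right.

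\emph{Main obstacle.} The step I expect to be delicate is the linear Poisson term $\frac1a\nabla\Lambda^{-2}\partial^\alpha n$. Its cancellation is only approximate, because $\partial_t n=-a\dive v-v\cdot\nabla n$, so it leaves an extra error $\langle \Lambda^{-2}\partial^\alpha n,\partial^\alpha(v\cdot\nabla n)\rangle$. I would move one derivative onto $\Lambda^{-2}$ to bound this by $C\|n\|_{H^{m-1}}\|v\cdot\nabla n\|_{H^{m-1}}\le C\|n\|_{H^m}^2\|v\|_{H^{m-1}}$. Young's inequality then gives $\frac1\eps\|v\|^2_{H^{m-1}}+\eps\|n\|^4_{H^m}$, which fits the right-hand side.

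If the time-derivative bookkeeping becomes awkward, the fallback is the crude bound above. That produces $\eps\int\|n\|_{H^{m-1}}^2$, which would need to be absorbed using \eqref{rhoL21}–\eqref{nablarhok}. I would first try the identity route so that the estimate appears exactly as stated.
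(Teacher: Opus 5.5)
Your proposal is correct and follows essentially the same route as the paper. Both run the top-order energy estimate for the symmetrized $(n,v)$ system and rewrite the Poisson term via the $n$-equation as $\frac{1}{2a^2}\frac{d}{dt}\|\Lambda^{-1}\partial^\alpha n\|_{L^2}^2$ plus the error $\frac{1}{a^2}\langle\Lambda^{-2}\partial^\alpha n,\partial^\alpha(v\cdot\nabla n)\rangle\le C\|n\|_{H^{m-1}}\|v\cdot\nabla n\|_{H^{m-1}}$. They then use commutator estimates for the convection terms and Young splittings that produce exactly $\frac{C}{\eps}\|v\|_{H^{m-1}}^2$ and $C\eps\|n\|_{H^m}^4$, so your crude fallback is never needed.

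One small point, which the paper shares: integrating in time leaves the initial term $\frac{1}{a^2}\sum_{|\alpha|=m}\|\Lambda^{-1}\partial^\alpha n_0\|_{L^2}^2\sim\|n_0\|_{\dot H^{m-1}}^2$, so the data term on the right should really be $\|(n_0,v_0)\|_{H^m}^2$; this is harmless, since that is the quantity bounded later in \eqref{3.1}.
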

\begin{proof}
Let $ |\alpha|= m$. 
Applying $\partial^\alpha$ to \eqref{2.9} yields
\begin{equation}\label{sys-nv-alpha}
\left\{
\begin{aligned}
&\partial_t \partial^\alpha n+a\,\dive \partial^\alpha v=-\partial^\alpha(v\cdot\nabla n),\\
&\partial_t \partial^\alpha v+a\,\nabla \partial^\alpha n+\frac1\varepsilon \partial^\alpha v+\frac{1}{a}\nabla\Lambda^{-2}\partial^\alpha n
=-\partial^\alpha((v\cdot\nabla)v)-\nabla\Lambda^{-2}\partial^\alpha \mathcal{G}(n).
\end{aligned}\right.
\end{equation}
Taking the $L^2$ inner product of \eqref{sys-nv-alpha} with $(\partial^\alpha n,\partial^\alpha v)$, we infer
\begin{align}
\frac12\frac{d}{dt}\Big(\|\partial^\alpha n\|_{L^2}^2+\|\partial^\alpha v\|_{L^2}^2\Big)
+\frac1\varepsilon\|\partial^\alpha v\|_{L^2}^2
+\frac{1}{a}\int_{\R^d}\nabla\Lambda^{-2}\partial^\alpha n\cdot \partial^\alpha v\,dx
= -\sum_{i=1,2,3}I^i_\alpha,
\label{Em-alpha-0}
\end{align}
where
\[
I^1_\alpha:=\big\langle \partial^\alpha(v\cdot\nabla n),\,\partial^\alpha n\big\rangle,
\quad
I^2_\alpha:=\big\langle \partial^\alpha((v\cdot\nabla)v),\,\partial^\alpha v\big\rangle,\quad I_\alpha^3:=\big\langle\partial^\alpha\nabla\Lambda^{-2}
\mathcal{G}(n),\partial^\alpha v\big\rangle.\]

We now treat the Poisson term. Using the first equation of \eqref{sys-nv-alpha}, we have
\begin{align}
\int_{\R^d}\nabla\Lambda^{-2}\partial^\alpha n\cdot \partial^\alpha v\,dx
&=-\int_{\R^d}\Lambda^{-2}\partial^\alpha n\,\dive \partial^\alpha v\,dx\nonumber\\
&=\frac1a\int_{\R^d}\Lambda^{-2}\partial^\alpha n\,\partial_t \partial^\alpha n\,dx
+\frac1a\int_{\R^d}\Lambda^{-2}\partial^\alpha n\,\partial^\alpha(v\cdot\nabla n)\,dx\nonumber\\
&=\frac1{2a}\frac{d}{dt}\|\Lambda^{-1}\partial^\alpha n\|_{L^2}^2+a\widetilde I_\alpha,
\label{poisson-alpha}
\end{align}
where
\[
\widetilde I_\alpha:=\frac{1}{a^2}\int_{\R^d}\Lambda^{-2}\partial^\alpha n\,\partial^\alpha(v\cdot\nabla n)\,dx.
\]
Plugging \eqref{poisson-alpha} into \eqref{Em-alpha-0} yields
\begin{align}
\frac12\frac{d}{dt}\Big(\|\partial^\alpha n\|_{L^2}^2+\|\partial^\alpha v\|_{L^2}^2+\frac{1}{a^2}\|\Lambda^{-1}\partial^\alpha n\|_{L^2}^2\Big)
+\frac1\varepsilon\|\partial^\alpha v\|_{L^2}^2
= -\sum_{i=1,2,3}I^i_\alpha-\widetilde I_\alpha.
\label{Em-alpha}
\end{align}
Rewriting the transport terms in commutator forms and integrating by parts, we get
\begin{align*}
I^1_\alpha+I^2_\alpha
&=-\frac12\int_{\R^d}\dive v\,|\partial^\alpha n|^2\,dx
+\big\langle \partial^\alpha(v\cdot\nabla n)-v\cdot\nabla \partial^\alpha n,\,\partial^\alpha n\big\rangle\\
&\quad-\frac12\int_{\R^d}\dive v\,|\partial^\alpha v|^2\,dx
+\big\langle \partial^\alpha (v\cdot\nabla v)-v\cdot\nabla \partial^\alpha v,\,\partial^\alpha v\big\rangle.
\end{align*}
Therefore,
\begin{align}
|I^1_\alpha|+|I^2_\alpha|
&\le C\|\nabla v\|_{L^\infty}\big(\|\partial^\alpha n\|_{L^2}^2+\|\partial^\alpha v\|_{L^2}^2\big)\nonumber\\
&\quad+\|[\partial^\alpha,\,v\cdot\nabla]n\|_{L^2}\,\|\partial^\alpha n\|_{L^2}
+\|[\partial^\alpha,\,v\cdot\nabla]v\|_{L^2}\,\|\partial^\alpha v\|_{L^2}.
\label{IJ-pre}
\end{align}
The classical commutator bound \eqref{F1.14} yields
\begin{equation}\label{comm-est-nv}
\|\partial^\alpha (v\cdot\nabla n)-v\cdot \partial^\alpha \nabla n\|_{L^2}+\|\partial^\alpha(v\cdot\nabla v)-v\cdot\nabla \partial^\alpha v \|_{L^2}
\le C\|\nabla v\|_{H^{m-1}} \|(n,v)\|_{\dot{H}^m}.
\end{equation}
 Plugging \eqref{comm-est-nv} into \eqref{IJ-pre} and using Young's inequality, we obtain
\begin{equation}\label{IJ-est}
|I^1_\alpha|+|I^2_\alpha|
\le  C\|\nabla v\|_{H^{m-1}} \|v\|_{\dot H^m}^2+\frac{1}{2\eps} \|v\|_{\dot{H}^m}^2+\frac{C}{\eps}\|v\|_{H^{m-1}}^2+C\eps\|n\|_{H^m}^4.
\end{equation}
By Taylor's expansion, there exists a smooth function $\widetilde{\mathcal G}$ with
$\widetilde{\mathcal G}(0)=0$ such that
$\mathcal G(n)=\widetilde{\mathcal G}(n)n$. Thus, by \eqref{F1.1311} and
the composition estimate \eqref{F1.15}, we have
\begin{equation}\label{I33}
\begin{aligned}
|I^3_\alpha|&\leq C\|\mathcal{G}(n)\|_{H^{m-1}}\|\partial^{\alpha}v\|_{L^2}\\
&\leq C\|\tilde{\mathcal{G}}(n)\|_{H^{m-1}}\|n\|_{H^{m-1}}\|\partial^{\alpha}v\|_{L^2}\\
&\leq \frac{1}{4\eps} \|\partial^{\alpha}v\|_{L^2}^2+\eps \|n\|_{H^{m-1}}^4.
\end{aligned}
\end{equation}
It remains to estimate $\widetilde I_\alpha$. Moser-type product law \eqref{F1.1311} leads to
\begin{equation}
\begin{aligned}
|\widetilde I_\alpha|
&\le \frac{1}{a^2}\|\Lambda^{-1} \partial^{\alpha} n\|_{L^2} \|\Lambda^{-1}\partial^{\alpha}(v\cdot\nabla n)\|_{L^2}\\
&\leq C\|n\|_{H^{m-1}}(\|v\|_{L^\infty} \|\nabla n\|_{H^{m-1}}+\|v\|_{H^{m-1}}\|\nabla n\|_{L^\infty}) \\
&\leq  \frac{C}{\eps}\|v\|_{H^{m-1}}^2+ \eps \|n\|_{H^{m-1}}^2\|n\|_{H^{m}}^2.
\label{Itilde1-est0}
\end{aligned}
\end{equation}

Gathering \eqref{IJ-est}, \eqref{I33} and \eqref{Itilde1-est0} in \eqref{Em-alpha}, and summing over all $|\alpha|=m$, we obtain
\begin{equation}
\begin{aligned}
&\quad\frac{d}{dt}\sum_{|\alpha|=m}\Big(\|\partial^\alpha n\|_{L^2}^2+\|\partial^\alpha v\|_{L^2}^2+\frac{1}{a^2}\|\Lambda^{-1}\partial^\alpha n\|_{L^2}^2\Big)
+\Big(\frac1\varepsilon-C \|\nabla v\|_{H^{m-1}}\Big)\sum_{|\alpha|=m}\|\partial^\alpha v\|_{L^2}^2\\
&\le \frac{C}{\eps}\|v\|_{H^{m-1}}^2+C\eps\|n\|_{H^m}^4.
\label{Em-top}
\end{aligned}
\end{equation}
Integrating \eqref{Em-top} in time yields \eqref{highest}.
\end{proof}

\section{Error estimates and upper and lower bounds for \texorpdfstring{$\rho$}{rho}}\label{section:EP2}
\vspace{2mm}

Since the limiting system admits a global classical solution $\rho^*$ with uniform upper and lower bounds, the goal of this section is to derive quantitative estimates for the error $\rho^\eps-\rho^*$. 

To this end, we introduce the rescaled variables (see \eqref{scaling}):
\begin{align}\label{4.61}
\rho^\eps(t,x)=\rho\Big(\frac{t}{\eps},x\Big),\quad 
q^\eps(t,x)=\frac{1}{\eps}q\Big(\frac{t}{\eps},x\Big),\quad v^\eps=\frac{q^\eps}{\rho^\eps}.
\end{align}
Let the initial layer correction $Z_L^\eps$ be given by \eqref{qL}. Inspired by \cite{CBPSX,pengJFA}, we also introduce the effective variable associated with Darcy's law:
\begin{align}
Z^\eps=q^\eps+a^2\nabla\rho^\eps+\rho^\eps \nabla \Lambda^{-2}(\rho^\eps-1)-Z_L^\eps,
\end{align}
with a zero initial value due to the definition of $Z_L^\eps$.

Then $\rho^\eps$ satisfies
\begin{align}\label{ddwide}
\partial_t \rho^\eps-a^2 \Delta \rho^\eps
-\dive\big(\rho^\eps \nabla \Lambda^{-2}(\rho^\eps-1)\big)
=-\dive Z^\eps-\dive Z_L^\eps. 
\end{align}
Moreover, $Z_L^\eps$ can be written in the form \begin{align}\label{Zdecom}
Z_L^\eps=q_L^\eps+q_{L,1}^\eps\quad\text{with}\quad 
q_L^\eps=e^{-\frac{t}{\eps^2}}\frac{1}{\eps}q_0 \quad\text{and}\quad 
q_{L,1}^\eps=e^{-\frac{t}{\eps^2}}
\bigl(a^2\nabla\rho_0+\rho_0\nabla\phi_0\bigr).
\end{align}
Here, the $L^2(\mathbb{R}_+;H^m)$-norm of $q_L^\eps$ is uniformly bounded but does not vanish as $\eps\to0$, while its $L^1(\mathbb{R}_+;H^m)$-norm is of order $\mathcal{O}(\eps)$:
\begin{align}
\int_0^\infty \|q_{L}^\eps(t')\|_{H^m}^2\,dt'
&\leq  \int_0^\infty e^{-\frac{2t'}{\eps^2}}\frac{1}{\eps^2}\,dt' \|q_0 \|_{H^m}^2
\leq  C \|q_0 \|_{H^m}^2,\label{tlldeqL11}\\
\int_0^\infty \|q_{L}^\eps(t')\|_{H^m}\,dt'
&\leq \int_0^\infty e^{-\frac{t'}{\eps^2}}\frac{1}{\eps}\,dt'\,\|q_0 \|_{H^m}
\leq  C\eps \|q_0 \|_{H^m}\label{tlldeqL1}.
\end{align}
On the other hand, $q_{L,1}^\eps$ has a convergence rate in a lower-order space: 
\begin{align}\label{tlldeqL2}
\int_0^\infty \|q_{L,1}^\eps(t')\|_{H^{m-1}}^2\,dt'&\leq C\eps^2(\|\nabla \rho_0 \|_{H^{m-1}}^2+\|\rho_0\nabla\phi_0\|_{H^{m-1}}^2)\nonumber\\
&\leq C\eps^2(1+\|\rho_0-1\|_{H^{m-1}}^2)(\|\nabla\phi_0\|_{L^2}^2+\|\rho_0-1\|_{H^{m}}^2).
\end{align}
Here we have used
\[  \|\rho_0\nabla\phi_0\|_{H^{m-1}}\le C(1+\|\rho_0-1\|_{H^{m-1}})\|\nabla\phi_0\|_{H^{m-1}} \]
and
\[ \|\nabla\phi_0\|_{H^{m-1}}\le C\big(\|\nabla\phi_0\|_{L^2}+\|\rho_0-1\|_{H^{m-2}}\big), \]
as $\nabla\phi_0=\nabla\Lambda^{-2}(\rho_0-1)$.

\begin{lemma}\label{lemma4555}
Assume that \eqref{assumupperlow} holds.  For all $t\in [0, T_\eps)$, it holds
\begin{equation}
\begin{aligned}\label{4.65}
&\quad \sup_{t'\in [0,\eps t]}\|\rho^\eps-\rho^*\|_{H^{m-1}\cap\dot{H}^{-1}}^2
+\int_0^{\eps t} \|\rho^\eps-\rho^*\|_{H^{m}\cap\dot{H}^{-1}}^2\,dt'\\
&\leq Ce^{C\int_0^\infty\|\rho^*-1\|_{H^{m-1}}^2\,dt'+C\int_0^{\eps t}\|\nabla \rho^\eps\|_{H^{m-1}}^2\,dt'}\\
&\quad\times
\Big( \frac{1}{\eps^2}\int_0^{\eps t} \|Z^\eps\|_{H^{m-1}}^2\,dt'+ \|q_0 \|_{H^m}^2+(1+\|\rho_0-1\|_{H^{m-1}}^2)(\|\nabla\phi_0\|_{L^2}^2+\|\rho_0-1\|_{H^{m}}^2)\Big)\eps^2.
\end{aligned}
\end{equation}
\end{lemma}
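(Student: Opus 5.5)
We work with the error $w:=\rho^\eps-\rho^*$ on the rescaled interval $[0,\eps t]$, so the statement is an energy estimate for a perturbed drift--diffusion equation. Subtracting \eqref{DDp} from \eqref{ddwide} gives
\begin{align*}
\partial_t w-a^2\Delta w-\dive\big(\rho^\eps\nabla\Lambda^{-2}w\big)-\dive\big(w\nabla\Lambda^{-2}(\rho^*-1)\big)=-\dive Z^\eps-\dive q_L^\eps-\dive q_{L,1}^\eps,
\end{align*}
with $w(0)=0$. We estimate $w$ in $\dot H^{-1}$, then in $L^2$, and then in $\dot H^k$ for $1\le k\le m-1$. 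The dissipative terms are always $a^2\|\nabla w\|_{\dot H^{k}}^2$, together with the coercive Poisson part coming from $\dive(\rho^\eps\nabla\Lambda^{-2}w)$.

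\textbf{$\dot H^{-1}$ level.} Test with $\Lambda^{-2}w$. The term $-\langle \dive(\rho^\eps\nabla\Lambda^{-2}w),\Lambda^{-2}w\rangle=\int\rho^\eps|\nabla\Lambda^{-2}w|^2\ge \tilde\rho_1\|\Lambda^{-1}w\|_{L^2}^2$ has a good sign, by \eqref{assumupperlow}. The cross term $\langle w\nabla\phi^*,\nabla\Lambda^{-2}w\rangle$ is bounded by $\|\nabla\phi^*\|_{L^\infty}\|w\|_{L^2}\|\Lambda^{-1}w\|_{L^2}$, and $\|\nabla\phi^*\|_{L^\infty}\lesssim\|\rho^*-1\|_{H^{m-1}}+\|\nabla\phi^*\|_{L^2}$. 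For $Z^\eps$ and $q_{L,1}^\eps$ we use Cauchy--Schwarz, producing $\|Z^\eps\|_{L^2}^2$ and $\|q_{L,1}^\eps\|_{L^2}^2$. The singular layer $q_L^\eps$ must not be squared, because it is only $\mathcal O(1)$ in $L^2_t$. Instead we bound
\begin{align*}
|\langle q_L^\eps,\nabla\Lambda^{-2}w\rangle|\le\|q_L^\eps\|_{L^2}\|\Lambda^{-1}w\|_{L^2}
\end{align*}
and use the $L^1_t$ bound \eqref{tlldeqL1}. This gives $\sup\|\Lambda^{-1}w\|^2\le C(\dots)+C\big(\int\|q_L^\eps\|\big)\sup\|\Lambda^{-1}w\|$, so the layer contributes $\eps^2\|q_0\|_{H^m}^2$. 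Hence this part of the estimate must be done on the level of $\sup_t$ of the norm, not via Gr\"onwall in squared form.

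\textbf{$L^2$ and $\dot H^k$ levels.} Apply $\partial^\alpha$ with $|\alpha|=k\le m-1$ and test with $\partial^\alpha w$. After integration by parts, the forcing terms are
\begin{align*}
\langle \partial^\alpha Z^\eps,\nabla\partial^\alpha w\rangle,\qquad \langle \partial^\alpha q_{L,1}^\eps,\nabla\partial^\alpha w\rangle,
\end{align*}
which we absorb into dissipation, leaving $\|Z^\eps\|_{H^{m-1}}^2$ and $\|q_{L,1}^\eps\|_{H^{m-1}}^2$. The latter is bounded by \eqref{tlldeqL2}. For the layer we instead write $\langle\partial^\alpha\dive q_L^\eps,\partial^\alpha w\rangle\le\|q_L^\eps\|_{H^m}\|w\|_{H^{m-1}}$ and again use \eqref{tlldeqL1}. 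This is precisely why $q_0\in H^m$ is required.

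The drift terms are handled as in Lemmas \ref{L2.2}--\ref{L2.4}. The top-order piece $\langle \nabla\phi^\eps\cdot\nabla\partial^\alpha w,\partial^\alpha w\rangle=-\tfrac12\langle\Delta\phi^\eps,|\partial^\alpha w|^2\rangle$ is bounded by $\|\rho^\eps-1\|_{L^\infty}\|w\|_{\dot H^k}^2$. The piece $\rho^\eps\partial^\alpha w$ arising from $\Delta\Lambda^{-2}w=-w$ is harmless. The commutators are controlled by \eqref{F1.14}:
\begin{align*}
\|\nabla\rho^\eps\|_{H^{m-1}}\|w\|_{H^{m-1}}\|\nabla w\|_{\dot H^k}\quad\text{and}\quad \|\rho^*-1\|_{H^{m-1}}\|w\|_{H^{k}}\|\nabla w\|_{\dot H^k}.
\end{align*}
Here $\|\nabla\Lambda^{-2}w\|_{L^\infty}$ is controlled by $\|w\|_{H^{m-1}\cap\dot H^{-1}}$ through Sobolev embedding, and $\|\rho^\eps-1\|_{L^\infty}\lesssim \|\nabla\rho^\eps\|_{H^{m-1}}$ in low dimensions, or we keep it via \eqref{assumupperlow}. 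After Young's inequality, these give Gr\"onwall coefficients $\|\rho^*-1\|_{H^{m-1}}^2+\|\nabla\rho^\eps\|_{H^{m-1}}^2$. The first is integrable on $\mathbb R_+$ by \eqref{exp1}, and the second appears as $\int_0^{\eps t}$ in the exponent.

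\textbf{Main obstacle.} The delicate step is combining the Gr\"onwall argument with the non-squared $L^1_t$ treatment of $q_L^\eps$. For this we write the differential inequality for $Y:=\|w\|_{H^{m-1}\cap\dot H^{-1}}^2$ as
\begin{align*}
Y'+D\le gY+F+h\sqrt Y,
\end{align*}
where $h=C\|q_L^\eps\|_{H^m}$ and $F$ collects the $Z^\eps$ and $q_{L,1}^\eps$ terms. Multiplying by $e^{-\int g}$ and using $\int h\sqrt{Y}\le \tfrac12\sup Y+\tfrac12(\int h)^2$ gives the claimed bound, with prefactor $e^{C\int g}$ and source $\eps^{-2}\!\int\|Z^\eps\|^2+\|q_0\|_{H^m}^2+\dots$, all multiplied by $\eps^2$. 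Adding the time integral of the dissipation then yields the $\int\|w\|_{H^m\cap\dot H^{-1}}^2$ part of \eqref{4.65}.
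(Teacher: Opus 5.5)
Your route is the paper's. You use the same error equation for $w=\rho^\eps-\rho^*$ and energy estimates at the $\dot H^{-1}$, $L^2$ and higher levels. You keep the layer $q_L^\eps$ unsquared and pay for it through \eqref{tlldeqL1}, then combine Young against $\sup_t$ with Gr\"onwall. Your ordering, Gr\"onwall first and then Young on $\int h\sqrt Y$, is in fact the cleaner one.

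\textbf{The top-order drift step is wrong as written.} In the equation you derived, the only transport term acting on $w$ is $\dive(w\nabla\phi^*)$. The term $\dive(\rho^\eps\nabla\Lambda^{-2}w)$ only produces $-\rho^\eps\partial^\alpha w$ plus terms involving $\nabla\rho^\eps$. Hence the top-order piece is $\langle\nabla\phi^*\cdot\nabla\partial^\alpha w,\partial^\alpha w\rangle=\frac12\langle\rho^*-1,|\partial^\alpha w|^2\rangle\le C\|\rho^*-1\|_{H^{m-1}}\|w\|_{\dot H^k}^2$. After Young against the dissipation, this gives the time-integrable coefficient $\|\rho^*-1\|_{H^{m-1}}^2$.

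You instead wrote $\nabla\phi^\eps$ with coefficient $\|\rho^\eps-1\|_{L^\infty}$, and neither of your justifications works. First, $\|\rho^\eps-1\|_{L^\infty}\lesssim\|\nabla\rho^\eps\|_{H^{m-1}}$ is false for $d=1,2$; it holds only for $d\ge3$, where $1<\frac d2<m$. Second, keeping $\|\rho^\eps-1\|_{L^\infty}$ merely bounded via \eqref{assumupperlow} makes the Gr\"onwall factor $e^{C\eps t}$. That destroys the uniformity in $t$ that the bootstrap in Section~\ref{section:EP3} needs. Replacing $\phi^\eps$ by $\phi^*$, as your own equation dictates, repairs the step.

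\textbf{A smaller discrepancy at the $\dot H^{-1}$ level.} You bound $\langle w\nabla\phi^*,\nabla\Lambda^{-2}w\rangle$ by $\|\nabla\phi^*\|_{L^\infty}\|w\|_{L^2}\|\Lambda^{-1}w\|_{L^2}$ and use $\|\nabla\phi^*\|_{L^\infty}\lesssim\|\rho^*-1\|_{H^{m-1}}+\|\nabla\phi^*\|_{L^2}$. This puts $\int_0^\infty\|\nabla\phi^*\|_{L^2}^2\,dt\le C\|\nabla\phi_0\|_{L^2}^2$ into the exponent. That is harmless for Theorem~\ref{theorem1.1}, but it is not the exponent in \eqref{4.65}.

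To get the stated bound $C\|\rho^*-1\|_{H^{m-1}}\|w\|_{\dot H^{-1}}^2$, set $\psi=\Lambda^{-2}w$, so that $w=-\Delta\psi$. As in \eqref{f1-stress}, $w\nabla\psi=-\dive\big(\nabla\psi\otimes\nabla\psi-\frac12|\nabla\psi|^2\mathbf{I}_d\big)$. The cross term then equals $\langle\nabla^2\phi^*,\nabla\psi\otimes\nabla\psi-\frac12|\nabla\psi|^2\mathbf{I}_d\rangle$. It is therefore bounded by $C\|\nabla^2\phi^*\|_{L^\infty}\|\Lambda^{-1}w\|_{L^2}^2\le C\|\rho^*-1\|_{H^{m-1}}\|\Lambda^{-1}w\|_{L^2}^2$.
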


\begin{proof}
Let $\delta \rho^\eps=\rho^\eps-\rho^*$. Then, subtracting the limiting equation yields
\begin{equation}
\begin{aligned}\label{detlarho}
&\quad\partial_t \delta \rho^\eps-a^2 \Delta \delta \rho^\eps\\
&=\dive \big( \rho^\eps \nabla \Lambda^{-2}\delta \rho^\eps\big)
+\dive \big( \delta \rho^\eps\nabla \Lambda^{-2}(\rho^*-1)\big)
-\dive Z^\eps-\dive q_{L}^\eps-\dive q_{L,1}^\eps.
\end{aligned}
\end{equation}

The first step is to establish the $L^2$ estimate. Taking the $L^2$ inner product with $\delta \rho^\eps$, we obtain
\begin{align}
\frac{1}{2}\frac{d}{dt}\|\delta \rho^\eps\|_{L^2}^2+a^2\|\nabla \delta \rho^\eps\|_{L^2}^2
&=\int_{\mathbb{R}^d} \dive ( \rho^\eps \nabla \Lambda^{-2}\delta \rho^\eps)\delta \rho^\eps\,dx\nonumber\\
&\quad+\int_{\mathbb{R}^d} \dive \big( \delta \rho^\eps\nabla \Lambda^{-2}(\rho^*-1)\big) \delta \rho^\eps\,dx\nonumber\\
&\quad-\langle \dive Z^\eps,\delta \rho^\eps\rangle
-\langle \dive Z_L^\eps,\delta \rho^\eps\rangle.
\end{align}
For the first term, it follows that
\begin{align*}
\int_{\R^d}\dive(\rho^\eps\nabla\Lambda^{-2}\delta\rho^\eps)\,\delta\rho^\eps\,dx
&=-\int_{\R^d}\rho^\eps\nabla\Lambda^{-2}\delta\rho^\eps\cdot\nabla\delta\rho^\eps\,dx\nonumber\\
&=\int_{\R^d}\rho^\eps(\Delta\Lambda^{-2}\delta\rho^\eps)\,\delta\rho^\eps\,dx
+\int_{\R^d}\nabla\rho^\eps\cdot\nabla\Lambda^{-2}\delta\rho^\eps\,\delta\rho^\eps\,dx\nonumber\\
&=-\int_{\R^d}\rho^\eps|\delta\rho^\eps|^2\,dx
+\int_{\R^d}\nabla\rho^\eps\cdot\nabla\Lambda^{-2}\delta\rho^\eps\,\delta\rho^\eps\,dx.
\end{align*}
Here,
\begin{align*}
\left|\int_{\mathbb{R}^d} \nabla \rho^\eps \cdot \nabla \Lambda^{-2}\delta \rho^\eps  \delta \rho^\eps\,dx\right|
&\le C\|\nabla \rho^\eps\|_{H^{m-1}} 
\big(\|\delta \rho^\eps\|_{L^2}^2+\|\delta \rho^\eps\|_{\dot{H}^{-1}}^2\big)\\
&\leq \frac{\tilde{\rho}_1}{8} (\|\delta \rho^\eps\|_{L^2}^2+\|\nabla\Lambda^{-2}\delta\rho^\eps\|_{L^2}^2)+C\|\nabla \rho^\eps\|_{H^{m-1}}^2 (\|\delta \rho^\eps\|_{L^2}^2+\|\delta\rho^\eps\|_{\dot{H}^{-1}}^2).
\end{align*}
For the second nonlinear term,
\begin{align*}
\left|\int_{\mathbb{R}^d} \dive \big( \delta \rho^\eps\nabla \Lambda^{-2}(\rho^*-1)\big) \delta \rho^\eps\,dx\right|
&\le C\|\rho^*-1\|_{H^{m-1}}\|\delta\rho^\eps\|_{L^2}^2\\
&\le\frac{\tilde{\rho}_1}{8} \|\delta \rho^\eps\|_{L^2}^2+C\|\rho^*-1\|_{H^{m-1}}^2\|\delta\rho^\eps\|_{L^2}^2.
\end{align*}
For the remainder terms,
\begin{align*}
|\langle \dive Z^\eps,\delta \rho^\eps\rangle|
&\le \frac{a^2}{4}\|\nabla \delta \rho^\eps\|_{L^2}^2+C\|Z^\eps\|_{L^2}^2,\\
|\langle \dive q_{L}^\eps,\delta \rho^\eps\rangle|
&\le \|\dive q_{L}^\eps\|_{L^2}\|\delta \rho^\eps\|_{L^2},\\
|\langle \dive q_{L,1}^\eps,\delta \rho^\eps\rangle|
&\le \frac{a^2}{4}\|\nabla \delta \rho^\eps\|_{L^2}^2+C\| q_{L,1}^\eps\|_{L^2}^2.
\end{align*}
Collecting the above estimates yields
\begin{equation}
\begin{aligned}\label{L2-delta}
&\quad \frac{d}{dt}\|\delta \rho^\eps\|_{L^2}^2+a^2\|\nabla \delta \rho^\eps\|_{L^2}^2+\tilde{\rho}_1\|\delta \rho^\eps\|_{L^2}^2 \\
&\le C(\|\rho^*-1\|_{H^{m-1}}^2+\|\nabla \rho^\eps\|_{H^{m-1}}^2)
(\|\delta \rho^\eps\|_{L^2}^2+\|\delta \rho^\eps\|_{\dot{H}^{-1}}^2)\\
&\quad+C\|Z^\eps\|_{L^2}^2+C\|q_{L,1}^\eps\|_{L^2}^2+\|\dive q_{L}^\eps\|_{L^2}\|\delta \rho^\eps\|_{L^2}.
\end{aligned}
\end{equation}
To address the right-hand side of \eqref{L2-delta}, one needs to derive the $\dot{H}^{-1}$ estimate. Taking the inner product of \eqref{detlarho} with $\Lambda^{-2}\delta \rho^\eps$, we get
\begin{equation}
\begin{aligned}\label{4.73}
&\quad\frac{1}{2}\frac{d}{dt}\|\Lambda^{-1}\delta\rho^\eps\|_{L^2}^2
+a^2\|\nabla\Lambda^{-1}\delta\rho^\eps\|_{L^2}^2+\int_{\mathbb{R}^d} \rho^\eps |\nabla\Lambda^{-2}\delta\rho^\eps|^2\,dx\\
&\le C\|\rho^*-1\|_{H^{m-1}} \|\delta \rho^\eps\|_{H^{-1}}^2
+C\|Z^\eps\|_{L^2}^2+C\|q_{L,1}^\eps\|_{L^2}^2+C\|\dive q_{L}^\eps\|_{L^2}\|\Lambda^{-1}\delta \rho^\eps\|_{L^2}.
\end{aligned}
\end{equation}

Then, we aim to establish higher-order estimates. Applying $\partial^\alpha$ with $|\alpha|= m-1$ to \eqref{detlarho}, taking the $L^2$ inner product with $\partial^\alpha \delta \rho^\eps$, and using Moser estimates yields
\begin{equation}\label{4.74}
\begin{aligned}
\frac{d}{dt}\|\delta \rho^\eps\|_{\dot{H}^{m-1}}^2
+a^2\|\delta \rho^\eps\|_{\dot{H}^{m}}^2
&\le C(\|\rho^*-1\|_{H^{m-1}}^2+\|\nabla \rho^\eps\|_{H^{m-1}}^2)
(\|\delta \rho^\eps\|_{H^{m-1}}^2+\|\delta \rho^\eps\|_{\dot{H}^{-1}}^2)\\
&\quad
+C\|Z^\eps\|_{H^{m-1}}^2+C\|q_{L,1}^\eps\|_{H^{m-1}}^2+\|\dive q_{L}^\eps\|_{H^{m-1}}\|\delta \rho^\eps\|_{\dot{H}^{m-1}}.
\end{aligned}
\end{equation}

Combining the above estimates \eqref{L2-delta}, \eqref{4.73} and \eqref{4.74}, we have
\begin{equation*}
\begin{aligned}
&\sup_{t'\in [0,t]}\|\delta \rho^\eps(t')\|_{H^{m-1}\cap \dot{H}^{-1}}^2
+\int_0^t \|\delta \rho^\eps(t')\|_{H^{m}\cap \dot{H}^{-1}}^2\,dt'\nonumber\\
&\leq C\int_0^t(\|\rho^*(t')-1\|_{H^{m-1}}^2+\|\nabla \rho^\eps(t')\|_{H^{m-1}}^2)\|\delta \rho^\eps(t')\|_{H^{m-1}\cap \dot{H}^{-1}}^2\,dt'\\
&\quad+C \int_0^t (\|Z^\eps(t')\|_{H^{m-1}}^2+\|q_{L,1}^\eps(t')\|_{H^{m-1}}^2)\,dt'+\int_0^t \|q_{L}^\eps(t')\|_{H^m}\,dt' \sup_{t'\in [0,t]}\|\delta \rho^\eps(t')\|_{H^{m-1}\cap \dot{H}^{-1}}\\
&\leq \frac{1}{2}\sup_{t'\in [0,t]} \|\delta \rho^\eps(t')\|_{H^{m-1}\cap \dot{H}^{-1}}^2+C\int_0^t(\|\rho^*(t')-1\|_{H^{m-1}}^2+\|\nabla \rho^\eps(t')\|_{H^{m-1}}^2)\|\delta \rho^\eps(t')\|_{H^{m-1}\cap \dot{H}^{-1}}^2\,dt'\\
&\quad+C \int_0^t (\|Z^\eps(t')\|_{H^{m-1}}^2+\|q_{L,1}^\eps(t')\|_{H^{m-1}}^2)\,dt'+C\Big( \int_0^t \|q_{L}^\eps(t')\|_{H^m}\,dt'\Big)^2,
\end{aligned}
\end{equation*}
which, together with Gr\"onwall's inequality, \eqref{tlldeqL1} and \eqref{tlldeqL2}, concludes \eqref{4.65}.
\end{proof}

To derive the convergence rate, as indicated in \eqref{4.65}, it remains to obtain an $\mathcal{O}(\eps)$ bound of $Z^\eps$ in $L^2(0,\eps T;H^{m-1})$. 
To this end, we rewrite the equation for $ q^\eps- q_L^\eps$ as
\begin{equation}\label{error10000}
\eps^2\partial_t (q^\eps-q_L^\eps)+(q^\eps-q_L^\eps)
+a^2\nabla\rho^\eps+\rho^\eps \nabla \Lambda^{-2}(\rho^\eps-1)
=-\eps^2 \dive (q^\eps\otimes v^\eps).
\end{equation}
Recall that
\[ Z^\eps=q^\eps+a^2\nabla\rho^\eps+\rho^\eps\nabla\Lambda^{-2}(\rho^\eps-1)-Z_L^\eps. \]
Differentiating $Z^\eps$ in time and using \eqref{error10000} yields
\begin{equation}\label{eq:Z-eq}
\partial_t Z^\eps+\frac{1}{\varepsilon^2} Z^\eps
=a^2 \nabla\partial_t \rho^\eps+\tilde{f}^\eps,
\end{equation}
with $Z^\eps|_{t=0}=0$ and
\begin{equation}\label{def:f-tilde}
\tilde{f}^\eps
:=\partial_t\big(\rho^\eps \nabla \Lambda^{-2}(\rho^\eps-1)\big)
-\dive\!(q^\eps\otimes v^\eps).
\end{equation}

\begin{lemma}\label{lem2.5}
Assume that \eqref{assumupperlow} holds. Then, for all $t\in[0,T_\varepsilon)$, it holds
\begin{equation}\label{2.17}
\begin{aligned}
&\quad \sup_{t'\in[0,\eps t]}\|Z^\eps(t')\|_{H^{m-1}}^2
+\frac{1}{\eps^2}\int_0^{\eps t}\|Z^\eps(t')\|_{H^{m-1}}^2\,dt'\\
&\leq C\|q_0 \|_{H^m}^2
+C(1+\|\rho_0-1\|_{H^{m-1}}^2)
(\|\nabla\phi_0\|_{L^2}^2+\|\rho_0-1\|_{H^{m}}^2)\\
&\quad+C\sup_{t'\in[0,\eps t]}\|(\rho^\eps-1)(t')\|_{H^{m}}^2\\
&\quad+ C \int_0^{\eps t}
\big(1+\|\rho^\eps(t')-1\|_{H^m}^2+\eps^2\| v^\eps(t')\|_{H^m}^2\big)
(\| q^\eps(t')\|_{H^m}^2+\|\rho^\eps(t')-1\|_{H^m}^2)\,dt'.
\end{aligned}
\end{equation}
\end{lemma}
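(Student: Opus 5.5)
The plan is a standard damped energy estimate for \eqref{eq:Z-eq} in $H^{m-1}$, exploiting $Z^\eps|_{t=0}=0$ and the strong damping $\eps^{-2}Z^\eps$. Fix $|\alpha|\le m-1$, apply $\partial^\alpha$ to \eqref{eq:Z-eq} and take the $L^2$ inner product with $\partial^\alpha Z^\eps$. This gives
\[
\frac12\frac{d}{dt}\|\partial^\alpha Z^\eps\|_{L^2}^2+\frac{1}{\eps^2}\|\partial^\alpha Z^\eps\|_{L^2}^2
=a^2\langle \nabla\partial^\alpha\partial_t\rho^\eps,\partial^\alpha Z^\eps\rangle+\langle\partial^\alpha\tilde f^\eps,\partial^\alpha Z^\eps\rangle .
\]
By Young's inequality each source term is bounded by $\frac{1}{4\eps^2}\|\partial^\alpha Z^\eps\|_{L^2}^2+C\eps^2\|\cdot\|_{L^2}^2$. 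After integrating over $[0,\eps t]$ with zero initial data, it then suffices to bound $\eps^2\int_0^{\eps t}\big(\|\nabla\partial_t\rho^\eps\|_{H^{m-1}}^2+\|\tilde f^\eps\|_{H^{m-1}}^2\big)dt'$.

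\textbf{Nonlinear source $\tilde f^\eps$.} The convective part is handled by \eqref{s} (rescaled): $\|\dive(q^\eps\otimes v^\eps)\|_{H^{m-1}}\le C\|q^\eps\|_{H^m}\|v^\eps\|_{H^m}$. Multiplied by $\eps^2$, this produces the term $\eps^2\|v^\eps\|_{H^m}^2\|q^\eps\|_{H^m}^2$. For the time derivative of the Poisson term, use $\partial_t\rho^\eps=-\dive q^\eps$ to write
\[
\partial_t(\rho^\eps\nabla\Lambda^{-2}(\rho^\eps-1))=-\dive q^\eps\,\nabla\Lambda^{-2}(\rho^\eps-1)-\rho^\eps\nabla\Lambda^{-2}\dive q^\eps .
\]
Its $H^{m-1}$ norm is bounded, via the algebra property \eqref{F1.13} and $\|\nabla\Lambda^{-2}\dive q^\eps\|_{H^{m-1}}\le C\|q^\eps\|_{H^{m-1}}$, by $C(1+\|\rho^\eps-1\|_{H^{m-1}}+\|\nabla\phi^\eps\|_{H^{m-1}})\|q^\eps\|_{H^m}$. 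The factor $\|\nabla\phi^\eps\|_{H^{m-1}}$ is controlled by $\|\nabla\phi^\eps\|_{L^2}+\|\rho^\eps-1\|_{H^{m-2}}$, which Lemma \ref{lem:L2} bounds by the data. This yields the contribution $(1+\|\rho^\eps-1\|_{H^m}^2)\|q^\eps\|_{H^m}^2$ in \eqref{2.17}, even with an extra factor $\eps^2\le1$ to spare.

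\textbf{Main obstacle: the linear term $a^2\nabla\partial_t\rho^\eps=-a^2\nabla\dive q^\eps$.} Estimated naively it costs $\|q^\eps\|_{H^m}$, which is acceptable only because of the $\eps^2$ weight; but we must avoid requiring $m+1$ derivatives. At the top level $|\alpha|=m-1$, the term $\nabla\partial^\alpha\dive q^\eps$ involves $m+1$ derivatives of $q^\eps$, so a direct Young bound fails. Instead, integrate by parts in time. Write
\[
a^2\langle\nabla\partial^\alpha\partial_t\rho^\eps,\partial^\alpha Z^\eps\rangle=a^2\frac{d}{dt}\langle\nabla\partial^\alpha(\rho^\eps-1),\partial^\alpha Z^\eps\rangle-a^2\langle\nabla\partial^\alpha(\rho^\eps-1),\partial_t\partial^\alpha Z^\eps\rangle,
\]
and substitute $\partial_tZ^\eps=-\eps^{-2}Z^\eps+a^2\nabla\partial_t\rho^\eps+\tilde f^\eps$. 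The piece containing $\nabla\partial_t\rho^\eps$ is then an exact derivative, $\frac{a^4}{2}\frac{d}{dt}\|\nabla\partial^\alpha\rho^\eps\|_{L^2}^2$. The boundary terms are absorbed as
\[
|\langle\nabla\partial^\alpha\rho^\eps,\partial^\alpha Z^\eps\rangle|\le\tfrac14\|\partial^\alpha Z^\eps\|^2+C\|\rho^\eps-1\|_{H^m}^2,
\]
which produces the terms $\sup\|\rho^\eps-1\|_{H^m}^2$ and, at $t=0$, the data term $\|\rho_0-1\|_{H^m}^2$. The remaining pieces are bounded as follows:
\[
\eps^{-2}|\langle\nabla\partial^\alpha\rho^\eps,\partial^\alpha Z^\eps\rangle|\le\tfrac{1}{4\eps^2}\|Z^\eps\|_{H^{m-1}}^2+C\eps^{-2}\|\rho^\eps-1\|_{H^m}^2 .
\]
This last bound is too costly, so I would instead use the equation for $Z^\eps$ in the other direction, or the orthogonal alternative: keep the $\eps^{-2}$ cross term with $Z^\eps=q^\eps+a^2\nabla\rho^\eps+\dots$, pair it, and obtain $\|\rho^\eps-1\|_{H^m}^2$ integrated in time without the $\eps^{-2}$ loss, using that $\langle\nabla\partial^\alpha\rho^\eps,\partial^\alpha(a^2\nabla\rho^\eps)\rangle\ge0$ and the other components give $\|q^\eps\|_{H^{m}}\|\rho^\eps-1\|_{H^m}$. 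Getting this cross term to land on $\int(\|q^\eps\|_{H^m}^2+\|\rho^\eps-1\|_{H^m}^2)$ without negative powers of $\eps$ is the step I expect to be delicate.

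\textbf{Initial layer and conclusion.} The initial layer contributions ($Z_L^\eps$ inside the cross term) are controlled by \eqref{tlldeqL11}–\eqref{tlldeqL2}, giving $\|q_0\|_{H^m}^2$ and the $\phi_0$ data term. Summing over $|\alpha|\le m-1$ and absorbing then yields \eqref{2.17}.
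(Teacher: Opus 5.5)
Your estimate of $\tilde f^\eps$ is fine. The step you flag as delicate, however, is a genuine gap, and the fix you sketch does not work.

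Integrating by parts in time and inserting $\partial_t Z^\eps=-\eps^{-2}Z^\eps+a^2\nabla\partial_t\rho^\eps+\tilde f^\eps$ puts the cross term $+a^2\eps^{-2}\langle\nabla\partial^\alpha\rho^\eps,\partial^\alpha Z^\eps\rangle$ on the right-hand side. If you substitute $Z^\eps=a^2\nabla\rho^\eps+\cdots$ there, you get $+a^4\eps^{-2}\|\nabla\partial^\alpha\rho^\eps\|_{L^2}^2$. This term is nonnegative, but it sits on the wrong side: it is a source, not dissipation. It is exactly the $\eps^{-2}\|\rho^\eps-1\|_{H^m}^2$ loss you wanted to avoid. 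After rescaling it is $\eps^{-1}\int_0^t\|\rho-1\|_{H^m}^2\,dt'$, whereas only $\eps\int_0^t\|\rho-1\|_{H^m}^2\,dt'$ is controlled.

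The other components of $Z^\eps$ keep the factor $\eps^{-2}$ as well. For example, $\eps^{-2}\int_0^{\eps t}\|q^\eps\|_{H^m}\|\rho^\eps-1\|_{H^m}\,dt'$ is of order $\eps^{-2}$ times the energy. Likewise, $\eps^{-2}\int_0^{\eps t}\|\nabla\rho^\eps\|_{H^{m-1}}\|q_L^\eps\|_{H^{m-1}}\,dt'$ is of order $\eps^{-1}$. The root cause is the time integration by parts itself: it trades $\partial_t\rho^\eps$ for $\partial_t Z^\eps$, which has size $\eps^{-2}Z^\eps$.

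The missing idea, which is what the paper does, is to leave the time derivative on $\rho^\eps$. Instead, expand the \emph{second} slot of $\langle a^2\nabla\partial_t\rho^\eps,Z^\eps\rangle_{H^{m-1}}$ using the definition of $Z^\eps$.

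The only top-order piece is $\langle a^2\nabla\partial_t\rho^\eps,a^2\nabla\rho^\eps\rangle_{H^{m-1}}=\frac{a^4}{2}\frac{d}{dt}\|\nabla\rho^\eps\|_{H^{m-1}}^2$. This is an exact derivative and yields $\frac{a^4}{2}\|\nabla\rho^\eps(t)\|_{H^{m-1}}^2$.

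Set $w:=q^\eps-q_L^\eps+\rho^\eps\nabla\Lambda^{-2}(\rho^\eps-1)$, which has only $m$ derivatives at top order. Use $\partial_t\rho^\eps=-\dive q^\eps$ and integrate by parts in space: $\langle a^2\nabla\partial_t\rho^\eps,w\rangle_{H^{m-1}}=a^2\langle\dive q^\eps,\dive w\rangle_{H^{m-1}}$. This is bounded by $C\|q^\eps\|_{H^m}^2+C\|q_L^\eps\|_{H^m}^2+C(1+\|\rho^\eps-1\|_{H^m}^2)\|\rho^\eps-1\|_{H^m}^2$, with no negative power of $\eps$. The $q_L^\eps$ part is then bounded by $C\|q_0\|_{H^m}^2$ via \eqref{tlldeqL11}.

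Time integration by parts is needed only for $-\langle a^2\nabla\partial_t\rho^\eps,q_{L,1}^\eps\rangle_{H^{m-1}}$. There it is harmless, because $\partial_t q_{L,1}^\eps=-\eps^{-2}q_{L,1}^\eps$ and $\int_0^\infty\eps^{-2}e^{-t'/\eps^2}\,dt'=1$. This piece produces the $\sup\|\nabla\rho^\eps\|_{H^{m-1}}^2$ term and the $\phi_0$-data term in \eqref{2.17}.
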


\begin{proof}
Applying $\partial^\alpha$ with $|\alpha|\le m-1$ to \eqref{eq:Z-eq}, taking the $L^2$ inner product with $\partial^\alpha Z^\eps$, summing over $|\alpha|\le m-1$ and noting $Z^\eps(0)=0$, we obtain
\begin{equation}
\begin{aligned}\label{Z-energy-raw}
&\quad\frac12\|Z^\eps(t)\|_{H^{m-1}}^2+\frac{1}{\eps^2}\int_0^t\|Z^\eps(t')\|_{H^{m-1}}^2\,dt'\\
&=\int_0^t\big\langle a^2\nabla\partial_t\rho^\eps,Z^\eps\big\rangle_{H^{m-1}}\,dt'
+\int_0^t\big\langle \tilde f^\eps,Z^\eps\big\rangle_{H^{m-1}}\,dt'.
\end{aligned}
\end{equation}
We first handle the coupling term. The definition of $Z^\eps$ implies
\begin{align*}
\int_0^t\big\langle a^2\nabla\partial_t\rho^\eps,Z^\eps\big\rangle_{H^{m-1}}\,dt'
&=\int_0^t\big\langle a^2\nabla\partial_t\rho^\eps,\,a^2\nabla\rho^\eps\big\rangle_{H^{m-1}}\,dt'-\int_0^t\langle a^2\nabla\partial_t\rho^\eps, q_{L,1}^\eps\rangle_{H^{m-1}}\,dt' \\
&\quad+\int_0^t\big\langle a^2\nabla\partial_t\rho^\eps,\, q^\eps- q_L^\eps+\rho^\eps\nabla\Lambda^{-2}(\rho^\eps-1)\big\rangle_{H^{m-1}}\,dt'.
\end{align*}
The first term satisfies
\begin{equation*}
\int_0^t\big\langle a^2\nabla\partial_t\rho^\eps,\,a^2\nabla\rho^\eps\big\rangle_{H^{m-1}}\,dt'
=\frac{a^4}{2}\int_0^t\frac{d}{dt'}\|\nabla\rho^\eps(t')\|_{H^{m-1}}^2\,dt'\leq\frac{a^4}{2} \|\nabla\rho^\eps(t)\|_{H^{m-1}}^2 .
\end{equation*}
For the second term, we have
\begin{align*}
&\quad\int_0^t \langle a^2\nabla\partial_t\rho^\eps, q_{L,1}^\eps\rangle_{H^{m-1}}\,dt' \\
&\leq \langle a^2\nabla \rho^\eps, q_{L,1}^\eps\rangle_{H^{m-1}}\Big|^{t}_{0}\\
&\quad+\int_0^t\frac{1}{\eps^2} e^{-\frac{t'}{\eps^2}}\,dt'\,  a^2\sup_{t'\in[0,t]}\| \nabla\rho^\eps\|_{H^{m-1}} \| a^2\nabla\rho_0+\rho_0\nabla\phi_0\|_{H^{m-1}}\\
&\leq C\sup_{t'\in[0,t]}\| \nabla\rho^\eps\|_{H^{m-1}}^2+C(1+\|\rho_0-1\|_{H^{m-1}}^2)(\|\nabla\phi_0\|_{L^2}^2+\|\rho_0-1\|_{H^{m}}^2).
\end{align*}

For the remainder term, using $\partial_t\rho^\eps=-\dive q^\eps$ and integrating by parts, we infer
\begin{align*}
&\quad\big|\big\langle a^2\nabla\partial_t\rho^\eps,\, q^\eps- q_L^\eps+\rho^\eps\nabla\Lambda^{-2}(\rho^\eps-1)\big\rangle_{H^{m-1}}\big|\\
&\le C\| q^\eps\|_{H^m}^2+C\| q_L^\eps\|_{H^{m}}^2
+C
\|\dive(\rho^\eps\nabla\Lambda^{-2}(\rho^\eps-1))\|_{H^{m-1}}^2.
\end{align*}

For the second term on the right-hand side of \eqref{Z-energy-raw},  using Young's inequality, we get
\begin{equation*}
\Big|\int_0^t\langle \tilde f^\eps,Z^\eps\rangle_{H^{m-1}}\,dt'\Big|
\le \frac{1}{2\eps^2}\int_0^t\|Z^\eps(t')\|_{H^{m-1}}^2\,dt'
+C\eps^2\int_0^t\|\tilde f^\eps(t')\|_{H^{m-1}}^2\,dt'.
\end{equation*}
By \eqref{pro} and Moser-type estimates,
\begin{align*}
&\quad\|\dive(\rho^\eps\nabla\Lambda^{-2}(\rho^\eps-1))\|_{H^{m-1}}\\
&\le C\| \rho^\eps-1\|_{H^{m-1}}+C\Big(\|\rho^\eps-1\|_{L^\infty}\|\rho^\eps-1\|_{H^m}
+\|\nabla\Lambda^{-2}(\rho^\eps-1)\|_{L^\infty}\|\rho^\eps-1\|_{H^m}\Big)\nonumber\\
&\le C(1+\|\rho^\eps-1\|_{H^m})\|\rho^\eps-1\|_{H^m}.
\end{align*}
Thus,
\begin{equation}\label{coupling-final}
\begin{aligned}
&\quad
\left|
\big\langle a^2\nabla\partial_t\rho^\eps,\,
q^\eps-q_L^\eps+\rho^\eps\nabla\Lambda^{-2}(\rho^\eps-1)
\big\rangle_{H^{m-1}}
\right|\\
&\le C\|q^\eps\|_{H^m}^2+C\|q_L^\eps\|_{H^{m}}^2
+C(1+\|\rho^\eps-1\|_{H^m}^2)\|\rho^\eps-1\|_{H^m}^2.
\end{aligned}
\end{equation}
Using $\partial_t\rho^\eps=-\dive q^\eps$ and the Moser-type estimate \eqref{F1.1311}, we have
\begin{equation}\label{quad-flux-est}
\|\dive (q^\eps\otimes v^\eps)\|_{H^{m-1}}
\le C\| q^\eps\|_{H^m}\| v^\eps\|_{H^m}.
\end{equation}
Therefore,
\begin{equation}\label{f-tilde-final}
\|\tilde f^\eps\|_{H^{m-1}}^2
\le C(1+\|\rho^\eps-1\|_{H^m}^2)\|q^\eps\|_{H^m}^2
+C\|q^\eps\|_{H^m}^2\|v^\eps\|_{H^m}^2.
\end{equation}

Combining the above estimates with \eqref{Z-energy-raw}, using \eqref{tlldeqL11} and \eqref{tlldeqL2}, integrating the resulting inequality in time and recalling $Z^\eps(0)=0$, we get \eqref{2.17}.
\end{proof}

Then, we arrive at the desired error estimate and obtain control of the lower and upper bounds for $\rho$. 
\begin{lemma}\label{lem2.6}
Assume that \eqref{assumupperlow} holds. Then, for all $t\in[0,T_\eps)$, we have
\begin{equation}\label{2.21}
\sup_{t'\in[0,t]}\|\rho(t')-\rho^*(\eps t')\|_{H^{m-1}\cap\dot{H}^{-1}}^2
+\varepsilon \int_0^t\|\rho(t')-\rho^*(\eps t')\|_{H^m\cap\dot{H}^{-1}}^2\,dt'
\le C\varepsilon ^2 B(t),
\end{equation}
and
\begin{equation}\label{2.22}
|\rho(t,x)-\rho^*(\eps t,x)|\le
C\varepsilon \sqrt{B(t)},\quad (t,x)\in [0,T_\eps)\times\mathbb{R}^d.
\end{equation}
where $\rho_1$ and $\rho_2$ are, respectively, the lower and upper bounds of $\rho^*$ (see \eqref{F2.4}), and  $B(t)$ is given by
\begin{equation}\label{Bt}
\begin{aligned}
B(t):&=Ce^{C \int_0^\infty\|\rho^*(t')-1\|_{H^{m-1}}^2\,dt'+C\eps \int_0^{t}\|\nabla \rho(t')\|_{H^{m-1}}^2\,dt'}\\
&\quad\times \Big( \|q_0 \|_{H^m}^2+(1+\|\rho_0-1\|_{H^{m-1}}^2)(\|\nabla\phi_0\|_{L^2}^2+\|\rho_0-1\|_{H^{m}}^2)\\
&\quad\quad+\sup_{t'\in[0,t]}\|\rho(t')-1\|_{H^{m}}^2+ (1+\sup_{t'\in[0,t]}\|v(t')\|_{H^m}^2)\frac{1}{\eps}\int_0^{t} \|q(t')\|_{H^m}^2\,dt'\\
&\quad\quad+(1+\sup_{t'\in[0,t]}\|\rho(t')-1\|_{H^m}^2)\eps\int_0^{t}\|\rho(t')-1\|_{H^m}^2\,dt'\Big).
\end{aligned}
\end{equation}
\end{lemma}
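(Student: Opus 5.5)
The plan is to combine Lemma \ref{lemma4555} with Lemma \ref{lem2.5}, and then undo the scaling \eqref{scaling}. First, I insert the bound \eqref{2.17} for $\frac{1}{\eps^2}\int_0^{\eps t}\|Z^\eps\|_{H^{m-1}}^2\,dt'$ into the right-hand side of \eqref{4.65}. This gives a bound for the error $\delta\rho^\eps=\rho^\eps-\rho^*$ on $[0,\eps t]$. Its prefactor is the exponential $e^{C\int_0^\infty\|\rho^*-1\|_{H^{m-1}}^2+C\int_0^{\eps t}\|\nabla\rho^\eps\|_{H^{m-1}}^2}$, multiplied by $\eps^2$ and by the data terms plus the $\rho^\eps,q^\eps,v^\eps$-dependent terms of \eqref{2.17}.

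Next I rewrite every rescaled quantity in the original variables using \eqref{4.61}, with $s=t'/\eps$:
\[
\int_0^{\eps t}\|\nabla\rho^\eps(t')\|_{H^{m-1}}^2\,dt'=\eps\int_0^{t}\|\nabla\rho(s)\|_{H^{m-1}}^2\,ds,\qquad
\int_0^{\eps t}\|q^\eps(t')\|_{H^m}^2\,dt'=\frac1\eps\int_0^t\|q(s)\|_{H^m}^2\,ds .
\]
Similarly, $\int_0^{\eps t}\|\rho^\eps-1\|_{H^m}^2=\eps\int_0^t\|\rho-1\|_{H^m}^2$. The factor $\eps^2\|v^\eps\|_{H^m}^2=\|v\|_{H^m}^2$ is bounded by its supremum, and the sup norms of $\rho^\eps$ are simply sup norms of $\rho$ over $[0,t]$. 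Grouping $(1+\|\rho-1\|_{H^m}^2+\|v\|_{H^m}^2)(\|q\|^2+\|\rho-1\|^2)$ into the two products listed in $B(t)$, with each time-dependent factor replaced by its supremum, reproduces exactly the bracket in \eqref{Bt}.

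For the left-hand side, the change of variables gives $\int_0^{\eps t}\|\delta\rho^\eps\|_{H^m\cap\dot H^{-1}}^2\,dt'=\eps\int_0^t\|\rho(s)-\rho^*(\eps s)\|^2\,ds$, and the supremum over $[0,\eps t]$ becomes the supremum of $\|\rho(t')-\rho^*(\eps t')\|$ over $t'\in[0,t]$. This yields \eqref{2.21}. For \eqref{2.22}, I use $m-1>d/2$ and the embedding \eqref{F1.12}, which gives $\|\rho(t)-\rho^*(\eps t)\|_{L^\infty}\le C\|\rho(t)-\rho^*(\eps t)\|_{H^{m-1}}\le C\eps\sqrt{B(t)}$.

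The only delicate point is bookkeeping. I need every $\eps$-power to match after rescaling, in particular that the $\frac1\eps\int_0^t\|q\|^2$ term carries no extra $\eps^{-1}$. I also need the exponent in \eqref{4.65} to become exactly $C\eps\int_0^t\|\nabla\rho\|_{H^{m-1}}^2$. Both follow from the substitutions above, since the $1/\eps$ from $q^\eps=q/\eps$ squared is compensated by the Jacobian $\eps$.
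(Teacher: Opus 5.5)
Your proposal is exactly the paper's argument: substitute \eqref{2.17} into \eqref{4.65}, undo the scaling \eqref{4.61} term by term (your $\eps$-bookkeeping is correct), and use $H^{m-1}\hookrightarrow L^\infty$ to get \eqref{2.22}. One caveat concerns your claim that the grouping reproduces \eqref{Bt} \emph{exactly}: expanding the product in the last line of \eqref{2.17} also produces the mixed terms $\sup_{[0,t]}\|\rho-1\|_{H^m}^2\cdot\frac{1}{\eps}\int_0^t\|q\|_{H^m}^2\,dt'$ and $\sup_{[0,t]}\|v\|_{H^m}^2\cdot\eps\int_0^t\|\rho-1\|_{H^m}^2\,dt'$, which are not listed in $B(t)$ (the paper's one-line proof also passes over them), so strictly they should be added to $B(t)$ --- this is harmless, since in the bootstrap they are bounded by $Ce^{Ce^{CE_0^2}}E_0^2$ just like the quartic term already present, so \eqref{5.23} is unaffected.
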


\begin{proof}
From the transforms \eqref{4.61}, one readily sees that
\begin{align*}
\sup_{t'\in [0,\eps t]}\|(\rho^\eps-1)(t')\|_{H^m}^2&=\sup_{t'\in [0,t]}\|(\rho-1)(t')\|_{H^m}^2,\\
\int_0^{\eps t} \|(\rho^\eps-1)(t')\|_{H^m}^2\,dt'&=\eps \int_0^{t} \|(\rho-1)(t')\|_{H^m}^2\,dt',\\
\eps^2\sup_{t'\in [0,\eps t]}\|v^\eps(t')\|_{H^m}^2&=\sup_{t'\in [0,t]}\|v(t')\|_{H^m}^2,\\
\int_0^{\eps t} \| q^\eps(t')\|_{H^m}^2\,dt'&=\frac{1}{\eps}\int_0^{t} \|q(t')\|_{H^m}^2\,dt',\\
\sup_{t'\in [0, \eps t]}\|(\rho^\eps-\rho^*)(t')\|_{H^{m-1}\cap\dot{H}^{-1}}^2&=\sup_{t'\in [0,  t]}\|\rho(t')-\rho^*(\eps t')\|_{H^{m-1}\cap\dot{H}^{-1}}^2,\\
\int_0^{\eps t} \|(\rho^\eps-\rho^*)(t')\|_{H^{m}\cap\dot{H}^{-1}}^2\,dt'&=\eps \int_0^{t} \|\rho(t')-\rho^*(\eps t')\|_{H^{m}\cap\dot{H}^{-1}}^2\,dt'.
\end{align*}
Therefore, with the estimates \eqref{4.65} and \eqref{2.17}, we obtain \eqref{2.21}. By the Sobolev embedding $H^{m-1}\hookrightarrow L^\infty$, this yields \eqref{2.22}.
\end{proof}


\section{Proof of Theorem 1.1}\label{section:EP3}
\subsection{Bootstrap}
For fixed $0<\eps<1$, one can refer to, e.g., \cite{11,Y1} for the local existence result of a solution $(\rho,q,\phi)$ to the Cauchy problem \eqref{Euler}--\eqref{Eulerd}. Let 
$T_\eps$ be the maximal time of local existence. For  $T\in[0,T_\varepsilon)$, we denote by $C_i$ positive constants depending only on $(\rho_1,\rho_2)$, $d$, $a$ and $m$, but independent of $T$, $\varepsilon$ and $E_0$. We introduce
\[
E_T=\sup_{t'\in[0,T]}\big(\|\rho(t')-1\|_{H^m}+\|\nabla\phi(t')\|_{L^{2}}+\|q(t')\|_{H^m}\big)
+\left(\frac{1}{\varepsilon }\int_0^T \|q(t')\|_{H^m}^2\,dt'\right)^{1/2}.
\]
We aim to justify that if
\begin{align}\label{smallness-main}
e^{\bar{C}e^{\bar{C}E_0^2}}E_0\eps\leq \delta_0
\end{align}
for some sufficiently large constant $\bar{C}>0$ and some sufficiently small constant $\delta_0>0$ independent of $E_0$ and $\eps$, and if
\begin{align}
&E_T\leq C_* E_0,\label{apriori}\\
&\tilde{\rho}_1\leq \rho(t,x) \leq \tilde{\rho}_2,\quad (t,x)\in [0,T]\times \mathbb{R}^d,\label{aprioriupper}
\end{align}
then we have
\begin{align}
&E_T\leq \frac{1}{2}C_* E_0,\label{apriori:e}\\
&\frac{5}{4}\tilde{\rho}_1\leq \rho(t,x) \leq \frac{3}{4}\tilde{\rho}_2,\quad (t,x)\in [0,T]\times \mathbb{R}^d.\label{lowupper}
\end{align}
Here, we set
\begin{align}\label{tilderho12}
\tilde{\rho}_1:=\frac{2}{5}\rho_1,\quad \tilde{\rho}_2:=2\rho_2.
\end{align}
The constants $\delta_0>0$ and $C_*>1$ will be fixed below. Consequently, we can apply a bootstrap argument to prove $T_\eps=\infty$.

 The proof of \eqref{apriori:e}-\eqref{lowupper} involves several levels of energy-dissipation estimates. The analysis relies strongly on the damping dissipation of $q$ and the limiting solution of the drift--diffusion system.

 \begin{itemize}
     \item {\emph{Step 1: $H^{m-1}$-energy estimates.}}
 \end{itemize}

 Under \eqref{apriori}--\eqref{aprioriupper}, we have
\begin{align}
\|v\|_{H^m}\leq C_1(1+\|\rho-1\|_{H^m})\|q\|_{H^m}.
\end{align}
Consequently, it holds
\begin{align}
&\quad \varepsilon\int_0^T \|q(t')\|_{H^m}^2 \|v(t')\|_{H^m}^2 \,dt'\nonumber\\
&\leq C_1(1+\sup_{t'\in[0,T]}\|\rho(t')-1\|_{H^m})^2
\sup_{t'\in[0,T]}\|q(t')\|_{H^m}^2
\varepsilon\int_0^T \|q(t')\|_{H^m}^2\,dt'\nonumber\\
&\leq C_1(1+C_*E_0)^2 C_*^4 E_0^4 \eps^2.\label{q2}
\end{align}
Then, it follows from \eqref{L2}, \eqref{Hk-energy-int} and \eqref{q2} that
\begin{equation}\label{energym-1}
\begin{aligned}
&\quad\sup_{t\in[0,T]}\|(\rho-1,q)(t)\|_{H^{m-1}}^2
+\frac{1}{\eps}\int_0^T \|q(t')\|_{H^{m-1}}^2\,dt'\\
&\leq C_2\Big(\|(\rho_0-1,q_0)\|_{H^{m-1}}^2
+(1+C_*E_0)^2 C_*^4 E_0^4 \eps^2\\
&\quad+\eps \int_0^T \|\nabla\phi(t')\|_{L^\infty}^2
\|\rho(t')-1\|_{H^{m-1}}^2\,dt'\Big).
\end{aligned}
\end{equation}

 \begin{itemize}
     \item {\emph{Step 2:  $L^2(0,T;L^\infty)$-estimate of $\nabla\phi$ and dissipation estimate of $\rho$.}}
 \end{itemize}

To analyze the last term on the right-hand side of \eqref{energym-1}, the key ingredient is to establish a uniform weighted $L^2(0,T;L^\infty)$-estimate for $\nabla\phi$. For $d\le 5$, in view of the Gagliardo--Nirenberg inequality (Lemma \ref{Lt9}), we have
\begin{align*}
\|\nabla\phi\|_{L^\infty}^2
\le C_3\|\nabla\phi\|_{L^2}^{2-\frac d3}
\|\nabla\phi\|_{\dot{H}^3}^{\frac d3}
\le C_3\big(\|\nabla\phi\|_{L^2}^2+\|\rho-1\|_{\dot{H}^2}^2\big).
\end{align*}
According to the dissipation estimates \eqref{rhoL21} and \eqref{rhoL22} for $\rho$, the bootstrap assumption \eqref{apriori}, and \eqref{q2}, one has
\begin{equation*}
\begin{aligned}
&\quad \varepsilon \int_0^T\|\nabla\phi(t')\|_{L^\infty}^2\,dt'\\
&\leq C_4 \big(\|\rho_0-1\|_{L^2}^2+\|\nabla\phi_0\|_{L^2}^2+\|q_0\|_{L^2}^2\big)\\
&\quad
+C_4\varepsilon^2 \Big(\sup_{t'\in[0,T]}\|\dive q(t')\|_{L^2}^2
+\frac{1}{\varepsilon} \int_0^T \|\dive q(t')\|_{L^2}^2\,dt'\Big)\\
&\quad+C_4\varepsilon\int_0^T \|q(t')\|_{H^{m}}^2 \|v(t')\|_{H^m}^2 \,dt' \\
&\leq C_4 \big(E_0^2+C_*^2E_0^2\eps^2+(1+C_*E_0)^2 C_*^4 E_0^4\eps^2\big).
\end{aligned}
\end{equation*}
We now impose the following smallness condition:
\begin{align}\label{delta1}
C_*E_0\eps\leq 1,
\qquad
(1+C_*E_0)C_*^2E_0\eps\leq 1.
\end{align}
Hence
\begin{equation}\label{LinftyEPs}
\varepsilon \int_0^T\|\nabla\phi(t')\|_{L^\infty}^2\,dt'
\leq C_5(1+E_0^2).
\end{equation}
Such a bound plays a key role in higher-order estimates. Applying Gr\"onwall's lemma to \eqref{energym-1} and using \eqref{q2}, \eqref{delta1} and \eqref{LinftyEPs}, we obtain
\begin{equation}\label{5.141}
\begin{aligned}
&\quad\sup_{t'\in[0,T]}\|(\rho-1,q)(t')\|_{H^{m-1}}^2
+\frac{1}{\eps}\int_0^T \|q(t')\|_{H^{m-1}}^2\,dt'
\leq C_6e^{C_6E_0^2}E_0^2.
\end{aligned}
\end{equation}

Moreover, together with \eqref{q2}, \eqref{delta1}, \eqref{LinftyEPs} and \eqref{5.141}, it follows from \eqref{rhoL21}--\eqref{nablarhok} in Lemma \ref{lemma4.4} that
\begin{equation}\label{bound:L2n}
\begin{aligned}
&\quad\eps\int_0^T \big(\|(\rho-1)(t')\|_{H^{m}}^2+\|\nabla\phi(t')\|_{L^{2}}^2\big)\,dt'\\
&\leq C_7e^{C_7\varepsilon \int_0^T \|\nabla\phi(t')\|_{L^\infty}^2\,dt'}
\bigg(\|\rho_0-1\|_{H^{m-1}}^2+\|\dive q_0\|_{H^{m-1}}^2\\
&\quad+\varepsilon^2 \Big(\sup_{t'\in[0,T]}\| q(t')\|_{H^{m}}^2
+\frac{1}{\varepsilon} \int_0^T \| q(t')\|_{H^{m}}^2\,dt'\Big)\\
&\quad+\varepsilon\int_0^T\|q(t')\|_{H^{m}}^2 \|v(t')\|_{H^{m}}^2\,dt'\bigg)\\
&\leq C_8 e^{C_8E_0^2}(1+C_*^2\eps^2)E_0^2,
\end{aligned}
\end{equation}
which will be used to establish the highest-order energy estimates of the solution. Note that the above dissipation of $\rho$ does not appear in the definition of $E_T$, so we do not need to include the quantity in the bootstrap assumptions.

 \begin{itemize}
     \item {\emph{Step 3:  Highest-order energy estimates.}}
 \end{itemize}

For the highest-order energy estimate from Lemma \ref{lemma45}, we impose
\begin{align}\label{delta2}
C_*E_0\eps\leq \delta_0^*,
\end{align}
with some sufficiently small uniform constant $\delta_0^*$ such that 
\begin{align}\label{5.14}
1-C_9\eps \|\nabla v(t)\|_{H^{m-1}}\geq1- C_9 C_* E_0 \eps \geq \frac{1}{2}.
\end{align}
Therefore, Lemma \ref{lemma45} implies
\begin{equation}\label{Hkkk}
\begin{aligned}
&\quad \sup_{t\in[0,T]}\|(n,v)(t)\|_{\dot{H}^m}^2
+\frac{1}{\eps}\int_0^T \|v(t')\|_{\dot{H}^m}^2\,dt'\\
&\leq C_{10}\Big(\|(n_0,v_0)\|_{\dot{H}^m}^2
+\frac{1}{\eps}\int_0^T \|v(t')\|_{H^{m-1}}^2\,dt'
+\varepsilon\int_0^T \|n(t')\|_{H^m}^4\,dt' \Big).
\end{aligned}
\end{equation}
Since
\[
\rho_1\le \rho_0\le \rho_2,\qquad
v_0=\left(\frac{1}{\rho_0}-1\right)q_0+q_0,
\]
by Moser-type inequalities \eqref{F1.1311}, we have
\begin{equation}\label{3.1}
\|n_0\|_{H^m}+\|v_0\|_{H^m}\le C_{11}E_0(1+E_0).
\end{equation}
Similarly, due to \eqref{aprioriupper} and the estimates of composite functions (see \eqref{F1.15}), we have $\|n\|_{\dot H^k}\sim \|\rho-1\|_{\dot H^k}$ and
\begin{align}
&\quad \sup_{t'\in[0,T]}\|q(t')\|_{H^m}^2
+\frac{1}{\eps}\int_0^T \|q(t')\|_{H^m}^2\,dt'\nonumber\\
&\leq C_{12}
\Big(1+\sup_{t'\in[0,T]}\|\rho(t')-1\|_{H^{m-1}}^2\Big)
\Big(\sup_{t'\in[0,T]}\|v(t')\|_{H^m}^2
+\frac{1}{\eps}\int_0^T \|v(t')\|_{H^m}^2\,dt'\Big).\label{3.11}
\end{align}
Furthermore,
\begin{align}\label{3.1111}
\frac{1}{\eps}\int_0^T \|v(t')\|_{H^{m-1}}^2\,dt'
\leq C_{13}
\Big(1+\sup_{t'\in[0,T]}\|\rho(t')-1\|_{H^{m-1}}^2\Big)
\frac{1}{\eps}\int_0^T \|q(t')\|_{H^{m-1}}^2\,dt'.
\end{align}
Combining \eqref{5.141}, \eqref{Hkkk} and \eqref{3.1}--\eqref{3.1111}, we arrive at
\begin{align*}
&\quad \sup_{t'\in[0,T]}\big(\|\rho(t')-1\|_{\dot{H}^m}^2+\|q(t')\|_{\dot{H}^m}^2\big)
+\frac{1}{\eps}\int_0^T \|q(t')\|_{\dot{H}^m}^2\,dt'\\
&\leq C_{14}\Big(1+e^{C_{14}(1+E_0^2)}E_0^2\Big)
\Big(E_0^2(1+E_0)^2
+\frac{1}{\eps}\int_0^T\|q(t')\|_{H^{m-1}}^2\,dt'\\
&\quad
+  \varepsilon \int_0^T \|\rho(t')-1\|_{H^{m}}^4\,dt'\Big).
\end{align*}
Note that the $L^2(0,T;H^{m})$-bound of $\rho-1$ has been obtained in \eqref{bound:L2n}.  Using Gr\"onwall's inequality, \eqref{5.141}, \eqref{bound:L2n}, and the elementary fact that
$s\le 1+e^s\le e^{e^s}$ for $s\ge0$, we obtain
\begin{align*}
&\quad \sup_{t'\in[0,T]}\big(\|\rho(t')-1\|_{\dot{H}^m}^2+\|q(t')\|_{\dot{H}^m}^2\big)
+\frac{1}{\eps}\int_0^T \|q(t')\|_{\dot{H}^m}^2\,dt'\\
&\leq C_{15}e^{C_{15}\big(1+e^{C_{15}(1+E_0^2)}E_0^2\big)
\eps\int_0^T\|\rho(t')-1\|_{H^{m}}^2\,dt'}
\Big(1+e^{C_{15}(1+E_0^2)}E_0^2\Big)
\Big(E_0^2(1+E_0)^2
+\frac{1}{\eps}\int_0^T\|q(t')\|_{H^{m-1}}^2\,dt'\Big)\\
&\leq C_{16} e^{C_{16}e^{C_{16}E_0^2}(E_0^2+\eps^2 C_*^2E_0^2)} E_0^2.
\end{align*}
Together with \eqref{delta1} and \eqref{5.141}, this implies the bound of $E_T$:
\begin{align}\label{ET:ess}
E_T\leq C_{17} e^{C_{17}e^{C_{17}E_0^2}E_0^2}E_0.
\end{align}
Now we choose
\begin{align}\label{C*}
C_*:=2C_{17} e^{C_{17}e^{C_{17}E_0^2} E_0^2}.
\end{align}
Then \eqref{ET:ess} gives the desired improved estimate \eqref{apriori:e}.

 \begin{itemize}
     \item{\emph{Step 4:  Upper and lower bound of $\rho$}}
 \end{itemize}

Now, we justify \eqref{lowupper}, which relies strongly on the limiting solution of the drift--diffusion system. Indeed, based on the Sobolev regularity estimates obtained before, one can control the error estimate \eqref{2.21} between  $\rho^\eps(t,x)$ and $\rho^*(\eps t,x)$, which, together with the maximum principle of the limiting solution $\rho^*$,  leads to the upper and lower bounds of $\rho$. 

Since \eqref{5.141}, \eqref{bound:L2n} and  \eqref{ET:ess} have been established, and $\rho^*$ with the same datum $\rho_0$ satisfies \eqref{exp1}, it follows from the definition of $B(t)$ in \eqref{Bt} that
\begin{align}\label{5.23}
B(T)\leq C_{18} e^{C_{18} e^{C_{18} E_0^2}} E_0^2.
\end{align}
Then, the $L^\infty$ estimate \eqref{2.22} for $\rho$ in Lemma \ref{lem2.6} ensures that
\begin{align*}
|\rho(t,x)-\rho^*(\eps t,x)|\leq
C_{19}e^{C_{19}e^{C_{19}E_0^2}}E_0\eps,\quad (t,x)\in [0,T]\times\mathbb{R}^d.
\end{align*}
Together with the maximum principle for $\rho^*$ in \eqref{1.16} and the definitions of $\tilde{\rho}_1$ and $\tilde{\rho}_2$ in   \eqref{tilderho12}, this implies the upper and lower bounds of $\rho$ in \eqref{lowupper} provided that
\begin{align}\label{delta3}
C_{20}e^{C_{20}e^{C_{20}E_0^2}}E_0\eps
\leq \frac12\min\{\rho_1,\rho_2\}.
\end{align}

Since $C_*$ is defined by \eqref{C*}, we can verify that, by choosing $\delta_0>0$ sufficiently small and $\bar{C}$ large, the assumption \eqref{smallness-main} implies the auxiliary conditions \eqref{delta1}, \eqref{delta2} and \eqref{delta3}.

The {\emph{a priori}} bounds \eqref{apriori:e} and \eqref{lowupper} allow us to extend the local solution globally in time. Indeed, suppose by contradiction that $T_\eps<\infty$. Then \eqref{apriori:e} gives a uniform bound of $(\rho,q,\phi)$ in the regularity class required by the local well-posedness theory on $[0,T_\eps)$, while \eqref{lowupper} ensures that the density remains uniformly away from vacuum. Hence, for any $t_0<T_\eps$ sufficiently close to $T_\eps$, the local existence result applied with initial data $(\rho,q,\phi)(t_0)$ yields a continuation of the solution beyond $T_\eps$, which contradicts the maximality of $T_\eps$. Therefore, $T_\eps=\infty$. This completes the global existence of Theorem \ref{theorem1.1}.


\subsection{Exponential stability}\label{section:EP4}

Owing to \eqref{ET:ess}, we obtain the following uniform estimate
\begin{equation}\label{r1}
\begin{aligned}
&\sup_{t'\in \mathbb{R}_+}\big( \|\rho(t')-1\|_{H^m}^2+\|q(t')\|_{H^m}^2+\|\nabla\phi(t')\|_{L^2}^2\big)\\
&\quad +\int_0^\infty\big( \eps\|\rho(t')-1\|_{H^m}^2+\frac{1}{\eps}\|q(t')\|_{H^m}^2+\eps\|\nabla\phi(t')\|_{L^2}^2\big)\,dt'\leq CE_0^2,
\end{aligned}
\end{equation}
where $C>0$ denotes a constant independent of $\eps$.

We define the low-order energy 
\begin{equation}
\begin{aligned}
\mathcal{E}_1(t):&=\int_{\R^d}\Big(E(\rho,v)+\frac12|\nabla\phi|^2\Big)\,dx\\
&\quad+ \sum_{1\leq |\alpha|\leq m-1}\Big(a^2\|\partial^\alpha \rho\|_{L^2}^2+\|\Lambda^{-1}\partial^\alpha\rho\|_{L^2}^2+\|\partial^\alpha q\|_{L^2}^2\Big),
\end{aligned}
\end{equation}
and the highest-order energy
\begin{align*}
\mathcal{E}_2(t):=\sum_{|\alpha|=m}\Big(\|\partial^\alpha n\|_{L^2}^2+\|\partial^\alpha v\|_{L^2}^2+\frac{1}{a^2}\|\Lambda^{-1}\partial^\alpha n\|_{L^2}^2\Big),
\end{align*}
where $n$ is given by \eqref{def:nv}. It thus follows from \eqref{3.13}, \eqref{ddtHk} and \eqref{Em-top} that
\begin{align*}
&\quad\frac{d}{dt}\Big( \mathcal{E}_1(t)+\beta_1 \mathcal{E}_2(t) \Big)+\frac{1}{\eps}\int_{\mathbb{R}^d}\rho |v|^2\,dx+\frac{1}{\eps}\sum_{1\leq |\alpha|\leq m-1}\|\partial^\alpha q\|_{L^2}^2-\frac{C\beta_1}{\eps}\|q\|_{H^{m-1}}^2\\
&\quad\quad+\frac{\beta_1}{\varepsilon}(1-C \eps\|\nabla v\|_{L^\infty})\sum_{|\alpha|=m}\|\partial^\alpha v\|_{L^2}^2\\
&\leq C(1+\beta_1)\eps \|n\|_{H^m}^4.
\end{align*}
To derive the dissipation of $\rho$, we also introduce the corrector
\begin{align*}
\mathcal{E}_3(t):&=\|\Lambda^{-1}(\rho-1)\|_{L^2}^2+\eps \langle q, \Lambda^{-2}\grad (\rho-1)\rangle+\|\rho-1\|_{L^2}^2-\eps \langle \dive q, \rho-1\rangle\\
&+\sum_{1\leq |\alpha|\leq m-1}\Big(\|\partial^\alpha\rho\|_{L^2}^2-\eps \langle \dive \partial^\alpha q, \partial^\alpha \rho\rangle \Big).
\end{align*}
Thus, after similar calculations as in Lemma \ref{lemma4.4}, we can deduce the following inequality:
\begin{align*}
&\quad\frac{d}{dt}\mathcal{E}_3(t)+\eps a^2 \|\Lambda^{-1}\nabla (\rho-1)\|_{L^2}^2+\eps \int_{\mathbb{R}^d} \rho |\Lambda^{-2}\nabla(\rho-1)|^2\, dx-\eps\|\Lambda^{-1}\dive q\|_{L^2}^2\\
&\quad\quad+\eps a^2 \|\nabla \rho\|_{L^2}^2+\eps \|\rho-1\|_{L^2}^2+\eps \int_{\mathbb{R}^d} \rho(\rho-1)^2\,dx-\eps\|\dive q\|_{L^2}^2\\
&\quad\quad+\eps a^2 \sum_{1\leq |\alpha|\leq m-1}\|\nabla \partial^\alpha \rho\|_{L^2}^2+\eps \sum_{1\leq |\alpha|\leq m-1} (\|\partial^\alpha \rho\|_{L^2}^2-\|\dive \partial^\alpha  q\|_{L^2}^2)\\
&\leq C\eps \|q\|_{H^m}^2 \|v\|_{H^m}^2+C\eps\|\nabla\Lambda^{-2} (\rho-1)\|_{L^\infty}^2 \|\rho-1\|_{H^m}^2.
\end{align*}

Now we define the functional
\begin{align*}
\mathcal{E}(t):=\mathcal{E}_1(t)+\beta_1\mathcal{E}_2(t)+\beta_2\mathcal{E}_3(t).
\end{align*}
Employing \eqref{lowupper0}, \eqref{r1} and \eqref{5.14} and choosing two suitably small positive constants $\beta_1, \beta_2$, we can find uniform-in-time constants $\lambda^*>0$ and $C^*>1$ such that
\begin{equation}\label{sim}
\begin{aligned}
 \frac{1}{C^*} \mathcal{E}(t) \leq \|\rho(t)-1\|_{H^m}^2+\|q(t)\|_{H^m}^2+\|\nabla\phi(t)\|_{L^2}^2\leq C^* \mathcal{E}(t)
\end{aligned}
\end{equation}
and
\begin{equation}\label{LY}
\begin{aligned}
\frac{d}{dt}\mathcal{E}(t)+2\lambda^* \eps \mathcal{E}(t)+\frac{\lambda^*}{\eps}\|q\|_{H^m}^2\leq C\eps \big(\|\nabla\Lambda^{-2} (\rho-1)\|_{L^\infty}^2+\|q\|_{H^m}^2\big)\mathcal{E}(t).
\end{aligned}
\end{equation}
This yields
\begin{equation}
\begin{aligned}
&\quad\frac{d}{dt}\Big(e^{\lambda^*\eps t}\mathcal{E}(t)\Big)+\lambda^* \eps  e^{\lambda^*\eps t}\mathcal{E}(t)+\frac{\lambda^*}{\eps}e^{\lambda^*\eps t}\|q\|_{H^m}^2\\
&\leq C\eps \big(\|\nabla\Lambda^{-2} (\rho-1)\|_{L^\infty}^2+\|q\|_{H^m}^2\big)e^{\lambda^*\eps t} \mathcal{E}(t).
\end{aligned}
\end{equation}
Since $\|\nabla\Lambda^{-2} (\rho-1)\|_{L^\infty}^2+\|q\|_{H^m}^2\in L^1(\mathbb{R}_+)$, Gr\"onwall's inequality gives rise to
\begin{equation}
\begin{aligned}\label{Etexp}
&\quad e^{\lambda^*\eps t}\mathcal{E}(t)+\int_0^t e^{\lambda^*\eps t'}\big(\eps \mathcal{E}(t')+\frac{1}{\eps}\|q(t')\|_{H^m}^2\big)\,dt'\\
&\leq e^{C\eps \int_0^\infty\big(\|\nabla\Lambda^{-2}(\rho-1)(t')\|_{L^\infty}^2+\|q(t')\|_{H^m}^2\big)\,dt'}\mathcal{E}(0).
\end{aligned}
\end{equation}
In view of \eqref{sim} and \eqref{Etexp}, we arrive at \eqref{exp:EP} and finish the proof of Theorem \ref{theorem1.1}.

\section{Proof of Theorem \ref{theorem1.3}}\label{section:error}

Combining \eqref{exp:EP1} with the unweighted estimates obtained in Lemmas \ref{lemma4555} and \ref{lem2.5}, we first have
\begin{equation}
\begin{aligned}\label{6.9}
&\quad \sup_{t'\in \mathbb{R}_+}\|(\rho^\eps-\rho^*)(t')\|_{H^{m-1}\cap\dot{H}^{-1}}^2\\
&+\int_0^\infty \big(\|(\rho^\eps-\rho^*)(t')\|_{H^m\cap\dot{H}^{-1}}^2+\|Z^\eps(t')\|_{H^{m-1}}^2\big)\, dt'
\le C\varepsilon ^2 E_0^2.
\end{aligned}
\end{equation}
Now we notice that
\begin{align*}
q^\eps-q^*-q_L^\eps=Z^\eps-a^2\nabla(\rho^\eps-\rho^*)-\big(\rho^\eps \nabla \Lambda^{-2}(\rho^\eps-1)-\rho^*\nabla\Lambda^{-2}(\rho^*-1)\big)+q_{L,1}^\eps,
\end{align*}
which implies 
\begin{equation*}
\begin{aligned}
&\quad\int_0^\infty \|(q^\eps-q^*-q_L^\eps)(t')\|_{H^{m-1}}^2\,dt'\\
&\le C\int_0^\infty \big(\|Z^\eps(t')\|_{H^{m-1}}^2
+\|\nabla(\rho^\eps-\rho^*)(t')\|_{H^{m-1}}^2\\
&\quad+\|(\rho^\eps\nabla\Lambda^{-2}(\rho^\eps-1)-\rho^*\nabla\Lambda^{-2}(\rho^*-1))(t')\|_{H^{m-1}}^2
+\|q_{L,1}^\eps(t')\|_{H^{m-1}}^2\big)\,dt'.
\end{aligned}
\end{equation*}
Here, the Moser product law, together with \eqref{6.9}, indicates that
\begin{equation*}
\begin{aligned}
&\quad\int_{0}^{\infty} \|(\rho^\eps \nabla \Lambda^{-2}(\rho^\eps-1)-\rho^*\nabla\Lambda^{-2}(\rho^*-1))(t')\|_{H^{m-1}}^2 dt'\\
&\leq 2\int_{0}^{\infty} \big(\|\big((\rho^\eps-\rho^*)\nabla \Lambda^{-2}(\rho^\eps-1)\big)(t')\|_{H^{m-1}}^2+\|\rho^*\nabla \Lambda^{-2}(\rho^\eps-\rho^*)(t')\|_{H^{m-1}}^2 \big)\,dt'\\
&\leq C\sup_{t'\in \mathbb{R}_+}\|(\rho^\eps-\rho^*)(t')\|_{H^{m-1}}^2 \int_{0}^{\infty}\|\nabla\phi^\eps(t')\|_{H^{m-1}}^2\,dt'\\
&\quad+C(1+\sup_{t'\in  \mathbb{R}_+}\|\rho^*(t')-1\|_{H^{m-1}}^2)\int_{0}^{\infty}\|\nabla (\phi^\eps-\phi^*)(t')\|_{H^{m-1}}^2\,dt'\leq CE_0^2 \eps^2.
\end{aligned}
\end{equation*}
Therefore, together with \eqref{tlldeqL2} and \eqref{6.9}, it holds
\begin{align}\label{step:113ff}
    \int_0^\infty \|(q^\eps-q^*-q_L^\eps)(t')\|_{H^{m-1}}^2\,dt'\leq CE_0^2 \eps^2.
\end{align}

We now establish the improved exponential stability estimates. Let $\lambda_1>0$ be a small uniform constant such that
\[
\lambda_1\leq \min\left\{\lambda_0,\lambda_0^*,\frac12\right\}.
\]
In particular, since $0<\eps\le1$, we have $\lambda_1\le 1/(2\eps^2)$.
Since $Z^\eps(0)=0$, multiplying the differential form of \eqref{Z-energy-raw} by
$e^{\lambda_1t}$ and integrating over $[0,t]$, we obtain
\begin{equation}
\begin{aligned}\label{6.12}
&\quad \frac12 e^{\lambda_1 t}\|Z^\eps(t)\|_{H^{m-1}}^2
+\left(\frac{1}{\eps^2}-\frac{\lambda_1}{2}\right)
\int_0^t e^{\lambda_1 t'}\|Z^\eps(t')\|_{H^{m-1}}^2\,dt'\\
&=\int_0^t e^{\lambda_1 t'}\big\langle a^2\nabla\partial_t\rho^\eps,Z^\eps\big\rangle_{H^{m-1}}\,dt'
+\int_0^t e^{\lambda_1 t'}\big\langle \tilde f^\eps,Z^\eps\big\rangle_{H^{m-1}}\,dt'.
\end{aligned}
\end{equation}
We first handle the coupling term. Recalling the definition of $Z^\eps$,  we write
\begin{align*}
&\quad \int_0^t e^{\lambda_1 t'}
\big\langle a^2\nabla\partial_t\rho^\eps,Z^\eps\big\rangle_{H^{m-1}}\,dt'\\
&=\int_0^t e^{\lambda_1 t'}
\big\langle a^2\nabla\partial_t\rho^\eps,a^2\nabla\rho^\eps\big\rangle_{H^{m-1}}\,dt'\\
&\quad-\int_0^t e^{\lambda_1 t'}
\big\langle a^2\nabla\partial_t\rho^\eps,q_{L,1}^\eps\big\rangle_{H^{m-1}}\,dt'+\int_0^t e^{\lambda_1 t'}
\big\langle a^2\nabla\partial_t\rho^\eps,
q^\eps-q_L^\eps+\rho^\eps\nabla\Lambda^{-2}(\rho^\eps-1)
\big\rangle_{H^{m-1}}\,dt'.
\end{align*}
For the first term, we have
\begin{align*}
&\quad \int_0^t e^{\lambda_1 t'}
\big\langle a^2\nabla\partial_t\rho^\eps,a^2\nabla\rho^\eps\big\rangle_{H^{m-1}}\,dt'\\
&=\frac{a^4}{2}e^{\lambda_1 t}\|\nabla\rho^\eps(t)\|_{H^{m-1}}^2
-\frac{a^4}{2}\|\nabla\rho_0\|_{H^{m-1}}^2
-\frac{a^4\lambda_1}{2}\int_0^t e^{\lambda_1 t'}
\|\nabla\rho^\eps(t')\|_{H^{m-1}}^2\,dt'\\
&\le C e^{\lambda_1 t}\|\rho^\eps(t)-1\|_{H^m}^2.
\end{align*}
For the term involving $q_{L,1}^\eps$, integrating by parts in time gives
\begin{align*}
&\quad \left|\int_0^t e^{\lambda_1 t'}
\big\langle a^2\nabla\partial_t\rho^\eps,q_{L,1}^\eps\big\rangle_{H^{m-1}}\,dt'\right|\\
&\le
\left|e^{\lambda_1 t}\big\langle a^2\nabla\rho^\eps(t),q_{L,1}^\eps(t)\big\rangle_{H^{m-1}}
-\big\langle a^2\nabla\rho_0,q_{L,1}^\eps(0)\big\rangle_{H^{m-1}}\right|\\
&\quad+\int_0^t e^{\lambda_1 t'}
\left|\big\langle a^2\nabla\rho^\eps,
\partial_t q_{L,1}^\eps\big\rangle_{H^{m-1}}\right|\,dt'
+\lambda_1\int_0^t e^{\lambda_1 t'}
\left|\big\langle a^2\nabla\rho^\eps,q_{L,1}^\eps\big\rangle_{H^{m-1}}\right|\,dt'\\
&\le C\sup_{0\le t'\le t}e^{\lambda_1 t'}\|\rho^\eps(t')-1\|_{H^m}^2+C(1+\|\rho_0-1\|_{H^{m-1}}^2)
\bigl(\|\nabla\phi_0\|_{L^2}^2+\|\rho_0-1\|_{H^m}^2\bigr).
\end{align*}
For the remaining part of the coupling term, using
$\partial_t\rho^\eps=-\dive q^\eps$ and integrating by parts, we obtain
\begin{align*}
&\quad\left|
\big\langle a^2\nabla\partial_t\rho^\eps,
q^\eps-q_L^\eps+\rho^\eps\nabla\Lambda^{-2}(\rho^\eps-1)
\big\rangle_{H^{m-1}}
\right|\\
&\le C\|q^\eps\|_{H^m}^2
+C\|q_L^\eps\|_{H^m}^2
+C\|\dive(\rho^\eps\nabla\Lambda^{-2}(\rho^\eps-1))\|_{H^{m-1}}^2\\
&\le C\|q^\eps\|_{H^m}^2
+C\|q_L^\eps\|_{H^m}^2
+C(1+\|\rho^\eps-1\|_{H^m}^2)\|\rho^\eps-1\|_{H^m}^2.
\end{align*}
Therefore,
\begin{align}
&\quad \int_0^t e^{\lambda_1 t'}
\big\langle a^2\nabla\partial_t\rho^\eps,Z^\eps\big\rangle_{H^{m-1}}\,dt' \nonumber\\
&\le C\sup_{0\le t'\le t}e^{\lambda_1 t'}\|\rho^\eps(t')-1\|_{H^m}^2
+C\int_0^t e^{\lambda_1 t'}\|q^\eps(t')\|_{H^m}^2\,dt' \nonumber\\
&\quad+C\int_0^t e^{\lambda_1 t'}\|q_L^\eps(t')\|_{H^m}^2\,dt'
+C\int_0^t e^{\lambda_1 t'}
(1+\|\rho^\eps(t')-1\|_{H^m}^2)
\|\rho^\eps(t')-1\|_{H^m}^2\,dt' \nonumber\\
&\quad+C(1+\|\rho_0-1\|_{H^{m-1}}^2)
\bigl(\|\nabla\phi_0\|_{L^2}^2+\|\rho_0-1\|_{H^m}^2\bigr).
\label{coupling-exp}
\end{align}
Next, using Young's inequality, we get
\begin{align}
&\quad \left|\int_0^t e^{\lambda_1 t'}
\big\langle \tilde f^\eps,Z^\eps\big\rangle_{H^{m-1}}\,dt'\right| \nonumber\\
&\le \frac{1}{4\eps^2}
\int_0^t e^{\lambda_1 t'}\|Z^\eps(t')\|_{H^{m-1}}^2\,dt'
+C\eps^2\int_0^t e^{\lambda_1 t'}
\|\tilde f^\eps(t')\|_{H^{m-1}}^2\,dt'.
\label{fZ-exp}
\end{align}
By \eqref{f-tilde-final}, we have
\begin{align}
\eps^2\|\tilde f^\eps\|_{H^{m-1}}^2
\le
C\eps^2(1+\|\rho^\eps-1\|_{H^m}^2)\|q^\eps\|_{H^m}^2
+C\eps^2\|q^\eps\|_{H^m}^2\|v^\eps\|_{H^m}^2.
\label{f-tilde-exp}
\end{align}
Combining \eqref{6.12}--\eqref{f-tilde-exp}, using
$\lambda_1\le 1/(2\eps^2)$, and taking the supremum in time, we infer
\begin{equation}\label{2.1700}
\begin{aligned}
&\quad \sup_{t'\in\mathbb{R}_+}e^{\lambda_1 t'}\|Z^\eps(t')\|_{H^{m-1}}^2
+\frac{1}{\eps^2}\int_0^{\infty} e^{\lambda_1 t'} \|Z^\eps(t')\|_{H^{m-1}}^2\,dt'\\
&\leq C\sup_{t'\in\mathbb{R}_+}e^{\lambda_1 t'}\|\rho^\eps(t')-1\|_{H^m}^2+C\int_0^\infty e^{\lambda_1t'}\|q_L^\eps(t')\|_{H^m}^2\,dt'\\
&\quad+C\int_0^{\infty}e^{\lambda_1 t'}
\bigl(\|q^\eps(t')\|_{H^m}^2+\|\rho^\eps(t')-1\|_{H^m}^2\bigr)\,dt'\\
&\quad+C(1+\|\rho_0-1\|_{H^{m-1}}^2)
\bigl(\|\nabla\phi_0\|_{L^2}^2+\|\rho_0-1\|_{H^m}^2\bigr).
\end{aligned}
\end{equation}
Here we have used the uniform bounds in \eqref{exp:EP1} to absorb the quadratic
coefficients generated by \eqref{f-tilde-exp}. Moreover,
\begin{align*}
&\int_0^\infty e^{\lambda_1 t'} \|q_{L}^\eps(t')\|_{H^m}\,dt'
\leq
\int_0^\infty e^{-(\frac{1}{\eps^2}-\lambda_1)t'}\,dt'\,
\frac{1}{\eps}\|q_0\|_{H^m}
\leq C\eps \|q_0\|_{H^m},\\
&\int_0^\infty e^{\lambda_1 t'} \|q_{L}^\eps(t')\|_{H^m}^2\,dt'
\leq
\int_0^\infty e^{-(\frac{2}{\eps^2}-\lambda_1)t'}\,dt'\,
\frac{1}{\eps^2}\|q_0\|_{H^m}^2
\leq C \|q_0\|_{H^m}^2,\\
&\int_0^\infty e^{\lambda_1 t'} \|q_{L,1}^\eps(t')\|_{H^{m-1}}^2\,dt'
\leq
C\eps^2
(1+\|\rho_0-1\|_{H^{m-1}}^2)
\bigl(\|\nabla\phi_0\|_{L^2}^2+\|\rho_0-1\|_{H^m}^2\bigr).
\end{align*}
Thus, by \eqref{exp:EP1} and \eqref{2.1700}, we obtain
\begin{equation*}
\begin{aligned}
&\quad \sup_{t'\in\mathbb{R}_+}e^{\lambda_1 t'}\|Z^\eps(t')\|_{H^{m-1}}^2
+\frac{1}{\eps^2}\int_0^{\infty} e^{\lambda_1 t'} \|Z^\eps(t')\|_{H^{m-1}}^2\,dt'
\le C E_0^2.
\end{aligned}
\end{equation*}
Consequently, one can carry out similar calculations as in Lemma \ref{lemma4555} to infer
\begin{equation*}
\begin{aligned}
&\quad \sup_{t'\in \mathbb{R}_+} e^{\lambda_1 t'}
\|(\rho^\eps-\rho^*)(t')\|_{H^{m-1}\cap\dot{H}^{-1}}^2
+\int_0^{\infty}e^{\lambda_1 t'}
\|(\rho^\eps-\rho^*)(t')\|_{H^{m}\cap\dot{H}^{-1}}^2\,dt'\\
&\leq C
\Big(
\frac{1}{\eps^2}\int_0^{\infty} e^{\lambda_1 t'}
\|Z^\eps(t')\|_{H^{m-1}}^2\,dt'+\|q_0\|_{H^m}^2\\
&\quad+(1+\|\rho_0-1\|_{H^{m-1}}^2)(\|\nabla\phi_0\|_{L^2}^2+\|\rho_0-1\|_{H^{m}}^2)
\Big)\eps^2
\leq C E_0^2\eps^2.
\end{aligned}
\end{equation*}
Since $\|\rho^\eps-\rho^*\|_{\dot{H}^{-1}}
\sim \|\nabla(\phi^\eps-\phi^*)\|_{L^2}$, one has
\begin{equation*}
\begin{aligned}
&\quad \sup_{t'\in \mathbb{R}_+} e^{\lambda_1 t'}
\|\nabla(\phi^\eps-\phi^*)(t')\|_{L^2}^2
+\int_0^{\infty}e^{\lambda_1 t'}
\|\nabla(\phi^\eps-\phi^*)(t')\|_{L^2}^2\,dt'
\leq C E_0^2\eps^2.
\end{aligned}
\end{equation*}
Repeating the above argument with the weight $e^{\lambda_1 t}$, and using the weighted bounds for $Z^\eps$, $\rho^\eps-\rho^*$, $q_{L,1}^\eps$ and the exponential estimates for $\rho^\eps,\rho^*$, we obtain
\[
\int_0^\infty e^{\lambda_1 t}
\|(q^\eps-q^*-q_L^\eps)(t)\|_{H^{m-1}}^2\,dt
\le C E_0^2\eps^2.
\]
Consequently, the bound \eqref{error2} follows, which leads to the strong convergence properties \eqref{strong1} and \eqref{strong10} directly. In addition, the convergence of the momentum and Darcy's law in
$L^2(1,\infty;H^{m-1})$ can be obtained modulo the initial layer corrections $q_L^\eps$ and $Z_L^\eps$. Note that there exists a constant $C'$ independent of $\eps$ such that, for $\eps$ sufficiently small,
\[
\begin{aligned}
\int_{1}^{\infty} e^{\lambda_1 t'}\|q_L^\eps(t')\|_{H^{m}}^2\,dt'
&=\frac{1}{\eps^2}\int_{1}^{\infty}
e^{-\left(\frac{2}{\eps^2}-\lambda_1\right)t'}\,dt'\,
\|q_0\|_{H^m}^2=\frac{1}{\eps^2}
\frac{e^{-\left(\frac{2}{\eps^2}-\lambda_1\right)}}{\frac{2}{\eps^2}-\lambda_1}
\|q_0\|_{H^m}^2\leq C'\eps^2\|q_0\|_{H^m}^2 
\end{aligned}
\]
and
\[
\int_{1}^{\infty} e^{\lambda_1 t'}\|Z_L^\eps(t')\|_{H^{m-1}}^2\,dt'
\leq C'\eps^2\bigl(\|q_0\|_{H^{m-1}}^2+\|\nabla p(\rho_0)\|_{H^{m-1}}^2
+\|\rho_0\nabla\phi_0\|_{H^{m-1}}^2\bigr).
\]
Combining these with \eqref{error2}, we justify \eqref{strong2}-\eqref{strong3}, and thus the proof of Theorem \ref{theorem1.3} is complete.

\bigbreak

\bigbreak

\noindent
{\textbf{Acknowledgments:}}  L.-Y. Shou is supported by the National Natural Science Foundation of China (12301275). J. Xu is partially supported by the National Natural Science Foundation of China (12271250). 

\bigbreak
\noindent
{\textbf{Data availability statement:}} Data sharing is not applicable to this article, as no datasets were generated or analyzed during the current study.



\vspace{4mm}

\vspace{4mm}

(Y.-J. Peng)\par\nopagebreak

\textsc{School of Mathematics, Nanjing University of Aeronautics and
Astronautics, Nanjing, 211106, China}

\textsc{Laboratoire de Mathématiques Blaise Pascal,
	Université Clermont Auvergne / CNRS
	63178 Aubière Cedex, France}

Email address: {\texttt{yue-jun.peng@uca.fr}}

\vspace{3ex}

(L.-Y. Shou)\par\nopagebreak

\textsc{School of Mathematical Sciences, Ministry of Education Key Laboratory of NSLSCS, and Key Laboratory of Jiangsu Provincial Universities of FDMTA, Nanjing Normal University, Nanjing 210023, China}

Email address: {\texttt{shoulingyun11@gmail.com}}

\vspace{3ex}

(J. Xu)\par\nopagebreak

\textsc{School of Mathematics, Nanjing University of Aeronautics and
Astronautics, Nanjing, 211106, China}

Email address: {\texttt{jiangxu\_79@nuaa.edu.cn}}


\vspace{5mm}
\begin{thebibliography}{99}
\vspace{3mm}

\bibitem{A1}
G. Al\`i, Global existence of smooth solutions of the $N$-dimensional Euler--Poisson model, \emph{SIAM J. Math. Anal.} \textbf{35} (2003), 389--422.

\bibitem{ABR1}
G. Al\`i, D. Bini, and S. Rionero, Global existence and relaxation limit for smooth solutions to the Euler--Poisson model for semiconductors, \emph{SIAM J. Math. Anal.} \textbf{32} (2000), 572--587.

\bibitem{c1}
F. Chen, \emph{Introduction to Plasma Physics and Controlled Fusion}, Vol. 1, Plenum Press, New York, 1984.

\bibitem{CHWY}
G.-Q. G. Chen, L. He, Y. Wang, and D. Yuan, Global solutions of the compressible Euler--Poisson equations with large initial data of spherical symmetry, \emph{Comm. Pure Appl. Math.} \textbf{77} (2024), no. 6, 2947--3025.


\bibitem{1}
J.-F. Coulombel and T. Goudon, The strong relaxation limit of the multidimensional isothermal Euler equations, \emph{Trans. Amer. Math. Soc.} \textbf{359} (2007), 637--648.

\bibitem{c3}
T. Crin-Barat and R. Danchin, Global existence for partially dissipative hyperbolic systems in the $L^p$ framework, and relaxation limit, \emph{Math. Ann.} \textbf{386} (2023), 2159--2206.

\bibitem{CBPS}
T. Crin-Barat, Y.-J. Peng, and L.-Y. Shou, Global convergence rates in the relaxation limits for the compressible Euler and Euler--Maxwell systems in Sobolev spaces, \emph{J. Differential Equations} \textbf{453} (2026), Paper No. 113805, 63 pp.

\bibitem{CBPSX}
T. Crin-Barat, Y.-J. Peng, L.-Y. Shou, and J. Xu, A new characterization of the dissipation structure and the relaxation limit for the compressible Euler--Maxwell system, \emph{J. Funct. Anal.} \textbf{289} (2025), Paper No. 110918, 51 pp.

\bibitem{Dafermos1}
C. M. Dafermos, \emph{Hyperbolic Conservation Laws in Continuum Physics}, 3rd ed., Grundlehren Math. Wiss., vol. 325, Springer, Heidelberg, 2010.


\bibitem{GMP1}
P. Germain, N. Masmoudi, and B. Pausader, Nonneutral global solutions for the electron Euler--Poisson system in three dimensions, \emph{SIAM J. Math. Anal.} \textbf{45} (2013), 267--278.


\bibitem{Guo1}
Y. Guo, Smooth irrotational flows in the large to the Euler--Poisson system in $\mathbb{R}^{3+1}$, \emph{Comm. Math. Phys.} \textbf{195} (1998), 249--265.

\bibitem{Guo2}
Y. Guo and B. Pausader, Global smooth ion dynamics in the Euler--Poisson system, \emph{Comm. Math. Phys.} \textbf{303} (2011), 89--125.

\bibitem{guoblow}
Y. Guo and A. S. Tahvildar-Zadeh, Formation of singularities in relativistic fluid dynamics and in spherically symmetric plasma dynamics, in: \emph{Nonlinear Partial Differential Equations} (Evanston, IL, 1998), Contemp. Math., vol. 238, Amer. Math. Soc., Providence, RI, 1999, pp. 151--161.

\bibitem{HP}
M.-L. Hajjej and Y.-J. Peng, Initial layers and zero-relaxation limits of multidimensional Euler--Poisson equations, \emph{Math. Methods Appl. Sci.} \textbf{36} (2013), 182--195.

\bibitem{HMW}
L. Hsiao, P. A. Markowich, and S. Wang, The asymptotic behavior of globally smooth solutions of the multidimensional isentropic hydrodynamic model for semiconductors, \emph{J. Differential Equations} \textbf{192} (2003), 111--133.

\bibitem{IP1}
A. D. Ionescu and B. Pausader, The Euler--Poisson system in 2D: global stability of the constant equilibrium solution, \emph{Int. Math. Res. Not. IMRN} (2013), no. 4, 761--826.

\bibitem{J}
J. D. Jackson, \emph{Classical Electrodynamics}, 2nd ed., John Wiley, New York, 1975.

\bibitem{5}
S. Junca and M. Rascle, Strong relaxation of the isothermal Euler system to the heat equation, \emph{Z. Angew. Math. Phys.} \textbf{53} (2002), 239--264.

\bibitem{JP1}
A. J\"ungel and Y.-J. Peng, A hierarchy of hydrodynamic models for plasmas: zero-relaxation-time limits, \emph{Comm. Partial Differential Equations} \textbf{24} (1999), 1007--1033.

\bibitem{JP2}
A. J\"ungel and Y.-J. Peng, Zero-relaxation-time limits in hydrodynamic models for plasmas revisited, \emph{Z. Angew. Math. Phys.} \textbf{51} (2000), 385--396.

\bibitem{KatoPonce1988}
T. Kato and G. Ponce, Commutator estimates and the Euler and Navier--Stokes equations, \emph{Comm. Pure Appl. Math.} \textbf{41} (1988), no. 7, 891--907.

\bibitem{Lady}
O. A. Ladyzhenskaya, V. A. Solonnikov, and N. N. Ural'tseva, \emph{Linear and Quasilinear Equations of Parabolic Type}, Transl. Math. Monogr., vol. 23, Amer. Math. Soc., Providence, RI, 1968.


\bibitem{LM1}
C. Lattanzio and P. Marcati, The relaxation to the drift--diffusion system for the 3-D isentropic Euler--Poisson model for semiconductors, \emph{Discrete Contin. Dyn. Syst.} \textbf{5} (1999), 449--455.

\bibitem{LT1}
C. Lattanzio and A. E. Tzavaras, From gas dynamics with large friction to gradient flows describing diffusion theories, \emph{Comm. Partial Differential Equations} \textbf{42} (2017), no. 2, 261--290.

\bibitem{7}
P. D. Lax, Hyperbolic systems of conservation laws II, \emph{Comm. Pure Appl. Math.} \textbf{10} (1957), 537--566.


\bibitem{LW1}
D. Li and Y. Wu, The Cauchy problem for the two-dimensional Euler--Poisson system, \emph{J. Eur. Math. Soc.} \textbf{16} (2014), 2211--2266.

\bibitem{LMM}
H.-L. Li, P. A. Markowich, and M. Mei, Asymptotic behaviour of solutions of the hydrodynamic model of semiconductors, \emph{Proc. Roy. Soc. Edinburgh Sect. A} \textbf{132} (2002), no. 2, 359--378.

\bibitem{9}
Y. Li, Y.-J. Peng, and L. Zhao, Convergence rate from hyperbolic systems of balance laws to parabolic systems, \emph{Appl. Anal.} \textbf{100} (2021), no. 5, 1079--1095.

\bibitem{LPZ1}
Y. Li, Y.-J. Peng, and L. Zhao, Convergence rates in zero-relaxation limits for Euler--Maxwell and Euler--Poisson systems, \emph{J. Math. Pures Appl. (9)} \textbf{154} (2021), 185--211.

\bibitem{10}
C. Lin and J.-F. Coulombel, The strong relaxation limit of the multidimensional Euler equations, \emph{NoDEA Nonlinear Differential Equations Appl.} \textbf{20} (2013), 447--461.

\bibitem{liu1}
T.-P. Liu, Hyperbolic conservation laws with relaxation, \emph{Comm. Math. Phys.} \textbf{108} (1987), no. 1, 153--175.


\bibitem{11}
A. Majda, \emph{Compressible Fluid Flow and Systems of Conservation Laws in Several Space Variables}, Springer, New York, 1984.

\bibitem{MP}
T. Makino and B. Perthame, Sur les solutions \`a sym\'etrie sph\'erique de l'\'equation d'Euler--Poisson pour l'\'evolution d'\'etoiles gazeuses, \emph{Japan J. Appl. Math.} \textbf{7} (1990), 165--170.

\bibitem{12}
P. Marcati and A. Milani, The one-dimensional Darcy's law as the limit of a compressible Euler flow, \emph{J. Differential Equations} \textbf{84} (1990), 129--147.

\bibitem{MN}
P. Marcati and R. Natalini, Weak solutions to a hydrodynamic model for semiconductors and relaxation to the drift--diffusion equations, \emph{Arch. Ration. Mech. Anal.} \textbf{129} (1995), 129--145.

\bibitem{M1}
P. Marcati and B. Rubino, Hyperbolic to parabolic relaxation theory for quasilinear first order systems, \emph{J. Differential Equations} \textbf{162} (2000), 359--399.

\bibitem{MR1}
P. A. Markowich, C. A. Ringhofer, and C. Schmeiser, \emph{Semiconductor Equations}, Springer-Verlag, New York, 1990.

\bibitem{N1}
R. Natalini, Recent results on hyperbolic relaxation problems, in: \emph{Analysis of Systems of Conservation Laws} (Aachen, 1997), Monogr. Surv. Pure Appl. Math., vol. 99, Chapman \& Hall/CRC, Boca Raton, FL, 1999, pp. 128--198.

\bibitem{Ni68} T. Nishida,
Global solution for an initial boundary value problem of a quasilinear
hyperbolic system,
\emph{Proc. Japan Acad.} \textbf{44} (1968), 642--646.

\bibitem{NS} T. Nishida, J. Smoller, Solutions in the large for some nonlinear hyperbolic conservation laws, \emph{Comm. Pure Appl. Math.} \textbf{26} (1973), 183--200.

\bibitem{PEP1}
Y.-J. Peng, Uniformly global smooth solutions and convergence of Euler--Poisson systems with small parameters, \emph{SIAM J. Math. Anal.} \textbf{47} (2015), no. 2, 1355--1376.

\bibitem{pengJFA}
Y.-J. Peng, Global large smooth solutions for isothermal Euler equations with damping and small parameter, \emph{J. Funct. Anal.} \textbf{287} (2024), Paper No. 110571.

\bibitem{13}
Y.-J. Peng and V. Wasiolek, Uniform global existence and parabolic limit for partially dissipative hyperbolic systems, \emph{J. Differential Equations} \textbf{260} (2016), no. 9, 7059--7092.

\bibitem{P}
B. Perthame, Non-existence of global solutions to Euler--Poisson equations for repulsive forces, \emph{Japan J. Appl. Math.} \textbf{7} (1990), 363--367.

\bibitem{prv}
F. Poupaud, M. Rascle, and J.-P. Vila, Global solutions to the isothermal Euler--Poisson system with arbitrarily large data, \emph{J. Differential Equations} \textbf{123} (1995), no. 1, 93--121.


\bibitem{SK} Y. Shizuta and S. Kawashima, Systems of equations of hyperbolic-parabolic type with applications to the discrete Boltzmann equation, \emph{Hokkaido Math. J.} \textbf{14} (1985), 249--275.

\bibitem{16}
T. C. Sideris, Formation of singularities in three-dimensional compressible fluids, \emph{Comm. Math. Phys.} \textbf{101} (1985), 475--485.


\bibitem{stein}
E. M. Stein, \emph{Singular Integrals and Differentiability Properties of Functions}, Princeton Mathematical Series, Vol. 30, Princeton University Press, Princeton, 1970.



\bibitem{TW1}
E. Tadmor and D. Wei, On the global regularity of sub-critical Euler--Poisson equations with pressure, \emph{J. Eur. Math. Soc.} \textbf{10} (2008), 757--769.

\bibitem{Taylor1996}
M. E. Taylor, \emph{Partial Differential Equations III: Nonlinear Equations}, Appl. Math. Sci., vol. 117, Springer, New York, 1996.

\bibitem{WTB}
D. Wei, E. Tadmor, and H. Bae, Critical thresholds in multi-dimensional Euler--Poisson equations with radial symmetry, \emph{Commun. Math. Sci.} \textbf{10} (2012), no. 1, 75--86.

\bibitem{W1}
G. B. Whitham, \emph{Linear and Nonlinear Waves}, Wiley, New York, 1974.

\bibitem{XuSIAM} J. Xu, Relaxation-time limit in the isothermal hydrodynamic model for semiconductors, \emph{SIAM J. Math. Anal.} (2009), \textbf{40}, no. 5, 1979-1991.

\bibitem{XuWang}
J. Xu and Z. Wang, Relaxation limit in Besov spaces for compressible Euler equations, \emph{J. Math. Pures Appl. (9)} \textbf{99} (2013), 43--61.

\bibitem{Y1}
W.-A. Yong, Diffusive relaxation limit of multidimensional isentropic hydrodynamical models for semiconductors, \emph{SIAM J. Appl. Math.} \textbf{64} (2004), no. 5, 1737--1748.

\end{thebibliography}
\end{document}